\newtheorem{thm}{Theorem} [section]
\theoremstyle{definition}
\newtheorem{Def}[thm]{Definition}%[section]
\newtheorem{example}[thm]{Example}%[section]
\newtheorem{rem}[thm]{Remark}%[section]
\theoremstyle{plain}
\newtheorem{prop}[thm]{Proposition}
\newtheorem{lem}[thm]{Lemma}
\newtheorem{cor}[thm]{Corollary}
\newtheorem{conj}[thm]{Conjecture}
\numberwithin{equation}{section}
\newcommand{\Hom}{\mathrm{Hom}}
\newcommand{\mrm}{\mathrm}
\newcommand{\C}{\mathbb C}
\newcommand{\End}{\mrm{End}}
\newcommand{\g}{\mathfrak{g}}
\newcommand{\KK}{\mathbb K}
\newcommand{\Z}{\mathbb Z}
\newcommand{\extp}{\@ifnextchar^\@extp{\@extp^{\,}}}
\def\@extp^#1{\mathop{\bigwedge\nolimits^{\!#1}}}
\DeclareMathOperator*{\Rep}{Rep}
\DeclareMathOperator*{\Vecc}{Vec}
\DeclareMathOperator*{\sVec}{sVec}
\DeclareMathOperator{\Ver}{Ver}
\DeclareMathOperator*{\tr}{tr}
\title[Lectures on Symmetric Tensor Categories]{Lectures on Symmetric Tensor Categories}
\author[Pavel Etingof]{Pavel Etingof}
\address{Department of Mathematics, Massachusetts Institute of Technology, Cambridge, MA 02139} \email{etingof@math.mit.edu}
\author[Arun S. Kannan]{Arun S. Kannan}
\address{Department of Mathematics, Massachusetts Institute of Technology, Cambridge, MA 02139} \email{akannan@mit.edu}
\begin{document}

\begin{abstract} This is an expanded version of the notes by the second author of the lectures on symmetric tensor categories given by the first author at Ohio State University in March 2019 and later at ICRA-2020 in November 2020. We review some aspects of the current state of the theory of symmetric  tensor categories and discuss their applications. 
\end{abstract}

\maketitle

\setcounter{tocdepth}{1}
\tableofcontents

%%%%%%%%%%
\section{Introduction}

A modern view of representation theory is that it is a study not just of individual representations (say, finite dimensional representations of an affine group or, more generally, supergroup scheme $G$ over an algebraically closed field $\KK$) but also of the {\bf category} ${\rm Rep}(G)$ they form. The properties of ${\rm Rep}(G)$ can be summarized by saying that it is a {\bf symmetric tensor category} (shortly, STC) which uniquely determines $G$. A STC is a natural home for studying any kind of linear-algebraic structures (commutative algebras, Lie algebras, Hopf algebras, modules over them, etc.); for instance, doing so in ${\rm Rep}(G)$ amounts to studying such structures with a $G$-symmetry. It is therefore natural to ask: does the study of STC reduce to (super)group representation theory, or is it more general? In other words, do there exist STCs other than ${\rm Rep}(G)$ (or, more precisely, its slight generalization $\Rep(G,z)$)\footnote{Here $z\in G(\KK)$ is such that $z^2=1$ and $z$ acts on the function algebra $\mathcal O(G)$ by parity. The category $\Rep(G,z)$ consists of representations of $G$ on supervector spaces on which $z$ acts by parity.}? If so, this would be interesting, since algebra in such STCs would be a new kind of algebra, one ``without vector spaces".

Luckily, the answer turns out to be ``yes". In Lecture 1, after reviewing the general theory of STC, we will discuss such examples in characteristic zero (so-called {\bf Deligne categories}), and also {\bf Deligne's theorem} (\cite{D1}), which puts restrictions on the kind of examples one can have.

Examples of symmetric tensor categories over complex numbers which are not representation categories of supergroups were first given by P. Deligne and J. S. Milne in 1981 (\cite{DM}, Examples 1.26, 1.27). These very interesting categories are interpolations of representation categories of classical groups $GL_n$, $O_n$, $Sp_n$ to arbitrary complex values of $n$. P. Deligne later generalized them to symmetric groups $S_n$ (\cite{D2}) and also to characteristic $p$ (\cite{D3}, see also \cite{H}, 3.3), where, somewhat unexpectedly, one needs to interpolate $n$ to $p$-adic integer values rather than elements of the ground field. These categories are now known under the umbrella name ``Deligne categories". In Lecture 2 we will review
the structure of Deligne categories $\Rep GL_t$ interpolating the classical representation categories $\Rep GL_n(\Bbb C)$ and discuss their alternative construction using ultrafilters.
This construction allows one to generalize these categories to characteristic $p$.

Deligne categories discussed in Lecture 2 violate an obvious necessary condition for a STC to have any realization by finite dimensional vector spaces (and in particular to be of the form ${\rm Rep}(G,z)$): for each object $X$ the length of the $n$-th tensor power $X^{\otimes n}$ of $X$ grows at most exponentially with $n$. We call this property {\bf moderate growth}. So it is natural to ask if there exist STCs of moderate growth other than ${\rm Rep}(G,z)$. In characteristic zero, the negative answer is given by Deligne's theorem, discussed in Lecture 1. Namely Deligne's theorem says that a STC of moderate growth can always be realized in {\bf supervector spaces.} However, in characteristic $p$ the situation is much more interesting. Namely, Deligne's theorem is known to fail in any characteristic $p>0$. The simplest exotic symmetric tensor category of moderate growth (i.e., not of the form ${\rm Rep}(G)$) for $p>3$ is the semisimplification of the category of representations of $\Bbb Z/p$, called the {\bf Verlinde category} ${\rm Ver}_p$, which was first considered in \cite{GK,GM}. For example, for $p=5$, this category has an object $X$ such that $X^{\otimes 2}=\mathbbm{1}\oplus X$, so $X$ cannot be realized by a vector space (as its dimension would have to equal $\frac{1\pm \sqrt{5}}{2}$). In Lecture 3 we will discuss the notion of semisimplification, the Verlinde category, Ostrik's generalization of Deligne's theorem for fusion categories in characteristic $p$ (\cite{O}) and recent further extension to semisimple (and, more generally, Frobenius exact) symmetric tensor categories (\cite{CEO}). We will also discuss the Verlinde categories for prime powers, ${\rm Ver}_{p^n}$, constructed in \cite{BE, BEO} and a conjectural generalization of Deligne's theorem to general symmetric tensor categories of moderate growth in characteristic $p$ which involves $\Ver_{p^n}$. In the Appendix we discuss some applications of these techniques to modular representation theory which have not been considered previously, and study dimensions in ribbon categories in positive characteristic.  

{\bf Acknowledgements.} P.E. is grateful to K. Coulembier, Dave Benson, Victor Ostrik for useful discussions. P. E.'s work was partially supported by the NSF grant DMS - 1916120. These lecture notes are based upon work supported by The National Science Foundation Graduate Research Fellowship Program under Grant No. 1842490 awarded to the second author.

\section{Lecture 1: Symmetric tensor categories and Deligne's theorem}

The original goal of representation theory (going back to the works of Frobenius, Schur, Weyl, Brauer and others) was constructing and understanding representations of various groups and Lie algebras. In modern representation theory, however, the perspective has shifted to look at collections of representations and the morphisms between them simultaneously, rather than focus on individual representations. Hence, the fundamental object of study is the {\bf category of representations} of a group. In order to generalize, the most important structures and properties of this category are identified to define something known as a {\bf symmetric tensor category}. In the next subsection, we will build up to the definition of a symmetric tensor category by isolating these key structures and properties, using the category of representations of a group as the prototypical example. For more details on tensor categories, in particular symmetric ones, we refer the reader to \cite{EGNO}.

\subsection{Symmetric tensor categories}
Let $G$ be a group and $\KK$ be an algebraically closed field of any characteristic. Let $\Rep_\KK (G)$ be the category of finite dimensional representations of $G$ over $\KK$. The morphisms between two $G$-modules $V,W$ are the usual $G$-module homomorphisms: $\Hom_{\Rep_\KK(G)}(V, W) = \{\phi:V \rightarrow W \ : \phi \circ g = g \circ \phi \ \ \forall g \in G \}$. It is natural to ask: {\bf what structures and properties does this category have?}

\subsubsection{Additive structure}
Some things we notice immediately are:

\begin{enumerate}
  \item[$\bullet$]  $\Rep_\KK (G)$ is {\bf $\KK$-linear}, meaning that the $\Hom$-sets are  $\KK$-vector spaces and the composition of morphisms is bilinear.
  \item[$\bullet$]  $\Rep_\KK (G)$ is an {\bf abelian category}. While the technical definition is rather long, the gist is that this category has finite direct sums, kernels, cokernels, and images of morphisms. One interpretation comes via the Freyd-Mitchell theorem, which states states that any abelian category can be realized (albeit non-uniquely) as a full subcategory of the category of modules over some ring $R$; in our case, $R$ can be taken to be the group algebra of $G$ over $\KK$.
  \item[$\bullet$]  $\Rep_\KK (G)$ is {\bf artinian} (or {\bf locally finite}), which means that objects have finite length (i.e. a finite Jordan-Holder or composition series) and $\Hom$ spaces are finite dimensional over $\KK$.
\end{enumerate}
These encapsulate the additive structure of the category $\Rep_\KK(G)$.

Often one needs to consider additive categories which are not artinian or even abelian, but only {\bf Karoubian}, which means that they have images of idempotents and finite direct sums. An object $X\ne 0$ of such a category is {\bf indecomposable} if it is not a direct sum of two nonzero objects. The {\bf Krull-Schmidt theorem} guarantees that in a Karoubian category with finite dimensional Hom spaces, any object has a unique (up to an isomorphism) decomposition into a direct sum of indecomposables.

\begin{rem} An object $X$ of an abelian category $\mathcal C$ is {\bf simple}
if it has no subobjects other than $0$ and $X$.  The category $\mathcal C$ is called {\bf semisimple} if every indecomposable object is simple (i.e., every object is semisimple, that is, isomorphic to a direct sum of simple objects). If such ${\mathcal C}$ is linear over an algebraically closed field $\KK$ and has finite dimensional Hom spaces then for indecomposable objects $X,Y$, $\Hom(X,Y)=0$ if $X\ncong Y$, and $\Hom(X,X)=\KK$, a property often called ``Schur's lemma".

In ${\rm Rep}_\KK(G)$, simple objects are irreducible representations and indecomposable objects are indecomposable representations. So if $G$ is a finite group and ${\rm char}(\KK)$ does not divide $|G|$ then $\Rep_\KK(G)$ is semisimple. In this case,
``Schur's lemma" is the classical Schur lemma in representation theory.
\end{rem}

\subsubsection{Monoidal structure}
The category ${\rm Rep}_\KK(G)$ also has a {\bf monoidal structure}, which encodes the tensor product of representations.

\begin{Def}[Monoidal Structure]
Let $\mathcal{C}$ be a category. A monoidal structure on $\mathcal{C}$ is a triple $(\otimes, a, \mathbbm{1})$, where $\otimes$ is a bifunctor $\otimes: \mathcal{C} \times \mathcal{C} \rightarrow \mathcal{C}$ called {\bf the tensor product}, $a$ is a natural isomorphism of functors $a: (- \otimes -) \otimes - \rightarrow - \otimes (- \otimes -)$ from $\mathcal{C} \times \mathcal{C} \times \mathcal{C}$ to $\mathcal{C}$ called the {\bf associativity constraint} or {\bf associativity isomorphism}, and $\mathbbm{1}$ is an object of $\mathcal{C}$ called the {\bf unit object}, which satisfy the following axioms.

\begin{enumerate}
  \item \textbf{The pentagon axiom}. For all $W, X, Y, Z \in \mathcal{C}$, the following diagram commutes:

  \[
    \begin{tikzpicture}[commutative diagrams/every diagram]
      \node(P0)at (90 + 0*72:4.1cm){$((W\otimes X)\otimes Y)\otimes Z$};
      \node(P1)at (90 + 1*72:4cm){$(W\otimes (X\otimes Y))\otimes Z$};
      \node(P2)at (90 + 2*72:3cm){\makebox[5ex][r]{$W\otimes ((X\otimes Y)\otimes Z)$}};
      \node(P3)at (90 + 3*72:3cm){\makebox[5ex][l]{$W\otimes (X\otimes (Y\otimes Z))$}};
      \node(P4)at (90 + 4*72:4cm){$(W\otimes X)\otimes(Y\otimes Z)$};

      \path[commutative diagrams/.cd,every arrow,every label](P0)edgenode[swap] {  $a_{W,X,Y} \otimes 1_Z$} (P1)(P1)edgenode[swap] {  $a_{W, X\otimes Y, Z}$} (P2)(P2)edgenode{$1_W \otimes a_{X, Y, Z}$} (P3)(P4)edgenode{$a_{W,X,Y\otimes Z}$} (P3)(P0)edgenode{$a_{W\otimes X, Y, Z}$} (P4);
    \end{tikzpicture}
  \]

  \item \textbf{The unit axiom}. There is an isomorphism $\iota: \mathbbm 1\otimes \mathbbm 1\to \mathbbm 1$, and the functors \linebreak $R_{\mathbbm{1}}: X \mapsto X\otimes \mathbbm{1}$ and $L_{\mathbbm{1}}: X \mapsto \mathbbm{1} \otimes X$ are autoequivalences of $\mathcal{C}$.
\end{enumerate}
\end{Def}

\begin{rem} Given $(\otimes,a)$ satisfying the associativity axiom, the object $\mathbbm{1}$ and isomorphism $\iota$, if they exist, are uniquely determined up to a unique isomorphism, so the unit object is a property and not a structure.
\end{rem}

A category endowed with a monoidal structure is called a {\bf monoidal category}. This categorifies the notion of a monoid, in the sense that the set of isomorphism classes of an (essentially small) monoidal category is a monoid under $\otimes$ with unit $\mathbbm{1}$; namely, the existence of $a$ implies the associativity of multiplication (i.e., that we can ignore parentheses in a product). On the other hand, the pentagon axiom for $\otimes$ is a higher coherence property that we don't see for usual monoids. It insures that we can ignore parentheses even at the level of objects themselves, not just their isomorphism classes: any two ways to pass from one parenthesization of a tensor product $X_1\otimes...\otimes X_n$ to another give the same isomorphism (this is called the {\bf Mac Lane coherence theorem}).

In our example of $\Rep_\KK (G)$, the $\otimes$ is the usual tensor product of representations with the usual associativity isomorphism, and $\mathbbm{1}$ is given by the one-dimensional trivial representation of $G$.

\subsubsection{Symmetric structure}\label{brai}
We also observe that $\Rep_\KK(G)$ has a certain symmetry structure. Namely, for any two objects $V, W \in \Rep_\KK(G)$, we have an isomorphism $V \otimes W \cong W \otimes V$ given by the usual swap, and it commutes with morphisms of representations. Hence, categorically speaking, we have a functorial isomorphism $c_{V,W} : V \otimes W \rightarrow W \otimes V$. This motivates the following definitions.

\begin{Def}[Braided Monoidal Category] A {\bf braided monoidal category} is a monoidal category $\mathcal{C}$ endowed with a natural isomorphism of functors $c : (- \otimes -) \rightarrow  (- \otimes^{\rm op} - )$ from $\mathcal{C} \times \mathcal{C}$ to $\mathcal{C}$ called a {\bf braiding} (where op denotes the opposite tensor product, i.e., $X\otimes^{\rm op}Y=Y\otimes X$) such that the following hexagon diagrams commute for any objects $X, Y, Z \in \mathcal{C}$:

\[
  \begin{tikzcd}[column sep={2cm,between origins}, row sep={1.732050808cm,between origins}]
      & X \otimes (Y \otimes Z) \arrow[rr, "c_{X, Y \otimes Z}"]  && (Y \otimes Z) \otimes X \arrow[rd, "a_{Y,Z,X}"] &  \\
      (X \otimes Y) \otimes Z \arrow[ru, "a_{X, Y, Z}"', swap] \arrow[rd, "c_{X, Y} \otimes 1_Z"']&  &&  & Y\otimes (Z \otimes X) \\
      & (Y \otimes X) \otimes Z \arrow[rr, "a_{Y,X,Z}"'] && Y \otimes (X \otimes Z) \arrow[ru, "1_Y \otimes c_{X,Z}"'] &
  \end{tikzcd}
\]

\[
\begin{tikzcd}[column sep={2cm,between origins}, row sep={1.732050808cm,between origins}]
    & (X \otimes Y) \otimes Z \arrow[rr, "c_{X\otimes Y, Z}"]  && Z \otimes (X \otimes Y) \arrow[rd, "a_{Z,X,Y}^{-1}"] &  \\
    X \otimes (Y \otimes Z) \arrow[ru, "a_{X, Y, Z}^{-1}"', swap] \arrow[rd, "1_X \otimes c_{Y, Z}"']&  &&  & (Z\otimes X) \otimes Y \\
    & X \otimes (Z \otimes Y) \arrow[rr, "a_{X,Z,Y}^{-1}"'] && (X \otimes Z) \otimes Y \arrow[ru, "c_{X,Z} \otimes 1_Y"'] &
\end{tikzcd}
\]

\end{Def}

In a braided monoidal category, for any object $X$ we have a natural action of the braid group $B_n$ on $X^{\otimes n}$ given by $b_i\mapsto c_{i,i+1}$, where $b_i$ are the standard generators of $B_n$. This explains the terminology.

The notion of a braided monoidal category categorifies the notion of a commutative monoid, formed by isomorphism classes of objects of such a category; for this reason the braiding is also known as the {\bf commutativity constraint}.

It is clear that the category ${\rm Rep}_\KK(G)$ is a braided monoidal category, with braiding given by the swap $V\otimes W\to W\otimes V$. However, in the categorical world (unlike the usual world of sets) there is an even stronger version of commutativity, still enjoyed by this category.

\begin{Def}[Symmetric Monoidal Category] A braided monoidal category with braiding $c$ is a {\bf symmetric monoidal category} if for all objects $X, Y \in \mathcal{C}$, \[c_{Y, X} \circ c_{X, Y} = 1_{X \otimes Y}.\]
In this case, we call the braiding {\bf symmetric}.
\end{Def}
If $X$ is an object in a symmetric monoidal category, the above action of the braid group $B_n$ on $X^{\otimes n}$ factors through the symmetric group $S_n$, as we have $b_i^2=1$. This explains the terminology.

We see that $\Rep_{\KK}(G)$ is a symmetric monoidal category.

\subsubsection{Rigid categories}

The category $\Rep_\KK(G)$ also has {\bf dual objects}, namely if $V$ is a $G$-module, then the dual space $V^*$ is a $G$-module with action $(gf)(v) = f(g^{-1}v)$ for all $g \in G, v \in V, f \in V^*$.
This means that the symmetric monoidal category ${\rm Rep}_\KK(G)$ satisfies the property called {\bf rigidity}.

\begin{Def}[Rigidity]
Let $(\mathcal{C}, \otimes, a, \mathbbm{1}, c)$ be a symmetric monoidal category, and let $X \in \mathcal{C}$ be an object. An object $X^* \in \mathcal{C}$ is called the {\bf dual} of $X$ if there exist morphisms $ev_X: X^* \otimes X \rightarrow \mathbbm{1}$ and $coev_X: \mathbbm{1} \rightarrow X \otimes X^*$, called the {\bf evaluation and coevaluation morphisms}, such that the following two compositions are the identity morphisms:

\begin{align*}
  X \xrightarrow{coev_X \otimes 1_X} (X\otimes X^*) \otimes X \xrightarrow{a_{X, X^*, X}} X \otimes (X^* \otimes X) \xrightarrow{1_X \otimes ev_X} X \\
  X^* \xrightarrow{1_{X^*} \otimes coev_X} X^* \otimes (X\otimes X^*) \xrightarrow{a^{-1}_{X^*, X, X^*}} (X^* \otimes X) \otimes X^* \xrightarrow{ev_X \otimes 1_{X^*}} X^*
\end{align*}
where the unit isomorphisms like that between $X$ and $X \otimes \mathbbm{1}$ are suppressed. An object that admits a dual is called {\bf rigid}. The category $\mathcal{C}$ is called rigid if all its objects are rigid.
\end{Def}

\begin{rem} The dual object $X^*$ equipped with the evaluation and coevaluation morphisms, if exists, is unique up to a unique isomorphism, which shows that rigidity of $X$ (and thus of $\mathcal C$) is a property and not a structure. Thus we have a contravariant functor $X\mapsto X^*$. Also, we have a functorial isomorphism $X\cong X^{**}$ induced by the symmetric braiding $c$ (i.e., $X$ has a natural structure of a dual object to $X^*$).
\end{rem}

\begin{rem} The notions of a rigid object and a rigid category can be defined for arbitrary (not necessarily braided) monoidal categories, in which case one should distinguish between {\bf left dual} $X^*$ and {\bf right dual} ${}^*X$, which are not always isomorphic, and consequently one does not have $X\cong X^{**}$  in general. However, since these lectures are about symmetric categories, we will not discuss this here and refer the reader to \cite{EGNO}, Subsection 2.10.
\end{rem}

\subsubsection{Symmetric tensor categories}
We note two final properties of $\Rep_{\KK}(G)$, namely

$\bullet$ the {\bf distributivity property}: $\otimes$ of morphisms is {\bf bilinear}, and

$\bullet$ $\End_{\Rep_\KK(G)}(\mathbbm{1}) = \KK$ (i.e. $\mathbbm{1}$ is indecomposable or, equivalently, simple).

The properties we have thus far observed are the basis for the definition of a {\bf symmetric tensor category}:

\begin{Def}[Symmetric Tensor Category]
A $\KK$-linear artinian\footnote{Recall that a $\KK$-linear category is artinian if it is abelian with objects of finite length and finite dimensional Hom spaces.} rigid symmetric monoidal category $(\mathcal{C}, \otimes, a, \mathbbm{1}, c)$ such that $\otimes$ is bilinear on morphisms\footnote{In such a category the tensor product is biexact, see \cite{EGNO}, Subsection 4.2.} and $\End_\mathcal{C} (\mathbbm{1}) = \KK$ is called a {\bf symmetric tensor category} over $\KK$.\footnote{The requirement that $\End_\mathcal{C} (\mathbbm{1}) = \KK$ is not very restrictive. Without this assumption, $\mathcal{C}$ will just be a direct sum of finitely many symmetric tensor categories over $\KK$.}
\end{Def}

We thus have

\begin{prop} ${\rm Rep}_\KK(G)$ is a symmetric tensor category over $\KK$.
\end{prop}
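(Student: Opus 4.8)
The plan is to verify, clause by clause, that $\mathcal C = \Rep_\KK(G)$ satisfies the definition of a symmetric tensor category, using throughout the faithful forgetful functor $F\colon \Rep_\KK(G)\to\Vecc_\KK$ to the category of finite dimensional $\KK$-vector spaces. Every structure morphism on $\mathcal C$ will be one borrowed from $\Vecc_\KK$, so the only genuinely new thing to check is that it commutes with the $G$-action; and every coherence condition --- the pentagon axiom, the unit axiom, the hexagon axioms, and the zig-zag identities for duals --- will hold on $\mathcal C$ simply because it holds in $\Vecc_\KK$ and $F$ is faithful (two parallel $G$-maps that agree after forgetting the $G$-action are equal). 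First I would record the additive data: $\Hom_{\Rep_\KK(G)}(V,W)$ is the $\KK$-subspace of $\Hom_\KK(V,W)$ cut out by the equivariance conditions, hence a $\KK$-vector space, finite dimensional because $V$ and $W$ are; composition is bilinear since it is so in $\Vecc_\KK$. The category is abelian because kernels, cokernels and images of $G$-morphisms, formed in $\Vecc_\KK$, are again $G$-stable subquotients, and it is artinian because a finite dimensional module has a composition series of length at most its dimension. Bilinearity of $\otimes$ on morphisms is again inherited from $\Vecc_\KK$, and $\End_{\Rep_\KK(G)}(\mathbbm 1)$ consists of the $G$-equivariant endomorphisms of the one dimensional trivial module, i.e.\ of scalars, so it equals $\KK$.

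Next I would install the symmetric monoidal structure. Take $\otimes$ to be $\otimes_\KK$ with the diagonal action $g(v\otimes w)=gv\otimes gw$; $\mathbbm 1=\KK$ with trivial action, with $\iota\colon\mathbbm 1\otimes\mathbbm 1\to\mathbbm 1$ multiplication; and $a$ and the braiding $c_{V,W}(v\otimes w)=w\otimes v$ the standard ones on $\Vecc_\KK$. Each of these is visibly $G$-equivariant (since $G$ acts diagonally, resp.\ trivially on $\mathbbm 1$) and natural, hence descends to a natural transformation on $\mathcal C$; the functors $X\mapsto X\otimes\mathbbm 1$ and $X\mapsto\mathbbm 1\otimes X$ are autoequivalences of $\mathcal C$ since they are naturally isomorphic to the identity functor. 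The pentagon and the two hexagons then follow from their validity in $\Vecc_\KK$ by faithfulness of $F$, and the symmetry identity $c_{W,V}\circ c_{V,W}=1_{V\otimes W}$ is immediate because applying the swap twice is the identity. This makes $\mathcal C$ a symmetric monoidal category with bilinear tensor product.

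Finally I would check rigidity: given $V$, set $V^*=\Hom_\KK(V,\KK)$ with $(gf)(v)=f(g^{-1}v)$, and define $ev_V\colon V^*\otimes V\to\mathbbm 1$ by $f\otimes v\mapsto f(v)$ and $coev_V\colon\mathbbm 1\to V\otimes V^*$ by $1\mapsto\sum_i e_i\otimes e_i^*$ for dual bases $\{e_i\}$, $\{e_i^*\}$. One checks $ev_V$ is $G$-equivariant since $f(v)=(gf)(gv)$ and $G$ acts trivially on $\mathbbm 1$, and $coev_V$ is $G$-equivariant because $\sum_i e_i\otimes e_i^*$ is the image of $\mathrm{id}_V$ under $V\otimes V^*\cong\End_\KK(V)$ and is therefore fixed by the conjugation action of $G$. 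The two defining compositions for a dual are the classical zig-zag identities for finite dimensional vector spaces, so they again hold in $\mathcal C$ by faithfulness of $F$. Assembling all these facts yields the proposition. I do not anticipate a real obstacle here: the argument is entirely bookkeeping, and the only point worth stating cleanly is that coherence and the rigidity axioms need not be re-derived but merely transported along the faithful, strict monoidal forgetful functor $F$, the $G$-specific content reducing to the routine equivariance checks for $\otimes$, $c$, $ev_V$, and $coev_V$.
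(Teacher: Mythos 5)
Your proof is correct and matches the paper's approach: the paper presents this proposition as a summary of the clause-by-clause observations made while motivating the definition of a symmetric tensor category from the example $\Rep_\KK(G)$, which is exactly the verification you carry out. The only substantive content in either version is the routine check that the structure morphisms inherited from $\Vecc_\KK$ are $G$-equivariant and that coherence transports along the faithful forgetful functor, and you handle both points cleanly.
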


In particular, the simplest example of a symmetric tensor category is obtained when $G=\lbrace 1\rbrace$. In this case ${\rm Rep}_\KK(G)={\rm Vec}_\KK$, the {\bf category of finite dimensional vector spaces} over $\KK$.

\subsubsection{Monoidal functors and equivalences}

\par Monoids form a category where morphisms are homomorphisms of monoids, i.e.,
maps that preserve the multiplication and the unit. Similarly, we can define {\bf monoidal functors}, which are functors between monoidal categories that preserve the monoidal structure and the unit object. More precisely, we have

\begin{Def}[Monoidal Functor]
1.  If $(\mathcal{C}, \otimes, a, \mathbbm{1})$ and $(\mathcal{C}', \otimes', a', \mathbbm{1}')$ are two monoidal categories, a monoidal functor from $\mathcal{C}$ to $\mathcal{C}'$ is a pair $(F, J)$, where $F: \mathcal{C} \rightarrow \mathcal{C}'$ is a functor and $J: F(-)\otimes' F(-) \rightarrow F(-\otimes -)$ is a functorial isomorphism called the {\bf tensor structure} such that $F(\mathbbm{1}) \cong \mathbbm{1}'$ and the following hexagon diagram commutes for any objects $X, Y, Z \in \mathcal{C}$:

  \[
  \begin{tikzcd}[column sep={8cm,between origins}, row sep={1.5cm,between origins}]
      (F(X) \otimes' F(Y)) \otimes' F(Z) \arrow{r}{a_{F(X),F(Y),F(Z)}'} \arrow{d}{J_{X,Y}\otimes'1_{F(Z)}} & F(X) \otimes' (F(Y) \otimes' F(Z)) \arrow{d}{1_{F(X)}\otimes'J_{Y,Z}}  \\
      F(X\otimes Y) \otimes' F(Z) \arrow{d}{J_{X\otimes Y, Z}} & F(X) \otimes' F(Y\otimes Z) \arrow{d}{J_{X, Y\otimes Z}} \\
      F((X\otimes Y) \otimes Z) \arrow{r}{F(a_{X,Y,Z})} & F(X\otimes (Y\otimes Z)) \\
  \end{tikzcd}
  \]

2. If $\mathcal{C}$, $\mathcal{C}'$ are braided with braidings $c,c'$ then a monoidal functor $F: \mathcal C\to \mathcal C'$ is called {\bf braided} if it preserves the braiding, i.e., if for any $X,Y\in \mathcal{C}$ one has
$$
F(c_{X,Y})\circ J_{X,Y}=J_{Y,X}\circ c_{F(X),F(Y)}'.
$$
If moreover the braidings $c,c'$ are symmetric, a braided monoidal functor $\mathcal C\to \mathcal C'$ is called {\bf symmetric}.

3. If $\mathcal C,\mathcal C'$ are symmetric tensor categories then a {\bf symmetric tensor functor}
$F: \mathcal C\to \mathcal C'$ is an exact symmetric monoidal functor.\footnote{Such a functor is automatically faithful, see \cite{EGNO}, Remark 4.3.10.}
\end{Def}

\begin{rem} A symmetric monoidal functor automatically preserves duals, i.e., we have a canonical
isomorphism $F(X)^*\cong F(X^*)$.
\end{rem}

A symmetric tensor functor which is an equivalence of categories will be called a {\bf symmetric tensor equivalence}. One can now formulate one of our main problems.

\vskip .05in

{\bf Problem.} Classify symmetric tensor categories up to symmetric tensor equivalence, perhaps under some additional assumptions.

\subsubsection{Symmetric pseudotensor categories}

Often one has to consider categories with the same structures and properties
as symmetric tensor categories, except they are only Karoubian with finite dimensional Hom spaces rather than artinian. Such categories are called {\bf symmetric pseudotensor categories}. They are a lot more common than symmetric tensor categories since any additive category can be made Karoubian by the process of {\bf Karoubian completion}. Examples of symmetric pseudotensor categories which are not tensor
include categories of tilting modules for reductive groups in characteristic $p$, Deligne categories $\Rep GL_t$ for integer $t$, and many others. Some of these examples will be discussed below.

\subsection{Representations of Affine Group Schemes and Proalgebraic Completions}
Given that the definition of a symmetric tensor category is cooked up from the prototypical example $\Rep_\KK(G)$, we may ask ourselves if all symmetric tensor categories are of this form. The answer is ``no"; for instance, we can generalize and take $G$ to be an {\bf affine algebraic group} or, even more generally, an {\bf affine group scheme} over $\KK$.
\par
More concretely, this means that we have a commutative Hopf algebra $H = \mathcal{O}(G)$, which is the algebra of regular functions on $G$; then $G$ is recovered by $G = \mathrm{Spec} H$. We can interpret this as a functor $G(-)$ from commutative $\KK$-algebras to groups, where $G(R) = \Hom_{\KK-\mathrm{Alg}}(H, R)$, a group whose elements are points of $G$ over $R$. Finally, representations of $G$ are just finite-dimensional comodules of $H$.
\par
An affine algebraic group arises when the associated Hopf algebra is finitely generated and contains no nonzero nilpotent elements; the latter condition is guaranteed when $\mathrm{char}(\KK) = 0$ (\cite{EGNO}, Corollary 5.10.5).

Let us explain why this generalizes the previous setting of an abstract group. If we have a group $G$, we can define the (reduced) affine group scheme $\widehat{G}_\KK$ over $\KK$ called the \textbf{pro-algebraic completion} of $G$ such that $\Rep (\widehat{G}_\KK) \cong \Rep_\KK(G)$. Explicitly, $\mathcal{O}(\widehat{G}_\KK)$ is spanned by {\bf matrix coefficients} of finite-dimensional representations of $G$. Namely, if $V$ is a finite-dimensional representation of $G$, then a matrix coefficient is a function on $G$ of the form $\psi(g)  = \langle f, gv \rangle$, where $v \in V, f \in V^*$. Then the algebra spanned by all matrix coefficients is a Hopf algebra and coincides with the algebra of regular functions on $\widehat{G}_\KK$.
\par
Notice that this also includes the case of the category $\Rep(\g)$ of finite dimensional representations  of a {\bf Lie algebra} $\mathfrak{g}$ over $\KK$, as we can construct an affine group scheme in a similar way, such that its representation category is equivalent to that of $\g$. Namely, the corresponding Hopf algebra
$H$ is the subalgebra of $U(\g)^*$ spanned by matrix coefficients $a\mapsto \langle f,av\rangle$ where $a\in U(\g)$.

For more detail on such completions, see \cite{EGNO}, Example 5.4.4.

\subsection{The Category of Supervector Spaces}\label{supervec} It turns out that the representation categories of affine group schemes still do not exhaust all symmetric tensor categories; the simplest counterexample is the category of finite dimensional supervector spaces.
\par
Suppose that the characteristic of $\KK$ is not $2$. A supervector space is a $\Z/2$-graded vector space $V = V_0 \oplus V_1$. Elements of $V_0$ are called {\bf even}
and elements of $V_1$ are called {\bf odd}.
We can then define the category $\sVec_\KK$ of {\bf finite dimensional supervector spaces} over $\KK$ as the symmetric tensor category $\Rep_\KK (\Z/2)$ except that the braiding $c_{X,Y}: X\otimes Y \rightarrow Y \otimes X$ is given not by the usual swap but rather by the formula
$$
c_{X,Y}(x\otimes y) = (-1)^{|x||y|}(y \otimes x),
$$
where $x$ and $y$ are homogeneous (i.e. purely even or purely odd) and $|\cdot|$ denotes the parity of homogeneous elements. This is known as the {\bf Koszul sign rule}. Examples where supervector spaces arise are ubiquitous and include cohomology, differential forms, Lie superalgebras, etc.
\par

\subsection{Trace and dimension}
To show that the category of supervector spaces is really new, we will use the notion of
the dimension of a rigid object $X$ of a symmetric monoidal category.
This is a special case of the notion of the trace of an endomorphism
of $X$, which will also be important later.
\begin{Def}
  Let $X$ be a rigid object in a symmetric monoidal category $\mathcal{C}$, and suppose that $b: X\to X$ is a morphism.
  Define the {\bf trace} of $b$, denoted ${\rm Tr}(b)$,
  as the following composition:
  \[\mathbbm{1} \xrightarrow{coev_X} X \otimes X^* \xrightarrow{b\otimes 1_{X^*}}X\otimes X^*\xrightarrow{c_{X, X^*}} X^* \otimes X \xrightarrow{ev_X} \mathbbm{1}.\]
  Define the {\bf dimension} of $X$ by the formula ${\rm dim}(X)={\rm Tr}(1_X)$.
\end{Def}
  If $\End_\mathcal{C}(\mathbbm{1}) = \mathbb{K}$ (as happens, e.g.,  in symmetric pseudotensor categories over $\KK$) then the trace and hence the dimension are elements of $\KK$, and they generalize the notions of the trace of a linear operator and the dimension of a vector space. Indeed, in the case $\mathcal C=\Vecc_\KK$, if we fix a basis $\{x_i\}$ of $X$ and denote the dual basis by $\{x_i^*\}$, then the maps compose to give
  \[1 \mapsto \sum_{i=1}^{\dim_\KK X} x_i \otimes x_i^*\mapsto\sum_{i=1}^{\dim_\KK X} bx_i \otimes x_i^* \mapsto \sum_{i=1}^{\dim_\KK X} x_i^* \otimes bx_i   \mapsto \sum_{i=1}^{\dim_\KK X} (x_i^*, bx_i) = {\rm Tr}_X(b),\]
the usual trace of the linear endomorphism $b$. On the other hand, if $X = X_0 \oplus X_1$ is an object in $\sVec_\KK$, a similar argument shows that ${\rm Tr}(b)$ is the {\bf supertrace}:
$$
{\rm Tr}(b)={\rm tr}_{X_0}(b)-{\rm tr}_{X_1}(b).
$$
In particular, we have
$$
\dim X = \dim_\KK X_0 - \dim_\KK X_1
$$
where the LHS is the dimension in $\sVec_\KK$ and the RHS takes the usual vector space dimensions in $\Vecc_\KK$ and is then projected from $\Bbb Z$ to $\KK$.
\footnote{Note that in homological algebra this is precisely the Euler characteristic.}
In particular, if $X_0=0$ and $X_1$ is 1-dimensional then we have $X\otimes X\cong \mathbbm 1$
but $\dim(X)=-1$. On the other hand, in $\Rep(G)$ for an affine group scheme $G$, if $X\otimes X\cong \mathbbm{1}$ then $X$ is a 1-dimensional representation, so $\dim X=1$.
This shows that $\sVec_\KK$ is a genuinely new example (as ${\rm char}(\KK)\ne 2$, so $1\ne -1$ in $\KK$).

  \subsection{Affine Supergroup Schemes}
  The situations of affine group schemes and supervector spaces have a common generalization known as {\bf affine supergroup schemes}. Here, we take a {\bf supercommutative Hopf superalgebra} $H = H_0 \oplus H_1$, which is just a commutative Hopf algebra in $\sVec_\KK$. Then, as before, an affine supergroup scheme $G$ arises as $G = \mathrm{Spec} H$, and may be viewed as a functor from the category of supercommutative  algebras to the category of groups. The category $\Rep(G)$ again consists of finite-dimensional $H$-comodules. 
  
  \par
  We can slightly generalize even more. If $G$ is an affine supergroup scheme, suppose that $z \in G(\KK)$ is an element of order $2$ that acts on $\mathcal{O}(G) = H = H_0 \oplus H_1$ by parity (i.e. conjugation by $z$ acts as $(-1)^i$ on $H_i$). Then we can define $\Rep(G, z)$ to be the category of representations of $G$ on superspaces on which $z$ acts by parity.

\begin{Def} A symmetric tensor category $\mathcal C$ is called {\bf Tannakian}
if it is equivalent to ${\rm Rep}(G)$ where
 $G$ is an affine group scheme, and {\bf super-Tannakian}
  if it is equivalent to ${\rm Rep}(G,z)$, where
  $G$ is an affine supergroup scheme.
\end{Def}

\begin{rem} The notion of a super-Tannakian category
subsumes all the previous examples. Namely,
for an affine group scheme $G$ we have ${\rm Rep}(G)={\rm Rep}(G,1)$.
More generally, if $G$ is an affine supergroup scheme then define a new
supergroup scheme $\Bbb Z/2\ltimes G$, where the generator
$z$ of $\Bbb Z/2$ acts by parity. Then ${\rm Rep}(G)={\rm Rep}(\Bbb Z/2\ltimes G,z)$.
For example, ${\rm sVec}_\KK={\rm Rep}(\Bbb Z/2,z)$.
\end{rem}

For more details on these notions and constructions, see \cite{EGNO}, Subsection 9.11.

\subsection{Moderate Growth of Symmetric Tensor Categories and Deligne Categories}
Super-Tannakian categories comprise a large class of symmetric tensor categories, but again, it does not contain all of them, up to equivalence. To identify new symmetric tensor categories which are not of this type, we introduce a notion of size.
\par
If $X \in \mathcal{C}$ is an object in an artinian category $\mathcal{C}$, recall that the {\bf length} of $X$ is the number of composition factors in its Jordan-H\"older series. Now let $\ell_n(X)$ denote the length of $X^{\otimes n}$. If $\mathcal{C} = \Rep(G, z)$, where $G$ is an affine supergroup scheme, then $\ell_n(X)$ grows at most exponentionally in $n$, since $\ell_n(X) \leq (\dim_\KK X)^n$ just by comparing dimensions. In general, if $\ell_n(X)$ of all objects $X$ in a symmetric tensor category $\mathcal{C}$ grow subexponentially, we shall say $\mathcal{C}$ has \textbf{moderate growth}.

\begin{rem}\label{Schur} Recall the Schur-Weyl duality: if ${\rm char}(\KK)=0$
then for any vector space $V$ over $\KK$ and $d\ge 0$,
$$
V^{\otimes d}=\oplus_{\lambda: |\lambda|=d}\pi_\lambda\otimes S^\lambda V,
$$
as $S_d\times GL(V)$-modules, where $\pi_\lambda$ is the irreducible representation
of $S_d$ corresponding to a partition $\lambda$, and $S^\lambda$ is the corresponding Schur functor. It is easy to see that if $\mathcal C$ is of moderate growth
then for every $X\in \mathcal C$ there is a partition $\lambda$
such that the corresponding Schur functor $S^\lambda X$ vanishes
(indeed, otherwise the Schur-Weyl duality implies that
$\dim_\KK {\rm End}(X^{\otimes n})\ge n!$, see Subsection \ref{prope} below). The converse is also true (\cite{D1}), i.e., this property gives another characterization of symmetric tensor categories of moderate growth.
\end{rem}

It turns out that there exist symmetric tensor categories of faster than moderate growth (which are therefore not super-Tannakian). These include the \textbf{Deligne categories}, which are defined as certain interpolation categories of representation categories of various classical groups. For instance, we can interpolate $\Rep GL_n(\C)$ to a category $\Rep GL_t$, where $n \in \mathbb{N}$ and $t \in \C$ (and these categories will be discussed below). However, in characteristic $0$, we have a remarkable theorem of Deligne (2002) which tells us that
for moderate growth there are no further examples:

\begin{thm}\label{Del}(\cite{D1})
  Every symmetric tensor category of moderate growth over an algebraically closed field of characteristic $0$  is super-Tannakian.\end{thm}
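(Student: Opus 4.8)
The plan is to reduce the statement to Tannakian reconstruction: it suffices to construct a symmetric tensor functor (a \emph{fiber functor}) $F\colon\mathcal C\to\sVec_\KK$. Given such an $F$, one forms the affine supergroup scheme $G=\underline{\mathrm{Aut}}^{\otimes}(F)$ (its coordinate Hopf superalgebra arising as the usual coend), takes for $z\in G(\KK)$ the canonical parity involution coming from the $\Z/2$-grading of $\sVec_\KK$, and checks by the standard reconstruction argument that the resulting symmetric tensor functor $\mathcal C\to\Rep(G,z)$ is an equivalence. So the whole problem is the construction of $F$.

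First I would reduce to a finitely $\otimes$-generated situation. Write $\mathcal C$ as the filtered union of its full symmetric tensor subcategories $\langle X\rangle$ generated by a single object $X$ together with its dual; by Remark~\ref{Schur}, moderate growth guarantees that each such $X$ is annihilated by some Schur functor $S^{\la}$. Choosing $\la$ minimal with $S^{\la}X=0$ pins down a pair of nonnegative integers $(m,n)$ — the smallest ``hook'' outside which $\la$ fails to fit — which will be the candidate super-dimension of $X$, consistent with the (forced) integrality $\dim X=m-n\in\Z$. If one can produce fiber functors $F_X\colon\langle X\rangle\to\sVec_\KK$ for every $X$, then a gluing/limit argument over this filtered system — legitimate in characteristic $0$, where the affine supergroup schemes in play are reduced enough to avoid higher inverse-limit obstructions — assembles them into $F$.

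The heart of the matter is showing that the bare vanishing $S^{\la}X=0$ already forces $X$ to behave like the standard object of $\Rep\bigl(GL(m|n)\bigr)$ inside $\langle X\rangle$. One exploits the natural $S_{d}$-action on $X^{\otimes d}$ — semisimple because $\mathrm{char}(\KK)=0$ — together with rigidity and the positivity and integrality of categorical dimensions. The cleanest case is purely even: if $\extp^{n+1}X=0\neq\extp^{n}X$, then $\extp^{n}X$ is an invertible object, $\dim X=n$, and the canonical pairings $X\otimes\extp^{n-1}X\to\extp^{n}X$ identify $X^{*}$ with $\extp^{n-1}X\otimes(\extp^{n}X)^{-1}$, so $X$ is indistinguishable from an $n$-dimensional vector space. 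The general case interpolates this with its odd analogue (in which some symmetric power $S^{N}X$ eventually vanishes), producing in addition an odd invertible object and a compatible ``super'' decomposition of $X$; feeding this structure back in, one identifies $\langle X\rangle$ with the representation category of a supergroup scheme — a subquotient of the general linear supergroup picture, possibly after adjoining an odd line object — and takes $F_X$ to be the pullback of its tautological fiber functor.

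The main obstacle is precisely this structural extraction of a $GL(m|n)$-pattern from the single vanishing $S^{\la}X=0$, followed by the compatibility of the $F_X$ across the filtered system: the fiber functors of a super-Tannakian category to $\sVec_\KK$ form a nonempty gerbe banded by the associated supergroup, and one must show the system of such gerbes over the directed set of subcategories $\langle X\rangle$ admits a global section. This is where essentially all of the work of Deligne's argument goes. It is also where characteristic $0$ is indispensable — both the semisimplicity of $\KK S_{d}$ and the positivity of dimensions are used — which is why the theorem fails in characteristic $p$, e.g. for $\Ver_{p}$.
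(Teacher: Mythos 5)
First, a framing point: the paper does not prove Theorem \ref{Del}; it is quoted from \cite{D1}, so there is no internal argument to measure yours against. Judged on its own terms, your proposal correctly identifies the two pillars of Deligne's strategy --- use moderate growth to get the vanishing of some Schur functor $S^{\la}X=0$ (Remark \ref{Schur}), then invoke super-Tannakian reconstruction once a superfiber functor $F\colon\mathcal C\to\sVec_\KK$ is in hand --- but the step carrying essentially all of the mathematical content is asserted rather than proved, as you acknowledge. From $S^{\la}X=0$ one must actually \emph{construct} $F$, and nothing in your sketch does this. Even in the purely even case, knowing that $\extp^{n+1}X=0$, that $\extp^{n}X$ is invertible, and that $X^{*}\cong\extp^{n-1}X\otimes(\extp^{n}X)^{-1}$ does not by itself produce a tensor functor to ${\rm Vec}_\KK$. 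Deligne's argument builds, in the ind-completion of $\mathcal C$, the commutative algebra $A$ representing the torsor of isomorphisms between $X$ and the standard object $\KK^{m|n}$, proves $A\neq 0$ --- this is where the combinatorics of exactly \emph{which} Schur functors annihilate $X$ enters, via the fact that $\{\la : S^{\la}X\neq 0\}$ is the set of partitions fitting in an $(m,n)$-hook --- and then verifies that $\Hom(\mathbbm 1, A\otimes -)$ is exact and faithful. None of this appears in your outline, and without it the proof does not close.

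Two smaller but genuine inaccuracies. First, the ``compatible super decomposition of $X$'' you invoke does not exist inside $\mathcal C$ itself: there is in general no direct sum $X=X_{0}\oplus X_{1}$ in $\mathcal C$ (think of an indecomposable non-irreducible representation of a supergroup); the even/odd splitting only materializes after base change to the algebra $A$ above, so it cannot be used as an input to constructing $F$ --- it is an output. Second, the gerbe-gluing over the filtered system of subcategories $\langle X\rangle$ is not where the difficulty lies and is not really how the argument is organized; the superfiber functor is produced on the whole category (or the passage to the limit is routine once existence on each $\langle X\rangle$ is known, using uniqueness of superfiber functors up to isomorphism). So: right skeleton, correct identification of where characteristic $0$ is used, but the theorem's actual content --- the existence proof for $F$ --- is missing.
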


\begin{rem}\label{inte} Theorem \ref{Del} implies that dimensions of objects
in a symmetric tensor category of moderate growth over a field of characteristic zero
are integers. This can also be seen directly using Remark \ref{Schur}, as
$\dim S^\lambda X=P_\lambda(\dim X)$, where
$P_\lambda$ is a polynomial with integer roots
given by the Weyl dimension formula for representations of $GL_n(\Bbb C)$.
However, we will see that this property fails for categories of non-moderate growth (e.g., Deligne categories).
\end{rem}

\begin{rem} As we will see later, Theorem \ref{Del} is not true in characteristic $p > 0$.
\end{rem}

\subsection{Tannakian formalism} It turns out that the super-Tannakian category $\Rep(G,z)$ completely determines the pair $(G,z)$ up to an isomorphism. This can be done via the {\bf Tannakian formalism}, discussed in \cite{DM} (see also \cite{EGNO}, Subsection 5.4).

Let us start with the case of Tannakian categories,
$\mathcal C={\rm Rep}(G)$, where $G$ is an affine group scheme over $\KK$.
Then we have the {\bf forgetful functor} $F: \mathcal{C} \rightarrow \sVec_\KK$, which turns out to be the unique symmetric tensor functor up to a (non-unique) isomorphism (see \cite{DM}), and
$G$ can be reconstructed as the group scheme of
tensor automorphisms of $F$, $G=\underline{\rm Aut}_{\otimes}(F)$.
This means that for any commutative $\KK$-algebra $R$,
$G(R)$ is the group of compatible collections $g$
of $R$-linear automorphisms $g_X$ of $F(X)\otimes_\KK R$
which preserve the tensor structure of $F$.

\begin{example}\label{topspace} Let $X$ be a path-connected Hausdorff topological space, and ${\rm LocSys}(X)$ be the category of locally constant sheaves of finite dimensional $\KK$-vector spaces on $X$ (also called local systems). Then ${\rm LocSys}(X)$ is a symmetric tensor category over $\KK$ in a natural way.
Moreover, this category is Tannakian:
given $x\in X$, it is well known that we have a symmetric tensor equivalence
$\bold F_x: {\rm LocSys}(X)\to \Rep_\KK(\pi_1(X,x))$ of ${\rm LocSys}(X)$ with the category
of representations of the fundamental group $\pi_1(X,x)$, which takes the fiber of the local system at $x$ viewed as a representation of $\pi_1(X,x)$. Composing this equivalence with the forgetful functor, we obtain a symmetric tensor functor $F_x:  {\rm LocSys}(X)\to {\rm Vec}_\KK$ which also takes the fiber of the local system at $x$, but now viewed just as a vector space. These functors for different $x$ are isomorphic as stated above, but not canonically: an isomorphism $h_\gamma: F_x\to F_y$ is defined by a homotopy class of paths $\gamma: [0,1]\to X$ with $\gamma(0)=x$ and $\gamma(1)=y$, namely it is the holonomy
along $\gamma$.

The group $\pi_1(X,x)$ is not an affine group scheme, so it cannot be recovered from $F_x$. But we can recover the proalgebraic completion $\widehat{\pi_1(X,x)}_\KK$. Namely, $\widehat{\pi_1(X,x)}_\KK=\underline{\rm Aut}_{\otimes}(F_x)$. We see, however, that since there is no canonical choice of the point $x$, there is no canonical choice of $F_x$, so the group $\widehat{\pi_1(X,x)}_\KK$ is determined by
the category ${\rm LocSys}(X)$ not functorially but only up to (inner) automorphisms.
\end{example}

Motivated by Example \ref{topspace}, a symmetric tensor functor $F: \mathcal C\to {\rm Vec}_\KK$ is called a {\bf fiber functor}.

We have the same story in the more general super-case. Namely, given
a super-Tannakian category $\mathcal C\cong \Rep(G,z)$ we have a unique (up to a non-unique isomorphism) symmetric tensor functor $F: \mathcal C\to {\rm sVec}_\KK$
called a {\bf superfiber functor} (namely, the forgetful functor
on ${\rm Rep}(G,z)$). Moreover, the following theorem holds.

\begin{thm} (Super-Tannakain reconstruction theorem, \cite{D1}) We have
$G\cong \underline{\rm Aut}_{\otimes}(F)$ as supergroup schemes.\footnote{This has the same meaning as in the Tannakian case, except we now need to take $R$ to be a supercommutative $\KK$-superalgebra.}
Moreover, $z\in G(\KK)$ is just the parity automorphism.
\end{thm}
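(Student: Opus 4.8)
The plan is to run the Saavedra Rivano--Deligne--Milne Tannakian reconstruction in the $\Z/2$-graded setting. First I would reduce to the model situation $\mathcal C=\Rep(G,z)$ with $F$ the forgetful functor $X\mapsto X$ (retaining the $\Z/2$-grading): by the uniqueness of the superfiber functor recalled above, any superfiber functor on $\mathcal C$ is isomorphic to this one as a symmetric tensor functor, and a tensor isomorphism $F\cong F'$ induces, by conjugation, an isomorphism $\underline{\mathrm{Aut}}_\otimes(F)\cong\underline{\mathrm{Aut}}_\otimes(F')$ of the associated group-valued functors, so it suffices to treat the model case.

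In the model case I would write down the comparison morphism $\phi\colon G\to\underline{\mathrm{Aut}}_\otimes(F)$: for a supercommutative $\KK$-superalgebra $R$ and $g\in G(R)=\Hom_{\KK\text{-alg}}(H,R)$ with $H=\mathcal O(G)$, let $g$ act on $F(X)\otimes_\KK R$ through the $H$-comodule structure $\rho_X$ of $X$ by $v\mapsto (1\otimes g)\rho_X(v)$. Because $H$ is a bialgebra this action is natural in $X$, compatible with the tensor structure $J$ of $F$ and fixes $\mathbbm{1}$, hence it is an element of $\underline{\mathrm{Aut}}_\otimes(F)(R)$, functorial in $R$. That $\phi$ is a monomorphism is the short direction: if $g$ acts as the identity on every object, then in particular on each finite-dimensional subcomodule $W\subseteq H$ for the regular coaction $\Delta$, so $(1\otimes g)\Delta|_W=\mathrm{id}_W$ for all such $W$, and applying the counit axiom $(\epsilon\otimes 1)\Delta=\mathrm{id}$ forces $g=\epsilon$, the identity of $G(R)$.

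The main step is to construct a two-sided inverse, i.e. to show that every tensor automorphism of $F$ comes from a point of $G$. Here I would use that $H$ is spanned by matrix coefficients $c^X_{f,v}$, $c^X_{f,v}(g)=\langle f,gv\rangle$, and more precisely that the surjection $\bigoplus_{X\in\mathcal C}F(X)^*\otimes F(X)\twoheadrightarrow H$, $f\otimes v\mapsto c^X_{f,v}$, identifies $H$ with the coend $\int^{X}F(X)^*\otimes F(X)$, the quotient by the naturality relations $c^Y_{f,\alpha v}=c^X_{\alpha^*f,v}$ for $\alpha\colon X\to Y$, with coalgebra structure coming from $\mathrm{coev}$/$\mathrm{ev}$ in $\mathcal C$ and algebra structure coming from the tensor structure $J$ of $F$; this is where exactness and faithfulness of $F$ enter. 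Given $\eta=(\eta_X)\in\underline{\mathrm{Aut}}_\otimes(F)(R)$, the assignment $g(c^X_{f,v}):=\langle f\otimes 1,\eta_X(v\otimes 1)\rangle$ is well defined exactly because $\eta$ is natural (it kills the coend relations), is a $\KK$-algebra homomorphism $H\to R$ because $\eta$ is monoidal ($\eta_{X\otimes Y}=\eta_X\otimes\eta_Y$ via $J$ gives multiplicativity, $\eta_{\mathbbm{1}}=\mathrm{id}$ gives unitality), and a direct check yields $\phi(g)=\eta$; uniqueness of such a $g$ is immediate since matrix coefficients span $H$. Hence $\phi$ is an isomorphism of functors, which in particular exhibits $\underline{\mathrm{Aut}}_\otimes(F)$ as representable, and $G\cong\underline{\mathrm{Aut}}_\otimes(F)$ as supergroup schemes.

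Finally, the statement about $z$ is a definitional unwinding. The identity functor of $\sVec_\KK$ carries a canonical central order-two tensor automorphism, the parity operator; pulling it back along $F$ gives a canonical central order-two element of $\underline{\mathrm{Aut}}_\otimes(F)(\KK)$, whose component at an object $X$ is the parity operator of the superspace $F(X)=F(X)_0\oplus F(X)_1$, which by the very definition of $\Rep(G,z)$ is the action of $z$; hence $\phi(z)$ is precisely this canonical parity automorphism. I expect the surjectivity step to be the main obstacle, since it requires establishing the coend presentation of $\mathcal O(G)$ together with its compatibility with the coalgebra, algebra, and (in the super setting) grading structures; the graded refinement itself is essentially free, because an $\sVec_\KK$-valued fiber functor merely transports the grading data and the Koszul signs that appear match on both sides.
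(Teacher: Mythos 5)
The paper does not prove this theorem; it is quoted from Deligne \cite{D1} without argument, so there is no ``paper's proof'' to compare against. Your outline is the standard Saavedra Rivano--Deligne--Milne reconstruction transported to $\sVec_\KK$, which is indeed the right strategy, and the overall architecture (reduction to the model case, the comparison map $\phi$, injectivity via the counit, surjectivity via the coend presentation of $\mathcal O(G)$ by matrix coefficients, and the definitional identification of $z$) is sound.

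There is, however, one genuine gap: both your injectivity step and your coend presentation quietly assume that the relevant $\mathcal O(G)$-comodules are objects of $\mathcal C=\Rep(G,z)$, and as written they are not. The objects of $\Rep(G,z)$ are comodules on which $z$ acts by the parity operator. A finite-dimensional subcomodule $W\subseteq H$ of the regular coaction carries the grading inherited from $H$, but $z$ acts on it by \emph{right translation}, which equals parity composed with left translation by $z$ and is therefore not the parity operator unless $z=1$. So ``$g$ acts as the identity on every object of $\mathcal C$'' does not directly apply to $W$, and likewise the coend $\int^{X\in\mathcal C}F(X)^*\otimes F(X)$ is not obviously all of $H$, only the span of matrix coefficients of those comodules lying in $\mathcal C$. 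The repair is the regrading lemma: for any finite-dimensional $H$-comodule $V$, decompose $V=V_+\oplus V_-$ into $z$-eigenspaces and declare $V_+$ even and $V_-$ odd; the hypothesis that conjugation by $z$ acts by parity on $H$ is exactly what makes the coaction even for this new grading, so the regraded $V$ lies in $\Rep(G,z)$ and the category of all $H$-comodules is equivalent to $\Rep(G,z)$. This is where the standing hypothesis on $z$ enters the proof --- your write-up never invokes it, which is the symptom of the gap. Once this lemma is inserted, the counit argument and the identification $H\cong\int^{X}F(X)^*\otimes F(X)$ go through, and the rest of your argument (including the statement about $z$) is correct.
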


\begin{rem} One may ask how to recognize Tannakian categories among symmetric tensor categories of moderate growth over a field of characteristic zero (which by Deligne's theorem is the same as super-Tannakian). It turns out that these are exactly those of them in which $\dim X=0$ implies $X=0$. Indeed, in this case all dimensions
have to be non-negative integers (see Remark \ref{inte}), and if $\dim X=n$
then $\wedge^{n+1}X=0$ (as $\dim \wedge^kX=\binom{\dim X}{k}$). But then the category is Tannakian by the ``baby Deligne theorem" proved in \cite{D4}.
\end{rem}

%%%%%%%%%%%%%%

\section{Lecture 2: Representation theory in non-integral rank}

\subsection{Deligne Categories}
Let us now turn to Deligne categories, which are categories of non-moderate growth. Let us assume the base field is $\KK = \C$ (though any algebraically closed field of characteristic $0$ would do).
\par
The prototypical example is $\Rep GL_t$ ($t \in \mathbb{C}$), which is an interpolation of $\Rep GL_n(\Bbb C)$, and more details can be found in \cite{DM}, Examples 1.26, 1.27, \cite{CW}, and \cite{EGNO}, Subsection 9.12. The construction can be suitably modified for other classical groups like the orthogonal group and the symplectic group.

In order to construct this category, we want to define $\Rep GL_n(\mathbb{C})$ without mention of $GL_n(\mathbb{C})$ or matrices (after all, how does one make sense of a $t \times t$ matrix for complex $t$?).
\par
It is known that $\Rep GL_n(\C)$ is a semisimple category (e.g., this follows from the Schur-Weyl duality). In particular, if $V$ denotes the vector representation of $GL_n(\C)$, then any representation of $GL_n(\C)$ appears as a direct summand of $[r, s] \coloneqq  V^{\otimes r}\otimes V^{* \otimes s}$ for sufficiently large $r, s$. Hence, we can define $\widetilde{\mathcal{C}}$ to be the full subcategory with objects of the form $[r,s]$, and then let $\mathcal{C} = \Rep GL_n(\Bbb C)$ be the Karoubian completion of $\widetilde{\mathcal{C}}$; as already mentioned, this means that we add direct summands and finite direct sums.
\par
Unfortunately, $V$ itself is still defined using matrices, and therefore so are the Hom spaces in $\widetilde{\mathcal C}$. We'd like to abstract that away, as already suggested by the notation $[r, s]$. To that end, we need to redefine the Hom spaces. To begin, we have
\begin{align*}
  \Hom([r,s], [p,q]) = \Hom_{GL_n(\Bbb C)}(V^{\otimes r}\otimes V^{* \otimes s}, V^{\otimes p}\otimes V^{* \otimes q}) \\
  = \Hom_{GL_n(\Bbb C)}(V^{\otimes r}\otimes V^{\otimes q}, V^{\otimes p}\otimes V^{\otimes s}) \\
  = \begin{cases}
    0 & r + q \neq s + p \\
    \C[S_d]/I & n =  r + q = s + p.
\end{cases}
\end{align*}
where $S_d$ is the symmetric group on $\{1, 2, \dots, d\}$ and $I$ is some ideal which is $0$ if $d \leq \dim V$. The second equality is an application of adjunction properties of duals and the third equality is a consequence of the Schur-Weyl duality.

\begin{prop}
  If $\dim V \gg 0$, then the composition map
  \[\Hom([r_1, s_1], [r_2, s_2]) \times \Hom([r_2, s_2], [r_3, s_3]) \rightarrow \Hom([r_1, s_1], [r_3, s_3])\]
  is a polynomial in $n = \dim V$.
\end{prop}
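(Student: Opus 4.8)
\emph{Proof strategy.} The plan is to fix bases of the three $\Hom$ spaces that do not depend on $n$ at all, and then to check that, in these bases, composing two basis morphisms simply multiplies a third basis morphism by a power of $n=\dim V$, the exponent being the number of closed loops produced. First I would pin down the $n$-free bases. By the computation preceding the Proposition, $\Hom([r,s],[p,q])$ vanishes unless $r+q=s+p=:d$, and in that case the adjunction isomorphisms furnished by the evaluation and coevaluation of the rigid object $V$ give a canonical identification
$$\Hom_{GL_n(\C)}\big(V^{\otimes r}\otimes V^{*\otimes s},\,V^{\otimes p}\otimes V^{*\otimes q}\big)\;\xrightarrow{\ \sim\ }\;\End_{GL_n(\C)}\big(V^{\otimes d}\big).$$
By Schur--Weyl duality the target is spanned by the permutation operators $\rho(\sigma)$, $\sigma\in S_d$, and these are linearly independent as soon as $n\ge d$ --- which is exactly the statement that the ideal $I$ above is $0$ for $n\gg 0$. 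Transporting this basis through the isomorphism yields a basis of $\Hom([r,s],[p,q])$ indexed by $S_d$, or, in pictures, by the oriented Brauer diagrams with the evident boundary data. The key point is that the indexing \emph{set} is literally the same for all large $n$, so the composition maps for different $n$ are maps between ``the same'' vector spaces and can be compared.

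Next I would analyze composition diagrammatically. If $f\in\Hom([r_1,s_1],[r_2,s_2])$ and $g\in\Hom([r_2,s_2],[r_3,s_3])$ are basis morphisms with diagrams $D_f$ and $D_g$, then stacking $D_g$ on top of $D_f$ and concatenating strands across the middle produces an oriented Brauer diagram $D$ from $[r_1,s_1]$ to $[r_3,s_3]$ together with some number $\ell=\ell(D_f,D_g)\ge 0$ of closed oriented loops. Reading this back in $\Rep GL_n(\C)$, the zig-zag identities for $ev_V,coev_V$ say that the remaining open strands assemble into precisely the basis morphism attached to $D$, while each closed loop contributes the scalar $ev_V\circ c_{V,V^*}\circ coev_V=\mathrm{Tr}(1_V)=\dim V=n$. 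Hence $g\circ f=n^{\,\ell(D_f,D_g)}\cdot D$, so with respect to the fixed bases every structure constant of the bilinear composition map is $0$ or a monomial $n^{\ell}$ with $\ell\ge 0$; in particular it is a polynomial in $n$, which is the assertion. (The exponent $\ell$ is determined by the combinatorics of $D_f$ and $D_g$ alone, not by $n$.) I would also record that ``$\dim V\gg 0$'' here means concretely that $n$ is taken larger than every $r_i+s_j$ that appears, so that the previous step applies to all three $\Hom$ spaces at once and the identity holds for all such $n$ with the same diagrams.

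The hard part will be the diagrammatic bookkeeping in the second step: one must verify that the adjunction isomorphisms of the first step really do intertwine honest composition of morphisms between $[r,s]$-type objects with honest concatenation of oriented Brauer diagrams, the sole discrepancy being the scalar $n^{\ell}$ coming from the circles. This is a routine but slightly delicate manipulation in the rigid symmetric monoidal category $\Rep GL_n(\C)$, built entirely from the zig-zag identities for $ev_V,coev_V$, naturality of the braiding, and the single scalar identity for a closed circle; everything else is formal.
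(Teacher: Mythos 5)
Your proposal is correct and follows essentially the same route as the paper's sketch: a basis of (walled/oriented) Brauer diagrams independent of $n$ for large $n$, with composition given by stacking diagrams and contracting closed loops, each loop contributing a factor of $n$, so that the structure constants are monomials in $n$. Your derivation of the diagram basis via adjunction and Schur--Weyl duality just makes explicit what the paper takes for granted.
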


\begin{proof} (sketch) First of all, observe that for large $n$, these Hom spaces are independent of $n$. Namely, a basis for morphisms is given by {\bf walled Brauer diagrams}. In more detail, a basis element of $\Hom([r,s], [p, q])$ can be depicted as follows (see the diagram below). We have two rows of arrows, with $p$ up-arrows and $q$ down-arrows in the top row, and $r$ up-arrows and $s$ down-arrows in the bottom row. Up arrows and down arrows are separated by a wall. A basis element is then a perfect matching connecting disjoint pairs of arrows such that connecting arrows between arrows in different rows do not cross the wall (i.e., the direction of arrows is preserved).

 As an example, a basis element of $\Hom([4,4], [3,3])$ looks like this:

  \begin{center}
    \begin{tikzpicture}
    \draw[->, thick, >=latex] (1/2,0) -- (1/2,1);
    \draw[->, thick, >=latex] (2/2,0) -- (2/2,1);
    \draw[->, thick, >=latex] (3/2,0) -- (3/2,1);
    \draw[<-, thick, >=latex] (4/2,0) -- (4/2,1);
    \draw[<-, thick, >=latex] (5/2,0) -- (5/2,1);
    \draw[<-, thick, >=latex] (6/2,0) -- (6/2,1);
    \draw[-, thick] (7/4, -4) -- (7/4, 2);
    \draw[->, thick, >=latex] (0/2,-3) -- (0/2,-2);
    \draw[->, thick, >=latex] (1/2,-3) -- (1/2,-2);
    \draw[->, thick, >=latex] (2/2,-3) -- (2/2,-2);
    \draw[->, thick, >=latex] (3/2,-3) -- (3/2,-2);
    \draw[<-, thick, >=latex] (4/2,-3) -- (4/2,-2);
    \draw[<-, thick, >=latex] (5/2,-3) -- (5/2,-2);
    \draw[<-, thick, >=latex] (6/2,-3) -- (6/2,-2);
    \draw[<-, thick, >=latex] (7/2,-3) -- (7/2,-2);

    \draw [->, thick, >=latex, red] (0/2,-2) to [bend left] (6/2,-2);
    \draw [->, thick, >=latex, red] (1/2,-2) to  (2/2,0);
    \draw [->, thick, >=latex, red] (2/2,-2) to  (1/2,0);
    \draw [->, thick, >=latex, red] (3/2,-2) to [bend left] (5/2,-2);
    \draw [->, thick, >=latex, red] (4/2,0) to  (7/2,-2);
    \draw [->, thick, >=latex, red] (5/2,0) to  (4/2,-2);
    \draw [->, thick, >=latex, red] (6/2,0) to [bend left] (3/2,0);

    \end{tikzpicture}
  \end{center}

Now, let's consider the composition of morphisms.
Composition is given by stacking two diagrams and contracting any loops that arise, while introducing a factor of $n$ for each loop contracted. Thus the structure constants of composition are polynomial (even monomial!) functions of $n$, as claimed.
\end{proof}

So now we can interpolate to define the category $\widetilde{\Rep} \ GL_t$ as the category whose objects are $[r, s]$ for $r, s \in \Z_{\geq 0}$ and the morphisms and their composition are as above, but the parameter $n$ is replaced by $t$.
This is a symmetric monoidal category, with tensor product given just by placing walled Brauer diagrams next to each other (so $[p,q]\otimes [r,s]=[p+r,q+s]$), and the symmetry is given by swapping the diagrams.
Moreover, this category is rigid, with $[r,s]^*=[s,r]$, and its unit object is $[0,0]$.

The endomorphism rings in the category $\widetilde{\Rep} \ GL_t$, namely= $W_{r,s}(t) \coloneqq \End ([r, s])$, are called {\bf walled Brauer algebras}; we see that as a vector space, $W_{r,s}(t)$ is isomorphic to $\mathbb{C}[S_{r+s}]$ but it has a different multiplication depending on $t$.

\begin{prop}\label{semi} (\cite{CDDM}, Theorem 6.3) If $t\notin \Bbb Z$ then
  the algebras $W_{r,s}(t)$ are semisimple.
  \end{prop}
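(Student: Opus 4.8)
Let me think about how to prove this. The walled Brauer algebra $W_{r,s}(t)$ has the "generic" structure when $t$ is not an integer. The key point is that $W_{r,s}(t)$ over $\mathbb{C}(t)$ is semisimple (this is a general fact about deformation algebras), and we need to show that semisimplicity persists at specializations $t \notin \mathbb{Z}$.

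The plan is to use a bilinear form / Gram determinant argument. The approach goes roughly like this.

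First I would recall that the walled Brauer algebra $W_{r,s}(t)$ is a cellular algebra (in the sense of Graham–Lehrer) over $\mathbb{C}[t]$. The cell modules are indexed by pairs of partitions $(\lambda, \mu)$ with $|\lambda| \le r$, $|\mu| \le s$ and $|\lambda| - |\mu| \equiv r - s$ appropriately (the "weights"). For a cellular algebra, semisimplicity at a given specialization of $t$ is equivalent to the non-degeneracy of the bilinear form on every cell module; equivalently, the product of the Gram determinants $G_{\lambda,\mu}(t)$ of these forms is nonzero. So the core task reduces to: show $G_{\lambda,\mu}(t) \neq 0$ whenever $t \notin \mathbb{Z}$.

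Next I would compute, or rather cite/recall, the factorization of these Gram determinants. The standard technique (going back to work on the Brauer algebra by Wenzl, Rui, and others, adapted to the walled Brauer case in \cite{CDDM}) is to show that each $G_{\lambda,\mu}(t)$ factors as a product of linear factors of the form $(t - c)$ where $c$ ranges over a set of \emph{integers} — contents of boxes, and similar combinatorial quantities arising from adding/removing boxes from $\lambda$ and $\mu$. The cleanest way to see that all the roots are integers is via a restriction/induction (branching) argument: one relates $W_{r,s}(t)$ to $W_{r-1,s}(t)$ and $W_{r,s-1}(t)$, expresses the form on a cell module in terms of forms on cell modules for the smaller algebras together with explicit scalar factors coming from the Jucys–Murphy-type elements, and inductively deduces that the new factors introduced at each step are $t$ minus an integer. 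The base case $r = s = 0$ is trivial ($W_{0,0}(t) = \mathbb{C}$).

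Once we know $G_{\lambda,\mu}(t) = \prod (t - c_i)$ with all $c_i \in \mathbb{Z}$, the conclusion is immediate: for $t \notin \mathbb{Z}$ every factor is nonzero, so every cell module has a non-degenerate form, so $W_{r,s}(t)$ is semisimple.

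The main obstacle is the second step — pinning down that every root of every Gram determinant is an integer. This is where the real combinatorial work lies: setting up the branching correctly, identifying the Jucys–Murphy elements for the walled Brauer algebra and their eigenvalues on cell modules, and checking that the "new" linear factor at each inductive step is of the form $t - (\text{integer})$ rather than something worse. In a self-contained treatment one would also need to verify cellularity of $W_{r,s}(t)$ over $\mathbb{C}[t]$ in the first place, but since \cite{CDDM} establishes exactly this, I would quote Theorem 6.3 there for the precise statement and use the argument above only as a conceptual sketch of why $t \notin \mathbb{Z}$ is the right condition.
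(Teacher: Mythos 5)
The paper does not actually prove this proposition: it explicitly says ``We omit the proof but consider an example,'' deferring entirely to \cite{CDDM}, Theorem 6.3, and illustrating only the case $r=s=1$, where $W_{1,1}(t)=\C[a]/(a^2-ta)$ is semisimple iff $t\ne 0$. Your sketch is a correct outline of the standard route taken in the literature (cellularity of $W_{r,s}$ over $\C[t]$, reduction of semisimplicity to non-degeneracy of the bilinear forms on cell modules, and a branching/Jucys--Murphy argument showing that every Gram determinant is a product of factors $t-c$ with $c\in\Z$), and you correctly identify where the real work lies. One small imprecision: the cell modules are indexed by pairs $(\lambda,\mu)$ with $|\lambda|=r-k$, $|\mu|=s-k$ for $0\le k\le\min(r,s)$, i.e., $r-|\lambda|=s-|\mu|$ exactly, not merely a congruence. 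Since you, like the paper, ultimately quote \cite{CDDM} for the decisive combinatorial input, your proposal is in substance the same as what the paper does, just with more of the underlying mechanism made explicit; as a self-contained proof it would still require carrying out the Gram-determinant computation, but as a justification at the level the paper operates it is adequate.
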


We omit the proof but consider an example. In the case $r = s = 1$, a basis of $W_{1, 1}(t)$ consists of two diagrams:
\begin{center}
  \begin{tikzpicture}

  \draw[->, thick, >=latex] (3/2,0) -- (3/2,1);
  \draw[<-, thick, >=latex] (4/2,0) -- (4/2,1);

  \draw[->, thick, >=latex] (3/2,-2) -- (3/2,-1);
  \draw[<-, thick, >=latex] (4/2,-2) -- (4/2,-1);

  \draw [->, thick, >=latex, red] (3/2,-1) to (3/2, 0);
  \draw [<-, thick, >=latex, red] (4/2,-1) to  (4/2,0);

  \end{tikzpicture} \qquad\qquad
  \begin{tikzpicture}

  \draw[->, thick, >=latex] (3/2,0) -- (3/2,1);
  \draw[<-, thick, >=latex] (4/2,0) -- (4/2,1);

  \draw[->, thick, >=latex] (3/2,-2) -- (3/2,-1);
  \draw[<-, thick, >=latex] (4/2,-2) -- (4/2,-1);

  \draw [->, thick, >=latex, red] (3/2,-1) [bend left] to (4/2, -1);
  \draw [->, thick, >=latex, red] (4/2,0) [bend left] to  (3/2,0);

  \end{tikzpicture}
\end{center}

 The diagram on the left is the identity element, and the diagram on the right, which we will call $a$, satisfies the relation $a^2 = ta$. Hence, $W_{1,1}(t) = \C[a]/(a^2-ta)$, which is semisimple if and only if $t \neq 0$.
\par
We then define the category $\Rep GL_t$ for $t \in \mathbb{C}$ to be the Karoubian completion of $\widetilde{\Rep} \ GL_t$.

Proposition \ref{semi} implies

\begin{cor} If $t\notin \Bbb Z$ then the category $\Rep GL_t$ is a semisimple symmetric tensor category.
\end{cor}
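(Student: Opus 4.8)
The goal is to deduce from Proposition~\ref{semi} that $\Rep GL_t$ is a semisimple symmetric tensor category when $t \notin \Z$. The strategy is to unpack the definition of $\Rep GL_t$ as the Karoubian completion of $\widetilde{\Rep}\,GL_t$ and check the requirements of a symmetric tensor category one at a time: we already know it is $\C$-linear, rigid (with $[r,s]^* = [s,r]$), symmetric monoidal, and that $\otimes$ is bilinear on morphisms, all established in the preceding discussion. What remains is to show (i) the category is abelian and artinian, (ii) $\End(\mathbbm 1) = \C$, and crucially (iii) it is semisimple. The organizing principle is the standard fact that in a Karoubian $\C$-linear category with finite-dimensional Hom spaces, if \emph{all} endomorphism algebras are semisimple (in the ring-theoretic sense), then the category itself is semisimple abelian.

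\emph{Step 1: reduce semisimplicity of the category to semisimplicity of endomorphism algebras.} First I would observe that $\widetilde{\Rep}\,GL_t$ has finite-dimensional Hom spaces (each has a basis of walled Brauer diagrams), so its Karoubian completion $\Rep GL_t$ also does. By the Krull--Schmidt theorem, every object of $\Rep GL_t$ decomposes into a direct sum of indecomposables, and indecomposable objects correspond to primitive idempotents in the algebras $W_{r,s}(t) = \End([r,s])$ (together with the finitely many "new" idempotents introduced by completion). The key point is that an object $X$ with $\End(X)$ a semisimple algebra and no nonzero proper idempotent-split subobjects is simple; more precisely, I would invoke the general lemma that a Karoubian $\C$-linear category with finite-dimensional Homs in which every $\End(X)$ is a semisimple algebra is automatically abelian and semisimple (this is essentially because every morphism then has a kernel and image realized via idempotents). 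Since every object of $\Rep GL_t$ is a direct summand of some $[r,s]$, it suffices to know $W_{r,s}(t)$ is semisimple for all $r,s$ — which is exactly Proposition~\ref{semi}.

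\emph{Step 2: the remaining axioms.} Once semisimplicity is in hand, the abelian and artinian structure comes for free: a semisimple $\C$-linear Karoubian category with finite-dimensional Homs is abelian, every object has finite length (it is a finite direct sum of simples, as the Hom spaces are finite-dimensional), and Hom spaces are finite-dimensional, so the artinian condition holds. For $\End_{\Rep GL_t}(\mathbbm 1) = \C$: the unit object is $\mathbbm 1 = [0,0]$, and $\End([0,0]) = W_{0,0}(t)$ is spanned by the single empty walled Brauer diagram, hence is $\C$; one should also check $\mathbbm 1$ remains indecomposable in the Karoubian completion, but since $\End([0,0]) = \C$ has no nontrivial idempotents, no new summands of $\mathbbm 1$ appear. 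Combined with the already-noted rigidity, symmetry, and bilinearity of $\otimes$, this verifies every clause of the definition of a symmetric tensor category.

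\emph{Main obstacle.} The genuine mathematical content — semisimplicity of the walled Brauer algebras $W_{r,s}(t)$ for $t \notin \Z$ — is exactly what Proposition~\ref{semi} supplies, so it is not an obstacle here. The subtle point I would be most careful about is Step~1: the passage from "all endomorphism algebras are semisimple" to "the category is semisimple abelian." One must make sure that the Karoubian completion does not introduce objects whose endomorphism algebra fails to be semisimple — but any object of $\Rep GL_t$ is a summand $e[r,s]$ for an idempotent $e \in W_{r,s}(t)$, and $\End(e[r,s]) = e W_{r,s}(t) e$ is a corner algebra of a semisimple algebra, hence semisimple. With that closure property checked, the general categorical lemma applies cleanly, and the corollary follows.
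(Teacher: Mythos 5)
Your proposal is correct and is exactly the standard argument the paper leaves implicit when it writes ``Proposition~\ref{semi} implies'': every object of $\Rep GL_t$ is a summand $e[r,s]$, its endomorphism algebra $eW_{r,s}(t)e$ is a corner of a semisimple algebra and hence semisimple, and a Karoubian $\C$-linear category with finite-dimensional Hom spaces and semisimple endomorphism algebras is semisimple abelian. The remaining axioms ($\End(\mathbbm 1)=\C$, rigidity, symmetry, bilinearity) are verified just as you say, so nothing is missing.
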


\subsection{Properties of $\Rep GL_t$ for $t\notin \Bbb Z$}\label{prope}
We can now show that $\Rep GL_t$ does not have moderate growth, and therefore is not super-Tannakian. If $X = \bigoplus_i m_i X_i$ is the direct sum of simple objects, then its length $\ell(X)$ equals $\sum_i m_i$. It follows that $\End X = \bigoplus_i {\rm Mat}_{m_i}$ is a direct sum of matrix rings, so $\dim\End X = \sum_i m_i^2$. Therefore, $\ell(X) \geq \sqrt{\dim \End X}$. In our particular case, if we take $X = V^{\otimes d} = [1, 0]^{\otimes d}$, then $\End X = \End V^{\otimes d} = \mathbb{C}[S_d]$, so
$$
\ell_d(V)=\ell(X) \geq \sqrt{\dim \End X} = \sqrt{d!},
$$
 which grows faster than exponential.

 Another way to show that this category is not super-Tannakian is to compute categorical dimensions. We have $\dim V = \tr(1_V) = t \not \in \Z$, while in a super-Tannakian category $\dim V$ has to be an integer.
\par
Since for $t \not \in \Z$, the category $\Rep GL_t$ is semisimple, it is determined, as an abelian category, by describing its simple objects. The simple objects in $\Rep GL_t$ for $t \not \in \Z$ are $V_{\lambda, \mu}$ indexed by pairs of partitions  $\lambda = (\lambda_1, \dots, \lambda_r), \mu = (\mu_1, \dots, \mu_s)$ (with no restrictions), which interpolates the irreducible representations of $GL_n(\Bbb C)$ whose highest weights are $(\lambda_1, \dots, \lambda_r, 0, \dots, 0, -\mu_s, \dots -\mu_1)$, where there are $n - r - s$ zeros (this makes sense for $n \geq r + s$).
\par
\subsection{Tensor ideals, quotient categories, and negligible morphisms} We would now like to take a closer look at the case when $t \in \Z$. In this case the category $\Rep GL_t$ is not a symmetric tensor category but rather is only pseudotensor. To analyze this case, we will need the notion of a {\bf tensor ideal} in a symmetric pseudotensor category (as well as that of the {\bf quotient} by such an ideal), and the notion of a {\bf negligible morphism}.

\begin{Def}[Tensor Ideal] Let $\mathcal C$ be a symmetric pseudotensor category.
  A tensor ideal $I$ in $\mathcal{C}$ is a collection of subspaces
  $$
  I = \{I(X, Y) \subseteq \Hom_{\mathcal{C}}(X,Y)\}_{X, Y \in \mathcal{C}}
  $$
   such that for all objects $X, Y, Z, T \in \mathcal{C}$,
  \begin{enumerate}
    \item $\forall \alpha \in I(X, Y), \beta \in \Hom_{\mathcal{C}}(Y, Z), \gamma \in  \Hom_{\mathcal{C}}(Z, X)$, we have $\beta \circ \alpha \in I(X, Z)$ and $\alpha \circ \gamma \in I(Z, Y)$.
    \item $\forall \alpha \in I(X, Y), \beta \in \Hom_{\mathcal{C}}(Z, T)$, we have $\alpha \otimes \beta \in I(X \otimes Z, Y \otimes T)$ and $\beta \otimes \alpha \in I(Z \otimes X, T \otimes Y)$.
  \end{enumerate}
\end{Def}

Thus, the notion of a tensor ideal is, in a sense, a categorification of the notion of an ideal in a ring.

If $I\subset \mathcal C$ is a tensor ideal, then one can define a symmetric pseudotensor category $\mathcal C/I$, which has the same
objects as $\mathcal C$ but ${\rm Hom}_{\mathcal C/I}(X,Y)={\rm Hom}_{\mathcal C}(X,Y)/I(X,Y)$. This category is called the {\bf quotient of $\mathcal C$ by $I$}, and it categorifies the notion of the quotient of a ring by an ideal.

\begin{Def}[Negligible Morphism]
A morphism $f : X \rightarrow Y$ in a symmetric pseudotensor category $\mathcal{C}$ is said to be \textbf{negligible} if for all $g : Y \rightarrow X$, ${\rm Tr}(f \circ g) = 0$.
\end{Def}

\begin{prop} The collection $\mathcal N(\mathcal{C})$ of negligible morphisms in $\mathcal{C}$ forms a tensor ideal.
\end{prop}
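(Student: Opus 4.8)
The plan is to verify directly the three requirements built into the definition of a tensor ideal: that each $\mathcal N(\mathcal C)(X,Y)\subseteq\Hom_{\mathcal C}(X,Y)$ is a $\KK$-subspace, that $\mathcal N(\mathcal C)$ is stable under left and right composition with arbitrary morphisms, and that it is stable under tensoring with arbitrary morphisms. Since $\mathcal C$ is rigid and $\End_{\mathcal C}(\mathbbm 1)=\KK$, the trace is a well-defined $\KK$-linear functional ${\rm Tr}\colon\End_{\mathcal C}(Z)\to\KK$ on every object $Z$, and the whole argument rests on three standard formal properties of it: linearity (it is a composite of $\KK$-linear maps, using bilinearity of composition and of $\otimes$); cyclicity, ${\rm Tr}(f\circ g)={\rm Tr}(g\circ f)$ for $f\colon X\to Y$ and $g\colon Y\to X$; and the existence of partial traces ${\rm Tr}_Z\colon\Hom_{\mathcal C}(A\otimes Z,B\otimes Z)\to\Hom_{\mathcal C}(A,B)$ compatible with the full trace, ${\rm Tr}_{A\otimes Z}={\rm Tr}_A\circ{\rm Tr}_Z$, and obeying the sliding rule ${\rm Tr}_Z\big(u\circ(v\otimes 1_Z)\big)={\rm Tr}_Z(u)\circ v$. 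I note at the outset that cyclicity makes negligibility a symmetric condition: $f\colon X\to Y$ is negligible iff ${\rm Tr}(g\circ f)=0$ for all $g\colon Y\to X$.

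Linearity of ${\rm Tr}$ together with bilinearity of composition gives the subspace property at once: if $\alpha_1,\alpha_2\in\mathcal N(X,Y)$ and $\lambda\in\KK$ then ${\rm Tr}\big((\lambda\alpha_1+\alpha_2)\circ g\big)=\lambda\,{\rm Tr}(\alpha_1\circ g)+{\rm Tr}(\alpha_2\circ g)=0$ for every $g\colon Y\to X$. For stability under composition, fix $\alpha\in\mathcal N(X,Y)$. If $\beta\colon Y\to Z$ and $h\colon Z\to X$ is arbitrary, then by cyclicity ${\rm Tr}\big((\beta\circ\alpha)\circ h\big)={\rm Tr}\big(\alpha\circ(h\circ\beta)\big)=0$ since $h\circ\beta\colon Y\to X$; hence $\beta\circ\alpha\in\mathcal N(X,Z)$. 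Likewise, if $\gamma\colon Z\to X$ and $h\colon Y\to Z$, then ${\rm Tr}\big((\alpha\circ\gamma)\circ h\big)={\rm Tr}\big(\alpha\circ(\gamma\circ h)\big)=0$ since $\gamma\circ h\colon Y\to X$; hence $\alpha\circ\gamma\in\mathcal N(Z,Y)$. This is axiom (1).

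For axiom (2), the point is that tensoring with a general morphism reduces to tensoring with an identity, using stability under composition already established: for $\alpha\colon X\to Y$ and $\beta\colon Z\to T$ one has $\alpha\otimes\beta=(1_Y\otimes\beta)\circ(\alpha\otimes 1_Z)$, so it suffices to show that $\alpha\in\mathcal N(X,Y)$ forces $\alpha\otimes 1_Z\in\mathcal N(X\otimes Z,Y\otimes Z)$. Given an arbitrary $h\colon Y\otimes Z\to X\otimes Z$, cyclicity, then compatibility of the full and partial traces, then the sliding rule yield
\[
{\rm Tr}\big((\alpha\otimes 1_Z)\circ h\big)={\rm Tr}\big(h\circ(\alpha\otimes 1_Z)\big)={\rm Tr}_X\big({\rm Tr}_Z(h)\circ\alpha\big)={\rm Tr}\big(\alpha\circ{\rm Tr}_Z(h)\big)=0,
\]
the last equality because ${\rm Tr}_Z(h)\colon Y\to X$ and $\alpha$ is negligible. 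Hence $\alpha\otimes 1_Z$, and therefore $\alpha\otimes\beta$, is negligible; and $\beta\otimes\alpha=c_{Y,T}\circ(\alpha\otimes\beta)\circ c_{X,Z}^{-1}$ is then negligible by axiom (1). This is axiom (2), so $\mathcal N(\mathcal C)$ is a tensor ideal.

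All the content is thus in the three input properties of ${\rm Tr}$, and of these the genuinely structural one — which I regard as the main obstacle, though it is standard and routine — is the construction of the partial trace ${\rm Tr}_Z$ together with the identity ${\rm Tr}_{A\otimes Z}={\rm Tr}_A\circ{\rm Tr}_Z$ and the sliding rule. This amounts to expressing $coev_{A\otimes Z}$ and $ev_{A\otimes Z}$ in terms of $coev_A,coev_Z,ev_A,ev_Z$ and the braiding $c$, and then sliding the ``$Z$-loop'' past the ``$A$-loop'' using the duality (zigzag) identities and naturality of $c$. Alternatively one simply invokes the general theory of (partial) traces in rigid symmetric monoidal categories as in \cite{EGNO}, after which the computations above constitute the entire proof.
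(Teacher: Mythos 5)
Your verification is correct and is exactly the "straightforward verification using the properties of the trace" that the paper's one-line proof alludes to: linearity gives the subspace condition, cyclicity gives closure under composition, and the partial-trace/sliding identities (standard for rigid symmetric monoidal categories, cf.\ \cite{EGNO}) reduce $\alpha\otimes\beta$ to $\alpha\otimes 1_Z$ and then to the negligibility of $\alpha$. Nothing further is needed.
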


\begin{proof}
This follows by straightforward verification using the properties of the trace.
\end{proof}

\subsection{Properties of $\Rep GL_t$ when $t\in \Bbb Z$}
Now let us study the category $\Rep GL_t$ for $t\in \Bbb Z$. We will restrict to the case $t\ge 0$.\footnote{It is easy to see that $\Rep GL_{-t}$ is equivalent to $\Rep GL_t$ with a modified symmetric structure, so the case $t\in \Z_{\le 0}$ is completely parallel.} In this case, we have the following result.

Let $GL_{m|n}(\Bbb C)$ be the {\bf general linear supergroup} (see \cite{EHS} and references therein).

\begin{thm} (\cite{C,CW})
Suppose $t \in \Z_{\geq 0}$. There exists an infinite chain of nontrivial tensor ideals $I_0 \supset I_1\supset I_2\cdots$ in $\Rep GL_t$, where $I_0=\mathcal N(\Rep GL_t)$ is the ideal of negligible morphisms. Moreover, we have an embedding  $\Rep GL_t/I_m\hookrightarrow \Rep GL_{t + m | m }(\Bbb C)$ as a full pseudotensor subcategory, which is an equivalence $\Rep GL_t/I_0\cong\Rep GL_{t }(\Bbb C)$ when $m = 0$. 
\end{thm}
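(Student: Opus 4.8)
\emph{Proof proposal.} The plan is to realize each $I_m$ as the kernel of an explicit symmetric monoidal functor into $\Rep GL_{t+m|m}(\C)$ and to analyze the resulting quotient via (super) invariant theory. By Deligne's universal property, a $\C$-linear symmetric monoidal functor out of the Karoubian category $\Rep GL_t$ is determined up to isomorphism by a dualizable object of categorical dimension $t$ in the target, namely the image of $[1,0]$. Applying this with target $\Rep GL_{t+m|m}(\C)$ and the standard representation $U_m := \C^{t+m|m}$ — whose categorical dimension (the supertrace of $1_{U_m}$) is $(t+m)-m=t$ — gives a functor $F_m$ with $F_m([r,s]) = U_m^{\otimes r}\otimes(U_m^*)^{\otimes s}$, sending a walled Brauer diagram in $\End([r,s]) = W_{r,s}(t)$ to the corresponding contraction operator on $U_m$. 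Put $I_m := \ker F_m = \{f : F_m(f)=0\}$; this is a tensor ideal because $F_m$ is a tensor functor, and $F_m$ factors as $\Rep GL_t \twoheadrightarrow \Rep GL_t/I_m \xrightarrow{\overline{F_m}} \Rep GL_{t+m|m}(\C)$ with $\overline{F_m}$ faithful. The essential image, together with all Hom spaces of $\Rep GL_{t+m|m}(\C)$ between those objects, is a symmetric monoidal subcategory with finite-dimensional Hom spaces; it is Karoubian (idempotents lift along a full faithful functor), hence a full pseudotensor subcategory, provided $\overline{F_m}$ is full.

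\textbf{Fullness, the case $m=0$, and $I_0=\mathcal N$.} Fullness of $\overline{F_m}$ onto its image reduces, after cutting by idempotents, to surjectivity of $W_{r,s}(t)\to\End_{GL_{t+m|m}(\C)}(U_m^{\otimes r}\otimes(U_m^*)^{\otimes s})$, which is the super first fundamental theorem of invariant theory for $GL_{m|n}$ (Sergeev; Berele--Regev); for $m=0$ it is the classical FFT for $GL_t(\C)$. For $m=0$, $\overline{F_0}$ is moreover essentially surjective, since every rational representation of $GL_t(\C)$ is a summand of some $V^{\otimes r}\otimes(V^*)^{\otimes s}$ (twist by a power of $\det$, which sits inside $\wedge^t V\subseteq V^{\otimes t}$, to reduce to polynomial representations and invoke Schur--Weyl); hence $\overline{F_0}$ is a symmetric tensor equivalence. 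Finally $I_0=\mathcal N(\Rep GL_t)$: the inclusion $I_0\subseteq\mathcal N$ holds because tensor functors preserve traces, so $F_0(f)=0$ forces ${\rm Tr}(f\circ g)={\rm Tr}(F_0(f)\circ F_0(g))=0$ for all $g$; conversely $\Rep GL_t(\C)$ is semisimple with all objects of strictly positive dimension and so has no nonzero negligible morphism, and fullness of $F_0$ lets one check that $F_0$ carries negligible morphisms to negligible ones, hence to $0$, giving $\mathcal N\subseteq I_0$.

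\textbf{Nestedness.} To prove $I_{m+1}\subseteq I_m$, compose $F_{m+1}$ with restriction along the block embedding $GL_{t+m|m}\hookrightarrow GL_{t+m+1|m+1}$; since $\mathrm{Res}(U_{m+1})=U_m\oplus\C^{1|1}$ with $\C^{1|1}$ the trivial (dimension $0$) representation, the functor $\mathrm{Res}\circ F_{m+1}$ sends a walled Brauer diagram to the corresponding contraction operator on $U_m\oplus\C^{1|1}$. Now $U_m$ is a direct summand of $U_m\oplus\C^{1|1}$ via $GL_{t+m|m}$-maps $\iota\colon U_m\to U_m\oplus\C^{1|1}$ and $\rho\colon U_m\oplus\C^{1|1}\to U_m$ with $\rho\iota=1_{U_m}$; because $\iota,\rho$ intertwine the evaluation/coevaluation maps of $U_m$ with those of $U_m\oplus\C^{1|1}$, and the two objects have equal dimension $t$ (so closed loops contribute the same scalar), one obtains $\rho^{\otimes p}\otimes(\iota^*)^{\otimes q}\circ(\mathrm{Res}\circ F_{m+1})(f)\circ\iota^{\otimes r}\otimes(\rho^*)^{\otimes s}=F_m(f)$ for every $f\colon[r,s]\to[p,q]$. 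Hence $F_{m+1}(f)=0\Rightarrow(\mathrm{Res}\circ F_{m+1})(f)=0\Rightarrow F_m(f)=0$, i.e.\ $I_{m+1}\subseteq I_m$.

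\textbf{Infinitude, nontriviality, and the main obstacle.} We have $I_m\subseteq I_0=\mathcal N\subsetneq\Hom$ since ${\rm Tr}(1_{\mathbbm 1})=1\neq 0$, so each $I_m$ is proper. For strictness and $I_m\neq 0$, apply Schur functors to $V=[1,0]$: since $\End([d,0])=\C[S_d]$ is the honest group algebra (independent of $t$), the idempotent $e_\lambda\in\C[S_d]$ attached to a partition $\lambda$ of $d$ is genuine, so $S^\lambda V:=\im(e_\lambda)$ is a nonzero object and $F_m(S^\lambda V)=S^\lambda(\C^{t+m|m})$. By the hook-shape vanishing criterion for Schur functors of a supervector space, $S^\lambda(\C^{t+m|m})=0$ precisely when $\lambda_{t+m+1}>m$. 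Choosing $\lambda$ to be the rectangle with $t+m+1$ rows of length $m+1$ gives $F_m(1_{S^\lambda V})=0$ but $F_{m+1}(1_{S^\lambda V})=1_{S^\lambda(\C^{t+m+1|m+1})}\neq 0$, so $1_{S^\lambda V}\in I_m\setminus I_{m+1}$; in particular $I_m\neq 0$ and $I_m\supsetneq I_{m+1}$, yielding the infinite strictly decreasing chain. The step I expect to be the main obstacle is establishing that $\overline{F_m}$ is full onto its image — i.e.\ the super first fundamental theorem of invariant theory for $GL_{m|n}$ — together with the structural inputs about $\Rep GL_t$ (that $\End(V^{\otimes d})=\C[S_d]$, nonvanishing of $S^\lambda V$, and the hook behaviour of Schur functors on superspaces) needed to see that the chain does not terminate.
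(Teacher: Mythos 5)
The paper states this theorem without proof (it is quoted from \cite{C,CW}), and your argument is a correct reconstruction of essentially the approach of those references: define $I_m$ as the kernel of the interpolation functor classified by $\C^{t+m|m}$ via the universal property, get fullness from the super first fundamental theorem (Sergeev, Berele--Regev) and the $m=0$ equivalence from the classical FFT plus essential surjectivity, prove nestedness by restricting along $GL_{t+m|m}\hookrightarrow GL_{t+m+1|m+1}$, and detect strictness of the chain with the hook vanishing criterion for super Schur functors applied to the rectangles $((m+1)^{t+m+1})$. The only point you gloss over --- that $\Rep GL_t/I_m$ is itself Karoubian, so that the fully faithful $\overline{F_m}$ really lands as a pseudotensor subcategory --- is standard (idempotents lift up to conjugation modulo ideals of the finite-dimensional endomorphism algebras, and Krull--Schmidt applies).
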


The upshot is that the Deligne categories $\Rep GL_t$ with integer $t$ play a role in the representation theory of supergroups.

\subsection{The universal property and the abelian envelope of $\Rep GL_t$}
The category $\Rep GL_t$ has an important universal property which easily follows from the definition.

\begin{prop}\label{unipro} For any $t\in \Bbb C$, additive symmetric monoidal functors $F: \Rep GL_t \rightarrow \mathcal{C}$ to a Karoubian symmetric monoidal category $\mathcal{C}$ correspond to rigid objects in $\mathcal{C}$ of dimension $t$.   The correspondence is given by $F \leftrightarrow F([1, 0])$.
\end{prop}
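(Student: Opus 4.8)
The plan is to exhibit the correspondence in both directions and check it is bijective and natural. Given an additive symmetric monoidal functor $F\colon\Rep GL_t\to\mathcal C$, set $Y\coloneqq F([1,0])$. Since $\Rep GL_t$ is rigid with $[1,0]^*=[0,1]$ and the evaluation/coevaluation morphisms are specific walled Brauer diagrams, and since a symmetric monoidal functor preserves duals (as noted in the remark after the definition of monoidal functors), $Y$ is rigid in $\mathcal C$ with $Y^*\cong F([0,1])$. Moreover $\dim Y=\Tr(1_Y)$ is computed by applying $F$ to the composition $\mathbbm 1\xrightarrow{coev}[1,0]\otimes[0,1]\xrightarrow{c}[0,1]\otimes[1,0]\xrightarrow{ev}\mathbbm 1$ defining $\dim[1,0]$ in $\Rep GL_t$; since $F$ is monoidal and symmetric it sends this to the corresponding composition in $\mathcal C$, so $\dim Y=\dim[1,0]=t$ (the last equality being the computation $\dim V=t$ recorded in Subsection~\ref{prope}). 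This gives the map $F\mapsto F([1,0])$ into rigid objects of dimension $t$.

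For the inverse, start from a rigid object $Y\in\mathcal C$ with $\dim Y=t$. Define $F$ on objects by $F([r,s])\coloneqq Y^{\otimes r}\otimes (Y^*)^{\otimes s}$, extended to the Karoubian completion by sending a summand $(\mathrm{[r,s]},e)$ to $(F([r,s]),F(e))$ (legitimate since $\mathcal C$ is Karoubian). On morphisms, recall that $\Hom_{\Rep GL_t}([r,s],[p,q])$ has a basis of walled Brauer diagrams, and each such diagram is a composite of identities, braidings, and the evaluation/coevaluation maps for $[1,0]$; define $F$ on a diagram by replacing each of these structural pieces with the corresponding structural morphism in $\mathcal C$ built from $1_Y$, the braiding $c$ of $\mathcal C$, and $ev_Y,coev_Y$. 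The content to verify is that this is well-defined and functorial: one must check that the relations among walled Brauer diagrams — in particular that a contracted loop produces the scalar $t$ — are respected, and here the loop relation is exactly matched by $ev_Y\circ c\circ coev_Y=\Tr(1_Y)=\dim Y=t$, which is why the dimension hypothesis is precisely what is needed. Compatibility with composition (stacking diagrams) and with $\otimes$ (juxtaposing diagrams) then follows from the coherence (Mac Lane) and naturality axioms in $\mathcal C$, and one checks the hexagon for the tensor structure $J$ and the symmetry condition the same way.

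Finally I would verify the two composites are mutually inverse: starting from $F$, reconstructing from $Y=F([1,0])$ recovers $F$ because every object of $\Rep GL_t$ is built from $[1,0]$ by $\otimes$, $*$ and taking summands, and every morphism is a combination of structural morphisms, all of which $F$ preserves up to the coherence isomorphisms; starting from $Y$, applying the constructed $F$ to $[1,0]$ returns $Y$ on the nose. One should also note that isomorphic functors correspond to isomorphic objects and conversely, so the bijection descends to isomorphism classes as the statement intends.

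\emph{Main obstacle.} The real work is the well-definedness of $F$ on morphisms: one must pin down a presentation of the walled Brauer category by generators (identities, braidings, $ev_{[1,0]}$, $coev_{[1,0]}$) and relations, and confirm that the only ``numerical'' relation is loop-removal $=t$, so that the dimension hypothesis on $Y$ is exactly sufficient for $F$ to descend. Diagrammatically this is intuitive — stacking and loop-counting — but making it a clean argument requires either invoking the known diagrammatic presentation of $\widetilde{\Rep}\,GL_t$ or checking the coherence bookkeeping by hand. Everything else (rigidity transport, dimension computation, naturality) is a routine consequence of the symmetric monoidal axioms already set up in the excerpt.
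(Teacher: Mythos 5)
Your argument is correct and is exactly the intended one: the paper offers no written proof (it states that the proposition ``easily follows from the definition''), and the definition in question is precisely the diagrammatic construction of $\widetilde{\Rep}\ GL_t$ that you invoke, where Hom spaces are free on walled Brauer diagrams and composition is stacking with a factor of $t$ per contracted loop. Your identification of the one genuine check --- that the assignment of structural morphisms of $\mathcal C$ to diagrams is well defined and functorial, which reduces via symmetric monoidal coherence to matching the loop-removal relation with ${\rm ev}_Y\circ c\circ {\rm coev}_Y=\dim Y=t$ --- is the correct reading of why the dimension hypothesis is exactly what is needed.
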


Let us now discuss the abelian envelope of $\Rep GL_t$. We have seen that for $t \notin \Bbb Z$, the categories $\Rep GL_t$ are symmetric tensor categories (i.e., abelian), while for $t \in \Z$ they are only pseudotensor (i.e., Karoubian). We have also seen that in the latter case (when $t\ge 0$) we can quotient $\Rep GL_t$ by a tensor ideal $I_m$ and then embed the quotient into an (abelian) symmetric tensor category $\Rep GL_{t+m|m}(\Bbb C)$. This invites a natural question: can we embed $\Rep GL_t$ into an abelian symmetric tensor category {\bf faithfully}, i.e. without quotienting out any morphisms? And if yes, does there exist the universal such embedding, i.e., the {\bf abelian envelope} of $\Rep GL_t$? \footnote{Note that by the 2-categorical Yoneda lemma, such abelian envelope is unique if exists.}
\par
This was conjectured by Deligne in \cite{D2} and the answer turns out to be ``yes". This is proved in \cite{EHS} by constructing the abelian envelope of $GL_t$ as a suitable limit category, denoted $\Rep^{ab} GL_t$. More specifically, $\Rep^{ab} GL_t$ is a certain limit of $\Rep GL_{t+m|m}(\Bbb C)$ as $m \rightarrow \infty$. This envelope satisfies a universal property similar to one of Proposition \ref{unipro}: faithful additive symmetric monoidal functors $F:\Rep^{ab} GL_t \rightarrow \mathcal{C}$, where $\mathcal{C}$  is a symmetric tensor category, correspond to objects $V \in \mathcal{C}$ of dimension $t$ such that the Schur functors $S^{\lambda}V \neq 0$ for all partitions $\lambda$. Another general approach to constructing abelian envelopes which allows one to construct
$\Rep^{ab} GL_t$ is developed in \cite{Co}.

\subsection{Ultrafilters}
Ultrafilters are a tool from model theory that in particular is very useful for working with various interpolation categories both in zero and positive characteristic. It was used to study such categories by Deligne in \cite{D2} and then \cite{D3}, see also \cite{H}, 3.3.
For a review of ultrafilters and how they can be used to study interpolation categories see \cite{EKR,K} and references therein.

\begin{Def}[Ultrafilter]
  An {\bf utlrafilter} on a non-empty set $S$ is a collection $\mathcal{F}$ of subsets of $S$ satisfying the following axioms:

  \begin{enumerate}
    \item $\forall X \subset S$, exactly one of $X, X^c$ is in $\mathcal{F}$;
    \item if $X \in \mathcal{F}$ and $X \subset Y$, then $Y \in \mathcal{F}$;
    \item if $X, Y \in \mathcal{F}$, then $X \cap Y \in \mathcal{F}$.
  \end{enumerate}
\end{Def}

In particular, (1),(2) imply that 
$S \in \mathcal{F}$. 

In other words, an ultrafilter $\mathcal F$ on $S$ is the same thing as a unital ring homomorphism
$\chi_{\mathcal F}: {\rm Fun}(S,\Bbb F_2)\to \Bbb F_2$, or, equivalently, a maximal (=prime) ideal in ${\rm Fun}(S,\Bbb F_2)$. Namely, denoting by $1_X$ the indicator function for $X\subset S$, we have $X\in \mathcal{F}$ iff $\chi_{\mathcal F}(1_X)=1$. Indeed, axioms (1)-(3) take the following form in terms of $\chi=\chi_{\mathcal F}$: 

 \begin{enumerate}
    \item $\chi(a)+\chi(1-a)=1$; 
    \item If $\chi(a)=1$ and $ab=a$ then $\chi(b)=1$;
    \item If $\chi(a)=\chi(b)=1$ then $\chi(ab)=1$. 
  \end{enumerate}

  Clearly, any unital homomorphism satisfies these properties, and vice versa.\footnote{Indeed, let $\chi(a)=\chi(b)=1$. Then by (3)
$\chi(ab)=1$. But all elements of ${\rm Fun}(S,\Bbb F_2)$ are idempotents. Thus $ab(1-a-b)=ab$. So by (2), $\chi(1-a-b)=1$. Thus by (1), 
$\chi(a+b)=0$. Replacing $a$ by $1-a$ and/or $b$ with $1-b$, we thus get  
$\chi(a+b)=\chi(a)+\chi(b)$ for all $a,b$. Similarly, from (3) we get in the same way that $\chi(ab)=\chi(a)\chi(b)$ for all $a,b$, i.e., $\chi$ is a ring homomorphism which is unital by (1), as claimed.} 

The intuitive way to understand this definition is that we designate subsets of $S$ as consisting of {\bf almost everything} if they lie in $\mathcal{F}$ or {\bf almost nothing} otherwise.

A simple example is a {\bf principal ultrafilter}: if $s \in S$, we can define the ultrafilter $\mathcal{F}_s = \{X \subseteq S | s \in X\}$, with $\chi_{\mathcal F_s}(a)=a(s)$. These are boring, but clearly are the only possible ultrafilters for {\bf finite} sets. In contrast, Zorn's lemma (i.e., axiom of choice) implies that any {\bf infinite} set $S$ admits a {\bf non-principal ultrafilter}. Since its existence is proved non-constructively,\footnote{Namely, let ${\rm Fun}_0(S,\Bbb F_2)\subset {\rm Fun}(S,\Bbb F_2)$ be the ideal of functions with finite support, proper when $S$ is infinite. Then non-principal ultrafilters on $S$ correspond to maximal ideals in ${\rm Fun}(S,\Bbb F_2)$ containing ${\rm Fun}_0(S,\Bbb F_2)$ (and such ideals exist by Zorn's lemma).}
there can be no explicit construction of such an ultrafilter. However, the mere existence of $\mathcal F$ enables various powerful constructions using the so-called {\bf ultraproduct} construction discussed below.

Given a non-principal ultrafilter $\mathcal F$ on $S$, we say
that a certain statement holds for {\bf almost all} $x\in S$ with respect to $\mathcal F$ if it holds for all $x\in X$ where $X\in \mathcal F$. It is easy to see
that complements of finite sets are all in $\mathcal{F}$, so
if the statement holds for almost all $x\in S$ in the usual sense
then it does so with respect to $\mathcal F$, but not vice versa, in general.
In this section, when we say ``for almost all $x$'' we will always mean ``for almost all $x$ with respect to a fixed $\mathcal F$".

\begin{Def}[Ultraproduct]
Let $\mathcal{F}$ be a non-principal ultrafilter on $\mathbb{N} = \{1, 2, \dots\}$, and let $X_1, X_2, \dots $ be a sequence of sets. The \textbf{ultraproduct} with respect to the ultrafilter $\mathcal{F}$ is denoted as $\prod_{\substack{i \in \mathbb{N} \\ \mathcal{F}}} X_i$. The elements of the ultraproduct are defined as equivalence classes of sequences $\{x = (x_1, x_2, \dots)\}$ with $x_i \in X_i$ (similar to the usual product of the $X_i$), with the caveat that $x$ only needs to be defined for almost all (not necessarily all) $i$, and the equivalence relation is given by $x \sim x'$ if $x_i = x_i'$ for almost all $i$.
\end{Def}
As an example, if $X$ is a finite set, then $\prod_{\substack{i \in \mathbb{N} \\ \mathcal{F}}} X = X$, as we can take the equivalence class representatives to be $s_x = (x, x, x, \dots )$ for each $x \in X$. Indeed, if $s$ is some element in the ultraproduct, then let $I_x(s)$ denote the set of indices at which $x\in X$ appears. Then, exactly one of the $I_x(s)$'s must be in $\mathcal{F}$. This is not true, however, for an infinite set $X$; in this case the ultraproduct contains $X$ but is much larger.
\par
A very useful property of the ultraproduct is that if almost all $X_i$ share a structure or property expressible in terms of {\bf first-order logic} (informally, this means that it does not use the notion of {\bf size}, in particular, {\bf finiteness}), then it is inherited by  $\prod_{\substack{i \in \mathbb{N} \\ \mathcal{F}}} X_i$. For instance, ultraproducts of groups/rings/fields are groups/rings/fields, respectively (with pointwise addition/multiplication).
\par
For example, consider  $\prod_{\substack{i \in \mathbb{N} \\ \mathcal{F}}} \overline{\mathbb{Q}}$. This ultraproduct will be an algebraically closed field of characteristic $0$ (as these are first order properties/structures shared by all the factors) with cardinality of the continuum. But by Steinitz's theorem, any such field is isomorphic (non-canonically) to $\mathbb{C}$. Thus $\prod_{\substack{i \in \mathbb{N} \\ \mathcal{F}}}\overline{\mathbb{Q}}\cong \Bbb C$.
We can also consider a non-principal ultrafilter $\mathcal{F}$ on the set of all primes, and define the ultraproduct $\prod_{\substack{p \ \mathrm{prime} \\ \mathcal{F}}} \overline{\mathbb{F}_p}$ of algebraically closed fields of characteristic $p$. This will again be a field of characteristic $0$ of cardinality of the continuum. Indeed for every $p$ all but one (thus, almost all) factors have the first order property of having characteristic $\ne p$, hence so does the ultraproduct. Thus we again have a non-canonical isomorphism
$\prod_{\substack{p \ \mathrm{prime} \\ \mathcal{F}}} \overline{\mathbb{F}_p}=\Bbb C$.
The same holds for the ultraproduct over any infinite set of primes with respect to some non-principal ultrafilter on this set.

\subsection{The ultraproduct construction of $\Rep GL_t$ for transcendental $t$}
We can similarly define ultraproducts of (essentially small) categories.

For example, consider   $ \mathcal{C} \cong \prod_{\substack{n \in \mathbb{N} \\ \mathcal{F}}} \Rep GL_n(\overline{\mathbb{Q}})$. This is a rigid symmetric monoidal category linear over the field $\prod_{\substack{i \in \mathbb{N} \\ \mathcal{F}}} \overline{\mathbb{Q}} \cong \mathbb{C}$. However, this is not a symmetric tensor category, as, for instance, the artinian condition may fail (indeed, this is not a first order condition, as it requires objects to have finite length!). Similarly, Hom spaces in this category need not be finite dimensional.

 Nevertheless, we can take a suitable artinian subcategory with finite dimensional Hom spaces as follows. Let $V = (V_1, V_2, \dots )$, where $V_n$ is the natural representation of $GL_n(\overline{\mathbb{Q}})$. Then $V$ is an object in $\mathcal{C}$, so let $\langle V\rangle$ be the full subcategory tensor generated by $V$ (i.e., the full subcategory comprised by subquotients of direct sums of tensor products of $V$ and $V^*$). This subcategory 
 is artinian, since the dimension of 
 ${\rm Hom}_{
 GL_n(
 {\mathbb{\bar Q})}}
 ((\overline{\Bbb Q}^n)^{\otimes r},
 (\overline{\Bbb Q}^n)^{\otimes s})
 $
 stabilizes as $n\to \infty$. 
 Moreover, we have the following theorem due to Deligne, \cite{D2}:

\begin{thm}
  The category $\langle V\rangle$ is equivalent to $\Rep GL_t$, where $t$ is the image in $\Bbb C$ of the element  $(1, 2, 3, \dots) $ of the ultraproduct $\prod_{\substack{i \in \mathbb{N} \\ \mathcal{F}}} \overline{\mathbb{Q}}$ under the isomorphism $\xi: \prod_{\substack{i \in \mathbb{N} \\ \mathcal{F}}} \overline{\mathbb{Q}} \cong \mathbb{C}$.
\end{thm}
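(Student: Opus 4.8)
The plan is to identify the ultraproduct category $\langle V\rangle$ with $\Rep GL_t$ by matching them on morphisms of the defining objects $[r,s]$ and then invoking Karoubian completion. First I would observe that, since $\langle V\rangle$ is Karoubian (ultraproducts of idempotent-complete categories are idempotent-complete, and direct sums exist componentwise) and tensor generated by $V$, it suffices to produce a symmetric monoidal equivalence between the full subcategory of $\langle V\rangle$ on objects $[r,s]:=V^{\otimes r}\otimes (V^*)^{\otimes s}$ and $\widetilde{\Rep}\,GL_t$, and then pass to Karoubian completions on both sides; by the uniqueness of Karoubian completion this will give the desired equivalence $\langle V\rangle\simeq \Rep GL_t$. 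So the crux is a computation of $\Hom_{\langle V\rangle}([r,s],[p,q])$.

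Next I would compute these Hom spaces using the basic principle that Hom spaces in an ultraproduct of categories are the ultraproducts of the Hom spaces of the factors (this is again a first-order phenomenon: a morphism is a sequence of morphisms $\phi_n\colon V_n^{\otimes r}\otimes(V_n^*)^{\otimes s}\to V_n^{\otimes p}\otimes(V_n^*)^{\otimes q}$ defined for almost all $n$, modulo agreement for almost all $n$). By the discussion preceding Proposition \ref{semi}, for $n\gg 0$ (in particular for almost all $n$) the space $\Hom_{GL_n(\overline{\Q})}([r,s],[p,q])$ has a fixed basis of walled Brauer diagrams, independent of $n$, and composition is given by the stacking rule with a factor of $n$ per closed loop. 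Hence the ultraproduct of these Hom spaces is a free module over $\prod_{\mathcal F}\overline{\Q}\cong\C$ on the same set of walled Brauer diagrams, with composition given by the same stacking rule but with each closed loop contributing the ultraproduct of the constant-ish sequence $(1,2,3,\dots)$, i.e.\ contributing exactly $t=\xi((1,2,3,\dots))$. The same goes for the tensor product (juxtaposition of diagrams) and the symmetry (crossing of strands), which are given by $n$-independent formulas on the nose. This is precisely the definition of $\widetilde{\Rep}\,GL_t$, so the assignment $[r,s]\mapsto [r,s]$ extends to a symmetric monoidal equivalence of the diagrammatic subcategories.

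The main obstacle, and the step requiring the most care, is the bookkeeping around ``almost all $n$'' and the stabilization claim: one must check that the walled Brauer diagram basis and the monomial composition formula are valid not just asymptotically but for a set of $n$ lying in $\mathcal F$ (automatic, since cofinite sets lie in $\mathcal F$), that the relation-ideal $I$ appearing in $\C[S_d]/I$ vanishes for almost all $n$ (true since $I=0$ once $d\le n$, again a cofinite condition), and that the resulting category is genuinely artinian with finite-dimensional Hom spaces — which is exactly the stabilization of $\dim\Hom_{GL_n(\overline{\Q})}((\overline{\Q}^n)^{\otimes r},(\overline{\Q}^n)^{\otimes s})$ noted in the construction of $\langle V\rangle$, so the ultraproduct Hom spaces are finite-dimensional of the expected dimension. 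Once these points are in place, the identification of the loop parameter with $t$ is immediate from the definition of $\xi$, and rigidity ($[r,s]^*=[s,r]$) together with $\End(\mathbbm 1)=\C$ transfer automatically. Finally, faithfulness of the construction — that no nonzero morphism of $\widetilde{\Rep}\,GL_t$ dies in $\langle V\rangle$ — follows because the Hom spaces literally have the same dimension on both sides, so the monoidal functor $[r,s]\mapsto[r,s]$ is fully faithful, hence an equivalence after Karoubian completion.
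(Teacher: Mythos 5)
The paper does not actually prove this theorem --- it is quoted from Deligne \cite{D2} --- so there is no in-paper argument to compare against; I will assess your proposal on its own terms. Your strategy is the standard one and is essentially right: Hom spaces in the ultraproduct are ultraproducts of Hom spaces; for each fixed $(r,s,p,q)$ these are, for all but finitely many $n$ (hence for a set in $\mathcal F$), free on the same finite set of walled Brauer diagrams, with composition, tensor product and braiding given by $n$-independent diagrammatic rules except for the factor of $n$ per contracted loop, which becomes $\xi((1,2,3,\dots))=t$ in the ultraproduct. This gives a fully faithful symmetric monoidal functor $\widetilde{\Rep}\,GL_t\to\langle V\rangle$ sending $[r,s]$ to $[r,s]$.

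The one step you assert without justification is the reduction ``pass to Karoubian completions on both sides.'' By the paper's definition, $\langle V\rangle$ is the full subcategory of the ultraproduct consisting of \emph{subquotients} of direct sums of the objects $[r,s]$, not merely of direct summands, so full faithfulness on the diagram category plus idempotent completion only yields an equivalence of $\Rep GL_t$ with the Karoubian closure of the image, which could a priori be a proper subcategory of $\langle V\rangle$. To close this you need semisimplicity: $t$ is transcendental, so $t\notin\Bbb Z$ and the walled Brauer algebras $W_{r,s}(t)=\End([r,s])$ are semisimple by Proposition \ref{semi}; moreover each indecomposable summand of $[r,s]$ cut out by a primitive idempotent is, for almost all $n$, an irreducible $GL_n(\overline{\Bbb Q})$-module, and a subobject of an ultraproduct object is represented by a sequence of subobjects of the components for almost all $n$, so these summands are simple in the ambient abelian category. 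Hence every subquotient of a direct sum of $[r,s]$'s is already a direct summand, and $\langle V\rangle$ coincides with the Karoubian completion of the diagrammatic subcategory. With that paragraph added, your argument is complete.
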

Note that $t$ is a transcendental number, since there is no nontrivial algebraic equation over $\Bbb Q$ satisfied by infinitely many positive integers. Note also that $t$ depends on the isomorphism $\xi$; indeed, if $\xi$ is replaced by its composition with an automorphism
of $\Bbb C$ sending $t$ to $t'$ then $t$ is replaced by $t'$. It is well known that such an automorphism exists for any $t'$, which shows that for any fixed $\mathcal F$
we obtain a new construction of $\Rep GL_t$ for all transcendental $t\in \Bbb C$.

\subsection{Generalization to algebraic $t$}
This can also be done for algebraic $t\in \Bbb C$, but then we have to resort to characteristic $p$ (see \cite{H2}). Namely, let $q$ be the minimal (monic) polynomial of $t$, so $q$ is irreducible over $\Bbb Q$ and $q(t) = 0$. We will need the following well known and simple lemma from elementary number theory:

\begin{lem}
  There exist infinitely many primes $p$ such that there exists an integer $n_p \in \mathbb{N}$ satisfying $q(n_p) = 0$ modulo $p$.
\end{lem}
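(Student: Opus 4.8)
The plan is to deduce this from Chebotarev's density theorem (or, more elementarily, from the behavior of prime factorizations of values of an integer polynomial). First I would reduce to a statement about a fixed polynomial with integer coefficients: since $q$ is monic with rational coefficients, after clearing denominators we may replace $q$ by an integer multiple, so without loss of generality assume $q \in \Z[x]$ is irreducible of degree $d = \deg q \geq 1$. The claim is then that the set of primes $p$ for which $q$ has a root modulo $p$ is infinite.

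The cleanest route is to observe that a prime $p$ has the property that $q$ has a root mod $p$ precisely when $q$ has a linear factor in $\mathbb{F}_p[x]$, which (for all but the finitely many primes dividing the discriminant of $q$ or its leading coefficient) happens exactly when $p$ splits in a way that produces a degree-one prime in the number field $\KK = \Q[x]/(q(x))$; equivalently, $p$ splits completely in some conjugate subfield. By Chebotarev (or already by the weaker statement that infinitely many primes split completely in the Galois closure of $\KK$, which follows from the fact that $L$-functions do not vanish, or even more elementarily from the classical argument below), the set of such primes has positive density, hence is infinite. Then for each such $p$ one picks $n_p \in \{0, 1, \dots, p-1\}$ to be a representative of the root, which exists by construction.

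If one wants to avoid invoking Chebotarev, the standard elementary argument runs as follows. Suppose for contradiction that only finitely many primes $p_1, \dots, p_k$ have the property that $q(n) \equiv 0 \pmod{p}$ for some $n$. Consider the values $q(N \cdot p_1 \cdots p_k \cdot c)$ for suitable integers $N$, where $c$ handles the leading coefficient and constant term; by choosing $N$ large one makes $|q(\cdot)|$ exceed any bound, and a short manipulation shows that the value must then be divisible by a prime outside $\{p_1, \dots, p_k\}$ at which $q$ acquires a root, a contradiction. The one genuinely necessary hypothesis is that $q$ is nonconstant, which holds since $t$ is algebraic of degree $\geq 1$; the irreducibility of $q$ is not actually needed for the statement, only that $q$ has a root in each such $\mathbb{F}_p$.

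The main obstacle, such as it is, is purely bookkeeping: one must correctly handle the leading coefficient of $q$ (so that ``root mod $p$'' is not spoiled for primes dividing it) and the constant term (to rule out the trivial case $q(0) = \pm 1$ where the elementary Euclid-style argument needs a mild twist). Neither is deep. Since the excerpt explicitly flags this as ``well known and simple,'' I would present the elementary Euclid-style proof in a few lines rather than the Chebotarev proof, remarking only parenthetically that Chebotarev gives the stronger quantitative statement that these primes have density equal to the proportion of elements of the Galois group of $q$ fixing a root.
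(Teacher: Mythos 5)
The paper does not actually supply a proof of this lemma---it is stated as ``well known and simple'' and immediately used---so there is nothing to compare against except the standard folklore argument, which is exactly what you give. Your elementary Euclid-style proof is correct and complete in outline: after clearing denominators to get a nonconstant $\tilde q\in\Z[x]$ (discarding the finitely many primes dividing the denominators), one assumes the primes dividing some value $\tilde q(n)$ are exactly $p_1,\dots,p_k$, evaluates at multiples of $c\,p_1\cdots p_k$ where $c=\tilde q(0)\neq 0$, and extracts a factor congruent to $1$ modulo each $p_i$ that exceeds $1$ in absolute value, yielding a new prime; any prime $p$ dividing $\tilde q(n)$ gives the required root $n_p\equiv n\pmod p$. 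Your Chebotarev remark is also correct (the relevant density is the proportion of elements of the Galois group fixing a root, which is positive), but it is overkill for what the paper needs; the elementary argument is the right one to record here, and your handling of the leading coefficient, the constant term, and the non-integrality of $q$'s coefficients covers all the edge cases.
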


Denote the set of such primes $\Bbb S_q$ and for $p\in \Bbb S_q$ choose $n_p$ so that $n_p\to \infty$ as $p\to \infty$ in $\Bbb S_q$. The set $\Bbb S_q$ admits a non-principal ultrafilter $\mathcal{F}$. Consider the ultraproduct $\prod_{\substack{p \in \Bbb S_q \\ \mathcal{F}}} \overline{\mathbb{F}_p}$, which as we know is isomorphic to $\mathbb{C}$. Now, the sequence $(n_p, p\in \Bbb S_q)$  (where $n_p$ is reduced mod $p$) is mapped to some complex number $s \in \mathbb{C}$, which this time is not transcendental but rather is a root $q$ (as this is so for all $p\in \Bbb S_q$). Since the automorphisms of $\mathbb{C}$ act transitively on the roots of $q$ over $\mathbb{Q}$, the isomorphism $\prod_{\substack{p \in \Bbb S_q \\ \mathcal{F}}} \overline{\mathbb{F}_p}\cong \Bbb C$
 can be chosen so that $s=t$.
\par
Therefore, if $t \not \in \Z$ then $\Rep GL_t$ is the subcategory of the ultraproduct tensor generated by $V$:
$$
\Rep GL_t = \langle V \rangle \subseteq  \prod_{\substack{p \in \Bbb S_q \\ \mathcal{F}}} \Rep GL_{n_p}(\overline{\mathbb{F}_p}),
$$
 where $V = (V_{n_p}, p\in \Bbb S_q)$ is the collection of vector representations, similar to above. On the other hand, if $t\in \Bbb Z$ (in which case $\Bbb S_q$ is the set of all primes and we can take $n_p=p+t$), one can show using the methods of \cite{H2} that $\langle V\rangle$ is an artinian category with finite dimensional Hom spaces, hence a symmetric tensor category (namely, we have to show that the length of $V_p^{\otimes k}\otimes V_p^{*\otimes l}$ is bounded as $p\to \infty$ for fixed $k,l$, which can be checked by methods of modular representation theory). Then the results of \cite{EHS} imply that
 $\langle V\rangle$ is nothing but the familiar abelian envelope $\Rep^{\rm ab} GL_t$.
So this gives another construction of this abelian envelope.

\subsection{Generalization to Positive Characteristic}
We can perform similar constructions in positive characteristic, but the story is going to be trickier because the walled Brauer algebra is no longer semisimple, so the diagrammatic construction we used in characteristic zero will not produce a symmetric tensor category (only a pseudotensor one). This is where the ultraproduct construction will really shine.

We start by fixing a prime $p$ and considering the ultraproduct $\prod_{\substack{n \in \mathbb{N} \\ \mathcal{F}}} \Rep GL_n(\overline{\mathbb{F}_p})$. This is a symmetric tensor category over $\KK = \prod_{\substack{n \in \mathbb{N} \\ \mathcal{F}}} \overline{\mathbb{F}_p}$, which is some (uncountable) algebraically closed field of characteristic $p$.\footnote{In fact, as explained in \cite{D3}, the whole construction can be done over $\Bbb F_p$, which produces a category also over $\Bbb F_p$. Then we can tensor with any algebraically closed field of characteristic $p$. So the fact that $\KK$ is uncountable is not important here.} The object $V=(V_1,V_2,...)$ consisting of vector representations generates a full symmetric tensor subcategory $\langle V\rangle$ in this ultraproduct (\cite{D3,H}), and $\dim V=t_0=(1,2,3,...)\in \Bbb K$. Namely, $t_0\in \Bbb F_p$ is the value taken by $n$ modulo $p$ for almost all $n$; it depends on the choice of $\mathcal F$.

It turns out that, unlike the characteristic zero case,  even if the dimension $t_0$ of $V$ is fixed, the category $\langle V\rangle$ is still not uniquely determined and depends on the choice of the ultrafilter $\mathcal{F}$. At first sight this is bad news, since $\mathcal{F}$ is a quintessentially non-constructive object. It turns out, however, that this dependence can be expressed in terms of some explicit invariants of $\mathcal{F}$, so that the nonconstructive nature of $\mathcal{F}$ is not really an issue.
To describe these invariants, we need to discuss the differences between
dimensions of objects in symmetric tensor categories in zero and positive characteristic.

In characteristic zero, as we have seen on the example of Deligne categories, the dimension of an object $X$ can be any element $t\in \KK$. However, once this
number is known, the dimensions of exterior powers $\wedge^n X$ are all determined
from the formula
$$
\dim \wedge^n X=\binom{t}{n}=\frac{t(t-1)...(t-n+1)}{n!}.
$$
The characteristic $p$ situation differs from this in two important respects. First of all, the dimension
can no longer be any element in $\KK$. In fact, we have the
following lemma.

\begin{lem}\label{infp}
If $\mathcal{C}$ is a symmetric tensor category\footnote{This Lemma holds, with the same proof, for any symmetric pseudotensor category in which the trace of any nilpotent endomorphism is zero.} over a field $\KK$ of characteristic $p > 0$, then for any object $X \in \mathcal{C}$ its dimension $\dim X$ lies in the prime subfield $\mathbb{F}_p$ of $\KK$.  
\end{lem}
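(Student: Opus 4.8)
The plan is to exploit the fact that in characteristic $p$ the symmetric group $S_p$ has a large $p$-divisible part, together with the vanishing of traces of nilpotents, to squeeze $\dim X$ into $\mathbb F_p$. The key observation is the following: in a symmetric tensor category, the object $X^{\otimes p}$ carries an action of the cyclic group $\Z/p$ generated by the $p$-cycle $\sigma = c_{1,2}c_{2,3}\cdots c_{p-1,p}$, and the trace of $\sigma$ acting on $X^{\otimes p}$ equals $\dim X$. Indeed, by the multiplicativity of traces under $\otimes$ and the compatibility of the braiding with the coevaluation/evaluation maps, contracting $X^{\otimes p}$ against its dual while inserting the cyclic permutation collapses the $p$ strands into a single loop, giving $\mathrm{Tr}_{X^{\otimes p}}(\sigma) = \mathrm{Tr}_X(1_X) = \dim X$. (This is the standard ``trace of a cyclic permutation on a tensor power'' computation, valid in any symmetric monoidal category with $\End(\mathbbm 1)=\KK$.) Similarly, $\mathrm{Tr}_{X^{\otimes p}}(1) = (\dim X)^p$.

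Next I would bring in the group algebra of $\Z/p$ in characteristic $p$. Since $\KK$ has characteristic $p$, we have $\sigma^p = 1$ and hence $(\sigma - 1)^p = \sigma^p - 1 = 0$ in $\End(X^{\otimes p})$; that is, $\sigma - 1$ is a nilpotent endomorphism of $X^{\otimes p}$. By the hypothesis (or its footnoted extension to pseudotensor categories in which nilpotents are traceless — which holds here because the trace is a morphism $\mathbbm 1 \to \mathbbm 1$ and any category of the stated type has this property), we get
\[
\mathrm{Tr}_{X^{\otimes p}}(\sigma - 1) = 0,
\]
so $\mathrm{Tr}_{X^{\otimes p}}(\sigma) = \mathrm{Tr}_{X^{\otimes p}}(1) = (\dim X)^p$. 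Combining this with the first step yields $\dim X = (\dim X)^p$ in $\KK$, i.e. $(\dim X)^p = \dim X$. The elements of $\KK$ satisfying $a^p = a$ are exactly the roots of $x^p - x = \prod_{a \in \mathbb F_p}(x-a)$, which are precisely the elements of the prime subfield $\mathbb F_p$. Hence $\dim X \in \mathbb F_p$, as claimed.

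The main obstacle — really the only nontrivial point — is the clean identification $\mathrm{Tr}_{X^{\otimes p}}(\sigma) = \dim X$ for a $p$-cycle $\sigma$, and more generally the claim that nilpotent endomorphisms are traceless (needed to justify applying the hypothesis to $\sigma - 1$, and needed in the pseudotensor generalization). The first is a diagrammatic computation: one draws the coevaluation on $X^{\otimes p} \otimes (X^{\otimes p})^*$, applies the permutation $\sigma$, braids across, and evaluates; the cyclic permutation glues the $p$ input-output pairs into one closed loop labelled $X$, whose value is $\mathrm{Tr}_X(1_X)$. This should be spelled out carefully (or cited), since it is where the specific combinatorics of a $p$-cycle, as opposed to an arbitrary permutation, enters. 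The second point — traces of nilpotents vanish — follows because in an artinian symmetric tensor category every nilpotent endomorphism lies in the kernel of the trace form (e.g. it factors through a strictly smaller filtration step, or one uses additivity of trace on short exact sequences together with $\mathrm{Tr}(f) = \mathrm{Tr}(f^2)=\cdots$ forcing $\mathrm{Tr}(f)=0$ when $f^N=0$); this is routine once one has the properties of the trace listed earlier.
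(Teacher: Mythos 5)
Your proposal is correct and follows essentially the same route as the paper: take the cyclic permutation $\sigma$ on $X^{\otimes p}$, note $(1-\sigma)^p=0$ so $1-\sigma$ is nilpotent and hence traceless (via the filtration by kernels of its powers and additivity of trace on exact sequences), and conclude $d^p-d=\mathrm{Tr}(1)-\mathrm{Tr}(\sigma)=0$. The only point to tidy is your parenthetical alternative ``$\mathrm{Tr}(f)=\mathrm{Tr}(f^2)=\cdots$'', which is not a valid identity; the filtration argument you also give is the correct (and the paper's) justification.
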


\begin{proof} Let $\sigma: X^{\otimes p}\to X^{\otimes p}$ be the cyclic permutation.
Then $\sigma^p=1$, hence  $(1-\sigma)^p=0$. Thus the morphism $A:=1-\sigma: X^{\otimes p}\to X^{\otimes p}$ is nilpotent. Hence ${\rm Tr}(A)=0$ (as there is a filtration of $X^{\otimes p}$ by kernels of powers of $A$ which is strictly preserved by $A$). But on
the other hand ${\rm Tr}(A)={\rm Tr}(1)-{\rm Tr}(\sigma)=d^p-d$, where $d=\dim X$.
Thus $d^p-d=0$, i.e., $d\in \Bbb F_p$.
\end{proof}

The second difference is that binomial coefficients $\binom{d}{n}$
do not make sense for $n\ge p$, and in fact the dimensions of $\wedge^n X$
are not determined by $\dim X$. More precisely, the situation is as follows (see \cite{EHO}). Let $\dim X=t_0$. Then for
$n<p$ we have the usual formula
$\dim \wedge^n X=\binom{t_0}{n}$,
but for $n=p$ this formula no longer makes sense because of vanishing denominator, and in fact $\dim\wedge^p X$ is a new parameter $t_1$, independent of $t_0$,
which by Lemma \ref{infp} also belongs to $\Bbb F_p$.
The numbers $t_0$, $t_1$ then determine
$\dim \wedge^n X$ for $n<p^2$, but
$\dim \wedge^{p^2} X=t_2\in \Bbb F_p$
is a new parameter independent on $t_0,t_1$, and so on.
The sequence $t_0,t_1,t_2...$ can be viewed as the sequence
of digits of a $p$-adic integer $t=....t_2t_1t_0\in \Bbb Z_p$, which is called the
{\bf $p$-adic dimension} of $X$, denoted ${\rm Dim}(X)$.
Moreover, it is shown in \cite{EHO} that the dimensions
of all exterior powers of $X$ can be computed by the formula
$$
\sum_{n=0}^\infty \dim (\wedge^n X) z^n=(1+z)^t,
$$
where
$$
(1+z)^t:=(1+z)^{t_0}(1+z^p)^{t_1}(1+z^{p^2})^{t_2}...
$$
The motivation for this notation is that the map
$t\mapsto (1+z)^t$ is a continuous group homomorphism
$\Bbb Z_p\to \Bbb F_p[[z]]^\times$.

Now we can come back to Deligne categories.

\begin{thm} (\cite{D3,H})
The category $\langle V\rangle$ depends only the $p$-adic integer $t$ (and is otherwise independent of $\mathcal F$).
\end{thm}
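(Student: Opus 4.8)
The plan is to show that the equivalence class of the subcategory $\langle V\rangle$ inside the ultraproduct is controlled entirely by the $p$-adic dimension $t = \mathrm{Dim}(V) \in \Z_p$, so that two ultrafilters yielding the same $p$-adic integer give equivalent categories. First I would recall the universal property of Deligne-type categories (the analogue of Proposition \ref{unipro} together with the abelian-envelope universal property from \cite{EHS,Co}): a symmetric monoidal category tensor-generated by a single rigid object is determined by the ``combinatorial data'' of that object, namely the dimensions of all its Schur functors, equivalently (in characteristic $p$) the $p$-adic dimension $t$ via the generating function $\sum_n \dim(\wedge^n X) z^n = (1+z)^t$. So the strategy is: (i) extract from $\langle V\rangle$ the invariant $t\in\Z_p$; (ii) show that $t$ determines all the numerical invariants (dimensions of Schur functors $S^\lambda V$, composition series multiplicities, Hom-space dimensions); and (iii) conclude that any two ultrafilters producing the same $t$ give symmetric tensor equivalent $\langle V\rangle$, presumably by exhibiting a monoidal functor between them using a universal property and checking it is fully faithful and essentially surjective.

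The key steps, in order, are as follows. First, pin down what $t = \mathrm{Dim}(V)$ is in terms of $\mathcal F$: the digit $t_k$ is the $\mathcal F$-limit of $\binom{n}{p^k}/(\text{lower digit corrections}) \bmod p$, or more cleanly, $t$ is the $\mathcal F$-limit in $\Z_p$ of the sequence of integers $n$ (each $n\in\N$ embedded in $\Z_p$), since $\Z_p$ is compact Hausdorff and $\mathcal F$ gives every sequence in a compact space a unique limit. Second, observe that for fixed $r,s$ the dimension $\dim \mathrm{Hom}_{GL_n}(V_n^{\otimes r}, V_n^{\otimes s})$ is, for $n$ large, given by a fixed expression (a count of certain diagrams / a polynomial-type count in the Brauer-algebra sense combined with modular corrections), which stabilizes, and that the multiplication and all the structure constants of these Hom-algebras, once $n$ is large, depend on $n$ only through congruence data that is captured by the $p$-adic integer $t$ — this is exactly the content imported from \cite{EHO} on $p$-adic dimensions and from \cite{D3,H}. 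Third, package this: the $\mathcal F$-ultraproduct of the tower $(\mathrm{End}_{GL_n}(V_n^{\otimes r}\otimes V_n^{*\otimes s}))_n$, being a first-order construction, yields an algebra over $\KK$ whose isomorphism type depends only on the stable structure constants, hence only on $t$. Fourth, assemble the categories: given $\mathcal F, \mathcal F'$ with the same $t$, build a $\KK$-linear (resp.\ $\KK'$-linear, then base-change) symmetric monoidal functor $\langle V\rangle_{\mathcal F} \to \langle V\rangle_{\mathcal F'}$ sending generator to generator, defined on Hom spaces by the identification of structure constants, and verify fully faithfulness (Hom spaces match by step two) and essential surjectivity (both are Karoubian/abelian completions of the same diagrammatically described category).

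The main obstacle I expect is step two made precise: showing that the \emph{entire} tower of Hom-spaces — not just their dimensions but the composition maps, the braiding, the evaluation/coevaluation, and crucially the submodule structure (which controls the Karoubian and abelian envelopes, i.e.\ which idempotents split and which simple objects appear) — is determined by $t\in\Z_p$ and nothing else about $\mathcal F$. In characteristic $p$ the walled Brauer algebra and $\mathrm{End}_{GL_n}(V_n^{\otimes d})$ are not semisimple, so ``dimension count'' is not enough; one needs that the decomposition numbers and the lattice of submodules of tensor powers of $V_n$, for $n$ in an $\mathcal F$-large set, agree whenever the $p$-adic limits of $n$ agree. This is genuinely a statement in modular representation theory of $GL_n$ (and of Schur algebras), and the honest argument routes through the structure of tilting modules and the known description (from \cite{D3,H2,EHO}) of how these stabilize $p$-adically; I would cite those and emphasize that the ultraproduct merely transports the $p$-adic stabilization into an honest equivalence. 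A secondary, more bookkeeping obstacle is the base-field issue: the ultraproducts over $\mathcal F$ and $\mathcal F'$ give different (though abstractly isomorphic) algebraically closed fields $\KK, \KK'$ of characteristic $p$, so the comparison functor must be set up as a $\KK$-linear equivalence after choosing a field isomorphism $\KK\cong\KK'$ compatible with $t_0\in\F_p$ — harmless since everything is already defined over $\F_p$ as noted in the footnote, but it should be stated.
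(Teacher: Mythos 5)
The paper itself offers no proof of this theorem: it is stated as a result imported from \cite{D3,H}, so there is no internal argument to compare yours against. Judged on its own terms, your write-up is a plausible roadmap but it has a genuine gap, and one of its two proposed routes is explicitly blocked by the paper. The blocked route is your step (iii) via a universal property: the remark immediately following the theorem states that a universal characterization of $\Rep^{\rm ab}GL_t$ for $t\in\Bbb Z_p$ ``is not available thus far, even as a precise conjecture,'' so you cannot conclude an equivalence by appealing to the characteristic-$p$ analogue of Proposition \ref{unipro} or of the abelian-envelope universal property from \cite{EHS,Co} (those are characteristic-zero statements).

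The viable route is the one in your second paragraph, but there the essential step is exactly the one you defer: you need that for fixed $r,s,k$ there is an $N$ such that whenever $n,n'\ge N$ and $n\equiv n'\pmod{p^k}$, the full subcategories of $\Rep GL_n(\overline{\Bbb F}_p)$ and $\Rep GL_{n'}(\overline{\Bbb F}_p)$ on subquotients of $V^{\otimes r}\otimes V^{*\otimes s}$ are compatibly equivalent --- not merely that Hom-dimensions or structure constants agree, but that decomposition numbers, the lattice of submodules, and the splitting of idempotents agree. You correctly identify this as the main obstacle, but then you resolve it only by citing \cite{D3,H2,EHO}; since this $p$-adic stability-and-periodicity statement \emph{is} the theorem (the ultraproduct formalism merely packages it), the proposal as written is a reduction of the theorem to itself plus references rather than a proof. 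Your peripheral points are fine: $t$ is indeed the $\mathcal F$-limit of $(1,2,3,\dots)$ in the compact space $\Bbb Z_p$, and the base-field issue is indeed harmless because the construction descends to $\Bbb F_p$ as noted in the paper's footnote.
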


The category $\langle V\rangle$ is therefore denoted $\Rep^{\rm ab} GL_t$.

In fact, we can recover $t$ from the ultrafilter $\mathcal F$ without mentioning any categories whatsoever. Namely, recall that if $\bold X$ is a compact metric space and $x_1, x_2, \dots$ is a sequence in $\bold X$, then by the Bolzano-Weierstrass theorem it has a convergent subsequence. Therefore, this sequence has subsequential limits forming some non-empty closed subset $E \subseteq \bold X$. The choice of a non-principal ultrafilter $\mathcal F$ on $\Bbb N$ will ``vote" for one of them, $x\in E$, in the sense that almost all $x_n$ with respect to $\mathcal{F}$ belong to any given neighborhood of $x$. Therefore, any sequence in $X$ has a limit
$$
x = \lim_{\substack{n\rightarrow \infty \\ \mathcal{F}}} x_n
$$
with respect to the ultrafilter $\mathcal F$, even though it may not have a limit in the usual sense.

In particular, we can look at the sequence $(1, 2, 3, \dots)$ in $\mathbb{Z}_p$ (the $p$-adic integers, which form a compact metric space under $p$-adic norm), which is dense in $\mathbb{Z}_p$; hence a choice of ultrafilter will pick one (potentially arbitrary) element of $\Bbb Z_p$ as the limit, and this element is precisely $t$.

\begin{rem} It is expected that the category $\Rep^{\rm ab} GL_t, t\in \Bbb Z_p$ is universal (in an appropriate sense) symmetric tensor category with an object of
$p$-adic dimension $t$, but such a result is not
available thus far, even as a precise conjecture.
\end{rem} 

\section{Lecture 3: Symmetric tensor categories of moderate growth and modular representation theory}

\subsection{Semisimplification of Symmetric Tensor Categories}
As we have explained, by Deligne's theorem symmetric tensor categories of moderate growth over a field of characteristic $0$ are super-Tannakian. In contrast, in characteristic $p > 0$, we can construct counterexamples to this theorem. This is interesting because we can do linear algebra, representation theory, and algebraic geometry in such a category, which will differ in an essential way from the usual setting.
\par
In order to construct such an example, we first review a general procedure known as {\bf semisimplification of symmetric pseudotensor categories} (following  \cite{EO} and references therein). As the name suggests, this process involves starting with an arbitrary symmetric tensor (or, more generally, pseudotensor) category and constructing its semisimplification (i.e., a new semisimple symmetric tensor category) by quotienting the Hom spaces by the subspaces of negligible morphisms. Loosely speaking, in this way we force Schur's lemma to hold for indecomposable objects. More precisely, we make the following definition.

Let $\mathcal{C}$ be a symmetric pseudotensor category in which the trace of any nilpotent endomorphism is zero (for example, this is true if $\mathcal{C}$ admits a symmetric tensor functor into a symmetric tensor category, as then the trace can be computed in this target category).

\begin{Def}(Semisimplification).
We define the {\bf semisimplification} $\overline{\mathcal{C}} = \mathcal{C}/\mathcal N(\mathcal{C})$ to be the category whose objects are the same as those of $\mathcal{C}$ and whose morphisms are given by $\Hom_{\overline{\mathcal{C}}}(X, Y) = \Hom_{\mathcal{C}}(X, Y)/\mathcal N(X, Y)$, where $\mathcal N(X, Y)$ is the space of negligible morphisms from $X$ to $Y$.
\end{Def}

Since $\mathcal N(\mathcal C)$ is a tensor ideal,
the category $\overline{\mathcal C}$ is a symmetric pseudotensor category.
Moreover, it is equipped with the symmetric monoidal {\bf semisimplification functor}
$S: \mathcal C\to \overline{\mathcal C}$ which assigns
to every object $X$ the same object $X$ but now viewed as an object of $\overline{\mathcal C}$.
To distinguish it from $X$, we denote this new (or, rather, not so new)
object by $\overline{X}$.

We would now like to understand the structure of $\overline{\mathcal{C}}$ more concretely. For this we will need the following lemma.

\begin{lem}\label{bens} (D. Benson) (i) Let $X = \bigoplus_{i=1}^m X_i$, $Y = \bigoplus_{j=1}^n Y_j$, where $X_i,Y_j\in \mathcal C$ are indecomposable. Let $f: X \rightarrow Y \in \mathcal{C}$ be a morphism, and let $f = \bigoplus_{i,j} f_{ij}$ where $f_{ij}: X_i \rightarrow Y_j$. Then, $f$ is negligible if and only if $f_{ij}$ is negligible for all $i, j$.

(ii) Assume that $X,Y$ are indecomposable. Then $f$ is negligible if and only if either $\dim Y = 0$ or $f$ is not an isomorphism.
\end{lem}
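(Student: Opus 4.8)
The plan is to prove (i) by a direct computation with the categorical trace, and then to reduce (ii) to a statement about the endomorphism ring of a single indecomposable object, which is a local ring.

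For (i), the only facts about ${\rm Tr}$ that I need are that it is $\KK$-linear and cyclic, ${\rm Tr}(\alpha\circ\beta)={\rm Tr}(\beta\circ\alpha)$. First I would record that for $h\in\mathrm{End}\bigl(\bigoplus_j Y_j\bigr)$ one has ${\rm Tr}(h)=\sum_j{\rm Tr}(\pi_j\circ h\circ\iota_j)$, where $\iota_j,\pi_j$ are the inclusions and projections; this follows by writing $1_Y=\sum_j\iota_j\circ\pi_j$ and invoking cyclicity. Applying this to $f\circ g$ for an arbitrary $g\colon Y\to X$ with blocks $g_{ji}\colon Y_j\to X_i$, a short computation collapses the resulting sum to ${\rm Tr}(f\circ g)=\sum_{i,j}{\rm Tr}(f_{ij}\circ g_{ji})$. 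From this identity both directions are immediate: if every $f_{ij}$ is negligible, then every summand vanishes for every $g$, so $f$ is negligible; conversely, to test a fixed block $f_{i_0 j_0}$ against an arbitrary $h\colon Y_{j_0}\to X_{i_0}$, take $g=\iota_{i_0}\circ h\circ\pi_{j_0}$, which annihilates all other terms and leaves ${\rm Tr}(f\circ g)={\rm Tr}(f_{i_0 j_0}\circ h)$, so negligibility of $f$ forces negligibility of $f_{i_0 j_0}$.

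For (ii), by part (i) it is enough to decide when a single morphism $f\colon X\to Y$ between indecomposable objects is negligible. The key structural input is that, in the present Karoubian setting with finite-dimensional Hom spaces over an algebraically closed field, $\mathrm{End}(Y)$ is a finite-dimensional local $\KK$-algebra: every $h\in\mathrm{End}(Y)$ can be written $h=\lambda\,1_Y+\nu$ with $\lambda\in\KK$ and $\nu$ nilpotent, so ${\rm Tr}(h)=\lambda\dim Y$ (here we use the running hypothesis that ${\rm Tr}(\nu)=0$), and $h$ is invertible precisely when $\lambda\neq 0$ and nilpotent precisely when $\lambda=0$. Now fix $g\colon Y\to X$ and consider $f\circ g\in\mathrm{End}(Y)$. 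If $\dim Y=0$ then ${\rm Tr}(f\circ g)=\lambda\dim Y=0$ for every $g$, so $f$ is negligible. If $f$ is not an isomorphism, I claim $f\circ g$ can never be invertible: otherwise $f$ would be a split epimorphism with section $g'=g\circ(f\circ g)^{-1}$, and then the idempotent $g'\circ f\in\mathrm{End}(X)$ would equal $0$ or $1_X$ by indecomposability of $X$; the value $0$ forces $f=0$ and hence $Y=0$, while $1_X$ makes $f$ an isomorphism, and either way we reach a contradiction (the degenerate case $Y=0$ being already covered by $\dim Y=0$). Hence $f\circ g$ is nilpotent, ${\rm Tr}(f\circ g)=0$, and $f$ is negligible. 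For the remaining case, if $\dim Y\neq 0$ and $f$ is an isomorphism, choose $g=f^{-1}$, so that ${\rm Tr}(f\circ g)={\rm Tr}(1_Y)=\dim Y\neq 0$ and $f$ is not negligible. Together these give exactly the asserted dichotomy.

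The only step I expect to require genuine care is this reduction through the local structure of $\mathrm{End}(Y)$ combined with the split-epimorphism argument linking non-invertibility of $f$ to non-invertibility of $f\circ g$; the rest is formal bookkeeping with traces. I would also check at the outset that the ambient category indeed satisfies the standing hypothesis that nilpotent endomorphisms have trace $0$ — as happens, for instance, whenever $\mathcal{C}$ admits a symmetric tensor functor to a symmetric tensor category — since this is exactly what makes the identity ${\rm Tr}(\lambda\,1_Y+\nu)=\lambda\dim Y$ legitimate.
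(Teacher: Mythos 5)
Your proof is correct and follows essentially the same route as the paper: the block identity $\mathrm{Tr}(f\circ g)=\sum_{i,j}\mathrm{Tr}(f_{ij}\circ g_{ji})$ for (i), and for (ii) the locality of $\mathrm{End}(Y)$, the vanishing of traces of nilpotents, and the split-epimorphism/idempotent argument showing a non-isomorphism $f$ cannot have $f\circ g$ invertible. The only (harmless) organizational difference is that you prove (i) directly by isolating a single block with $g=\iota_{i_0}\circ h\circ\pi_{j_0}$, whereas the paper deduces (i) from the characterization in (ii).
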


\begin{proof} We first prove (ii).
Suppose $f: X \rightarrow Y$ is not an isomorphism and $g: Y \rightarrow X$. Assume $f \circ g$ is an isomorphism. Replacing $g$ with its composition with the inverse of this isomorphism, we may assume that $f\circ g=1_Y$. Then $g\circ f: X\to X$ is a projector. Thus there is an isomorphism $X\cong Y\oplus T$ such that $f$ is the natural projection. Since $f$ is not an isomorphism, $T\ne 0$, which contradicts indecomposability of $X$. Therefore, $f \circ g$ is not an isomorphism. 

Since $Y$ is indecomposable, ${\rm End}(Y)$ is a local algebra. Hence $f\circ g$ is nilpotent. It follows that ${\rm Tr}(f \circ g) = 0$, thus $f$ is negligible.
\par
If $\dim Y = 0$ then for any $g: Y \rightarrow X$ write $f \circ g = \lambda 1_Y + \eta$, where $\lambda \in \KK$ and $\eta: Y \rightarrow Y$ is nilpotent. Hence,
$$
{\rm Tr}(f \circ g) = {\rm Tr}(\lambda 1_Y + \eta) = \lambda \dim Y = 0,
$$
so $f$ is negligible. This completes one direction.
\par
Now let us establish the other direction. Suppose $f$ is negligible. It suffices to show that if $f$ is an isomorphism then $\dim Y =  0$. If $f$ is an isomorphism, let $g = f^{-1} : Y \rightarrow X$. By the negligiblity assumption, we have $0 = {\rm Tr}(f \circ g)= \dim Y$.
\par
Now we prove (i). If $g : Y \rightarrow X$ decomposes as $g = \bigoplus_{j,i} g_{ji} : Y_j \rightarrow X_i$, then ${\rm Tr}(f \circ g) = \sum_{i,j} {\rm Tr}(f_{ij} \circ g_{ji})$. Now, if for all $i, j$ either $\dim Y_j = 0$ or $f_{ij}$ is not an isomorphism, then (ii) implies that ${\rm Tr}(f_{ij} \circ g_{ji}) = 0$ for all $i,j$ and hence ${\rm Tr}(f \circ g ) = 0$, meaning $f$ is negligible. If, on the other hand, there exist $i,j$ such that $\dim Y_j \neq 0$ and $f_{ij}$ is an isomorphism, then let $g_{ji} = f_{ij}^{-1}$, and $g_{qp} = 0$ for $(p, q) \neq (j, i)$; then ${\rm Tr}(f \circ g) = \dim Y_j \neq 0$, which implies $f$ is not negligible. This proves the lemma.
\end{proof}

\begin{cor}
The semisimplification $\overline{\mathcal{C}}$ is a semisimple symmetric tensor category. Moreover, the simple objects of $\overline{\mathcal{C}}$ are the indecomposables of $\mathcal C$ of nonzero dimension.
\end{cor}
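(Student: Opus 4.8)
The plan is to reduce everything to the behaviour of indecomposable objects via the Krull--Schmidt theorem and then read off the answer from Lemma~\ref{bens}. Since $\mathcal C$ is Karoubian with finite dimensional Hom spaces, every object $X\in\mathcal C$ decomposes as a finite direct sum $X\cong\bigoplus_i X_i$ of indecomposables, uniquely up to isomorphism and permutation; because the semisimplification functor $S$ is additive, $\overline X\cong\bigoplus_i\overline{X_i}$ in $\overline{\mathcal C}$. So it suffices to understand the objects $\overline{X_i}$ and the morphisms between them.

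First I would dispose of the dimension-zero indecomposables: if $X$ is indecomposable with $\dim X=0$, then applying Lemma~\ref{bens}(ii) with $Y=X$ and $f=1_X$ shows that $1_X$ is negligible, hence $1_{\overline X}=0$ in $\overline{\mathcal C}$, i.e. $\overline X$ is a zero object. Next, let $X,Y$ be indecomposable with $\dim Y\neq 0$. By Lemma~\ref{bens}(ii) a morphism $f\colon X\to Y$ is negligible precisely when it fails to be an isomorphism. If $X\not\cong Y$ every morphism $X\to Y$ is non-invertible, so $\mathcal N(X,Y)=\Hom_{\mathcal C}(X,Y)$ and $\Hom_{\overline{\mathcal C}}(\overline X,\overline Y)=0$. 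If $X\cong Y$ we may assume $X=Y$; since $X$ is indecomposable with finite dimensional endomorphism algebra, $\End_{\mathcal C}(X)$ is local (Fitting's lemma), its unique maximal ideal being exactly the set of non-isomorphisms, which we have just identified with $\mathcal N(X,X)$. Hence $\End_{\overline{\mathcal C}}(\overline X)=\End_{\mathcal C}(X)/\mathcal N(X,X)$ is a finite dimensional division algebra over the algebraically closed field $\KK$, so it equals $\KK$; in particular $\overline X\neq 0$ and it has no nontrivial idempotents.

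Putting this together: every object of $\overline{\mathcal C}$ is a finite direct sum of objects of the form $\overline X$ with $X\in\mathcal C$ indecomposable of nonzero dimension, these satisfy $\End_{\overline{\mathcal C}}(\overline X)=\KK$ and $\Hom_{\overline{\mathcal C}}(\overline X,\overline Y)=0$ whenever $\overline X\not\cong\overline Y$, and any morphism between two such finite direct sums is a tuple of matrices over $\KK$ (cross terms between non-isomorphic summands vanishing). Such an additive category is automatically abelian and semisimple — kernels, cokernels and images are computed block-wise by ordinary linear algebra — and it is artinian since objects have finite length and Hom spaces are finite dimensional. It is rigid because $S$ is symmetric monoidal and such functors preserve duals, and $\End_{\overline{\mathcal C}}(\overline{\mathbbm 1})=\KK$ because $\dim\mathbbm 1=1\neq 0$ and $\End_{\mathcal C}(\mathbbm 1)=\KK$. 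Thus $\overline{\mathcal C}$ is a semisimple symmetric tensor category, and its simple objects are exactly the $\overline X$ with $X$ indecomposable in $\mathcal C$ and $\dim X\neq 0$; distinct indecomposables give non-isomorphic simples, since a nonzero (in particular an invertible) morphism $\overline X\to\overline Y$ between such objects lifts, by Lemma~\ref{bens}(ii), to an isomorphism $X\to Y$.

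There is no serious obstacle: this is a corollary of Lemma~\ref{bens} together with the already-established facts that $\mathcal N(\mathcal C)$ is a tensor ideal and that $S$ is a symmetric monoidal functor. The only genuinely non-formal ingredients are the identification $\End_{\mathcal C}(X)/\mathcal N(X,X)=\KK$ for indecomposable $X$ of nonzero dimension, which rests on locality of $\End_{\mathcal C}(X)$ (Krull--Schmidt/Fitting) and algebraic closedness of $\KK$, and the standard fact that a ``Schur-type'' semisimple additive category with finite dimensional Hom spaces is automatically abelian and artinian, which is precisely what upgrades $\overline{\mathcal C}$ from pseudotensor to tensor. Everything else is bookkeeping.
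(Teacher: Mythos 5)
Your proposal is correct and follows essentially the same route as the paper: decompose into indecomposables via Krull--Schmidt, apply Lemma~\ref{bens} to kill the dimension-zero summands and all non-isomorphisms, and use locality of $\End_{\mathcal C}(X)$ over the algebraically closed field $\KK$ to get $\End_{\overline{\mathcal C}}(\overline X)=\KK$. The paper's proof is just a terser version of yours; the extra verifications you include (rigidity via preservation of duals, $\End(\overline{\mathbbm 1})=\KK$, and the passage from a Schur-type additive category to an abelian semisimple one) are correct and implicitly assumed there.
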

\begin{proof} By Lemma \ref{bens}, 
If $X, Y \in\mathcal{C}$ are indecomposable, then $\Hom_{\overline{\mathcal{C}}}(X, Y) = 0$ if $X \ncong Y$ or $\dim X = 0 $. If, on the other hand, $X \cong Y$ and $\dim X \neq 0$, then $\dim_\KK \Hom_{\overline{\mathcal{C}}}(X, Y) = 1$. In particular, if $X\in \mathcal C$ is indecomposable then $\overline{X}$ is simple if $\dim X\ne 0$
and $\overline{X}=0$ if $\dim X=0$, as claimed.
\end{proof}

Motivated by this, we will say that an object $X\in \mathcal{C}$ is {\bf negligible}
if $\overline{X}=0$. In other words, $X$ is negligible if and only if so is $1_X$. Otherwise formulated, $X$ is negligible if all its indecomposable direct summands have dimension $0$.

\begin{rem} 1. If $\mathcal C$ is semisimple then $\mathcal{N}(\mathcal C)=0$, so
$\overline{\mathcal C}=\mathcal C$.

2. If $\mathcal{C}$ is a non-semisimple symmetric tensor category
then the semisimplification functor $S: \mathcal C\to \overline{\mathcal C}$
is not left or right exact, so it is not a tensor functor.
\end{rem}

\begin{rem} An intriguing open question is whether there exists a {\bf semisimple}
symmetric tensor category of non-moderate growth in positive characteristic. Indeed, the Deligne categories $\Rep^{\rm ab} GL_t$, $t\in \Bbb Z_p$ and similar interpolation categories for $O_n,Sp_n,S_n$ have non-moderate growth but are never semisimple.

An obvious idea is to consider the semisimplification
of one of these categories or some (pseudo)tensor subcategory there. However, it is not clear whether or not such a semisimplification will have moderate growth, and in the few examples
where this has been computed it turns out that it actually does (the growth can be drastically reduced by the semisimplification procedure, as it discards many negligible direct summands, declaring them to be zero).

Another approach to constructing such an example could be taking a sequence of finite groups $G_n$ of order coprime to $p$ and of representations $V_n$ of $G_n$ over $\KK$ such that $\dim V_n\to \infty$ as $n\to \infty$ but the length of $V_n^{\otimes k}$
is bounded above by some constant $\ell_k$ independent of $n$.
In this case the category $\langle V\rangle$ generated by
$V:=(V_1,V_2,...)$ in the ultraproduct of $\Rep_\KK(G_n)$ will be a semisimple
symmetric tensor category of non-moderate growth, as desired. However, this approach fails, as it can be deduced from the theory of finite groups that sequences $(G_n,V_n)$ with such properties do not exist.

This lack of success in constructing a counterexample motivates a conjecture that any semisimple symmetric tensor category in characteristic $p$ has moderate growth.
\end{rem}

\subsection{The Verlinde Category $\Ver_p$}
A simple but rich example of semisimplification is the {\bf Verlinde category} $\Ver_p$. Let $\KK$ be an algebraically closed field of characteristic $p$. Let $\mathcal{C} = \Rep_\KK (\Z/p)$ be the category of finite dimensional $\KK$-representations of the cyclic group $\Z/p$. This is the simplest group whose representations are not semisimple in characteristic $p$. Indeed, since $\Z/p= \langle g \ | \ g^p = 1 \rangle$, we can realize the group algebra $\KK[\Z/p]$ as $\KK[g]/(g^p - 1) = \KK[g]/(g-1)^p$, since we are in characteristic $p$. By the Jordan normal form theorem, the pairwise non-isomorphic indecomposable representations of $\KK[\Z/p\Z]$ are $J_n := \KK[g]/(g-1)^n$ for $1 \leq n \leq p$. The representation $J_n$ can be realized as $\KK^n$ with $\Z/p$ action given by mapping $g $ to the $n \times n$ Jordan block with eigenvalue $1$.

 Define now the {\bf Verlinde category} $\Ver_p$ to be the semisimplification of $\mathcal{C}$. Then the simple objects of ${\rm Ver}_p$ are $L_i:=\overline{J_i}$,
 $i=1,...,p-1$. Indeed, the remaining indecomposable object $L_p$ has dimension $0$ in characteristic $p$ so it vanishes in the semisimplification.

The tensor product $L_m \otimes L_n$ for $m,n\ll p$ decomposes like the tensor product of Jordan blocks in characteristic $0$, which follows the usual {\bf Clebsch-Gordan rule} for representations of $\mathfrak{sl}_2(\mathbb{C})$:
\[
L_m \otimes L_n = \bigoplus_{i=1}^{\min(m,n)} L_{|m-n| + 2i - 1}.
\]
 However, when $m,n$ get close to $p$, this direct sum has to be truncated.
 Obviously there is no $L_i$ for $i \ge p$, but some other terms also disappear,
 following the so called {\bf truncated Clebsch-Gordan rule}, or {\bf Verlinde rule}:
 \[
L_m \otimes L_n =\bigoplus_{i=1}^{\min(m,n,p-m,p-n)} L_{|m-n| + 2i - 1},
\]
which arises in the representation theory for the affine  Lie algebra $\widehat{\mathfrak{sl}}_2$ at level $k=p-2$ and Lusztig's quantum group $U_q(\mathfrak{sl}_2)$ for $q=e^{\frac{\pi i}{p}}$ (see \cite{EGNO}, Subsection 8.18.2). This explains the terminology.

\begin{example} When $p = 2$, there is one simple object, and the category produced is $\Ver_2 = \Vecc_\KK$.

When $p = 3$, there are two simple objects: $L_1$ and $L_2$, where $L_1$ is the unit and $L_2 \otimes L_2 = L_1$. It turns out that the usual braiding on $\mathcal{C} = \Rep_\KK(\Z/p)$ yields the super-braiding from \S\ref{supervec} and so $\Ver_3 = \sVec_\KK$.

For $p = 5$, we have four simple objects: $L_1, L_2, L_3, L_4$ where $L_1$ is the unit. The truncated Clebsch-Gordan rule in $\Ver_p$ tells us that, for instance,
\begin{equation}\label{tenpr}
L_3 \otimes L_3 = \mathbbm{1} \oplus L_3.
\end{equation}
From this relation, we can actually deduce that $\Ver_5$ is not super-Tannakian. Indeed, if $F: \Ver_5 \rightarrow \sVec_\KK$ is a fiber functor and $\dim_\KK F(L_3)=d$ then equality \eqref{tenpr} means that $d^2 = 1 + d$. This equation has no solutions over the integers, which is where dimensions in $\sVec_\KK$ take values. This shows that Deligne's theorem does not hold in characteristic $5$ (as $\Ver_5$ definitely is a symmetric tensor category of moderate growth).

In a similar way one can show that $\Ver_p$ is not super-Tannakian for any $p\ge 5$, hence Deligne's theorem fails for all such $p$.
We will see below that it fails in characteristics 2 and 3 as well, but in this case the examples will have to be non-semisimple.
\end{example}

\begin{rem} 1. We have $L_{p-1}\otimes L_{p-1}=L_1$ and $\dim L_{p-1}=-1$, so for $p>2$ the objects $L_1$ and $L_{p-1}$ span a copy of ${\rm sVec}_\KK$ inside ${\rm Ver}_p$. Let ${\rm Ver}_p^+$ be the tensor subcategory of ${\rm Ver}_p$ spanned by $L_i$ with odd $i$. Then
for $p>2$ we have ${\rm Ver}_p={\rm Ver}_p^+\boxtimes {\rm sVec}_\KK$.

2. The category ${\rm Ver}_p$ also arises as the semisimplification of the category $\Rep\KK[x]/(x^p)$, the representation category of
the finite group scheme $\alpha_p={\rm Spec} \KK[x]/(x^p)$. Thus if we have a
Lie algebra $\g$ over $\KK$ with an automorphism $g$ such that $g^p=1$
or a derivation  $d$ such that $d^p=0$ then the semisimplification $\overline{\g}$
is a Lie algebra in ${\rm Ver}_p$, so in particular
if $\g_i:=\Hom(L_i,\g)$ then $\g_1\oplus \g_{p-1}$ is a Lie superalgebra.
It turns out that this is a source of many interesting examples of Lie superalgebras in positive characteristic, and more generally this construction is a starting point for Lie theory in ${\rm Ver}_p$.

3. ${\rm Ver}_p$ can also be obtained as the semisimplification of the category of tilting modules over $SL_2(\KK)$, which is how it was originally constructed by S. Gelfand and D. Kazhdan and by G. Georgiev and O. Mathieu in early 1990s. They also introduced similar categories ${\rm Ver}_p(G)$ for other simple algebraic groups $G$ if $p\ge h$ where $h$
is the Coxeter number of $G$. Namely, ${\rm Ver}_p(G)$ is the semisimplification
of the category of tilting modules for $G(\KK)$ (see \cite{EO} and references therein for more details).
 \end{rem}

\subsection{Fiber functors into $\Ver_p$}

We have seen that for a symmetric tensor category of moderate growth over a field $\KK$ of characteristic zero, there is always a fiber functor into $\sVec_\KK$ (Deligne's theorem), but for prime characteristic this is not true; e.g., $\Ver_p$ was a counterexample in characteristic $p\ge 5$. In fact, even more is true:
$\Ver_p$ is {\bf incompressible} in the sense that it does not admit a fiber functor into a smaller category. More precisely, any tensor functor $H: {\rm Ver}_p\to \mathcal C$
into another symmetric tensor category is necessarily a fully faithful embedding (\cite{BEO}, Theorem 4.71).

It is therefore natural to ask whether we could generalize Deligne's theorem to positive characteristic  if we take the receptacle of fiber functors to be ${\rm Ver}_p$
instead of $\sVec_\KK$; in other words, if $\mathcal{C}$ is a symmetric tensor category of moderate growth over $\KK$, can we find a fiber functor $\mathcal{C}\to \Ver_p$? If so, this would be a very good news, since then $\mathcal{C}$ would be the representation category of an affine group scheme in $\Ver_p$, and could therefore be studied by methods of Lie theory.

Amazingly, it turns out that this is true in the case $\mathcal{C}$ is a semisimple category. 
Namely, this is guaranteed by the following theorem: 

\begin{thm}(\cite{CEO}, 2021)\label{ostconj} If $\mathcal{C}$ is a semisimple symmetric tensor category of moderate growth over a field $\KK$ of characteristic $p > 0$, then it admits a fiber functor to $\Ver_p$.
\end{thm}

This theorem was conjectured by V. Ostrik in 2015 (\cite{O}, Conjecture 1.3) after he proved it in the special case of {\bf fusion} categories, i.e., semisimple tensor categories with finitely many simple objects (\cite{O}, Theorem 1.5). 

As noted above, we do not know if the assumption of moderate growth is really needed here.

\begin{rem} As in the case of usual fiber functors, if the fiber functor exists, it is unique up to an isomorphism.
\end{rem}

\subsection{The non-semisimple case} 

Theorem \ref{ostconj} does not hold if $\mathcal{C}$ is not semisimple. The simplest counterexample is in characteristic $2$: we can take $\mathcal{C}$
to be the category $\Rep(\KK[d]/d^2,R)$ of representations of the Hopf algebra $\KK[d]/d^2$ with $\Delta(d)=d\otimes 1+1\otimes d$ with the braiding
$c=\sigma R$, where $\sigma$ is the usual swap and
$R=1\otimes 1+d\otimes d$ (this example, due to S. Venkatesh,
is discussed in \cite{BE}). The examples in characteristic $p>2$ are more complicated.
The ultimate result, proved in \cite{BE} in characteristic $2$ and then in \cite{BEO} for general $p$, is as follows.

\begin{thm}(\cite{BEO}) There is a nested sequence of incompressible symmetric tensor categories in characteristic $p$:
$$
\Ver_p \subseteq \Ver_{p^2} \subseteq \Ver_{p^3} \cdots
$$
The category $\Ver_{p^n}$ arises as the reduction to characteristic $p$ of the semisimplified category of tilting modules over the Lusztig quantum group $ U_q(\mathfrak sl_2)$ at the $p^n$-th root of unity. It also arises
as the abelian envelope of the quotient of the category of tilting modules
over $SL_2(\Bbb K)$ by the tensor ideal $I_n$ generated by the $n$-the Steinberg module
$T_{p^n-1}$ (the tilting module of highest weight $p^n-1$).
\end{thm}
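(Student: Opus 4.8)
\emph{Strategy.} The plan is to \emph{construct} the categories $\Ver_{p^n}$ from quantum $\sll$, verify they are symmetric tensor categories, assemble the tower, and only then prove incompressibility by induction on $n$; this last step is the real content, everything else being a (sometimes delicate) verification.

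\emph{Construction and symmetric structure.} Fix a primitive root of unity $\zeta$ of the relevant order ($p^n$, or $2p^n$ in some conventions) and let $U_\zeta$ be the Lusztig divided-power integral form of $U_q(\sll)$ over the ring $A=\Z[\zeta]$ localized at $p$. The category $\mathcal T_A$ of tilting $U_\zeta$-modules over $A$ is ribbon, hence braided monoidal and rigid. Let $\mathcal T_A/\langle T_{p^n-1}\rangle$ be the quotient by the $A$-linear tensor ideal generated by the first tilting module of quantum dimension $0$; over the fraction field $\Q(\zeta)$ this is the familiar semisimple $\widehat{\sll}$-fusion category (so it deserves the name ``semisimplified category of tilting modules''), but the construction must be carried out $A$-linearly and only then reduced: set $\Ver_{p^n}:=\big(\mathcal T_A/\langle T_{p^n-1}\rangle\big)\otimes_A\overline{\mathbb{F}_p}$, followed if necessary by Karoubian completion. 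The points to check are: (i) $\zeta\mapsto -1$ under $A\to\overline{\mathbb{F}_p}$, so the $q$-dependent ribbon braiding degenerates to a \emph{symmetric} one, namely the Koszul sign rule of \S\ref{supervec}, which is why $\sVec_{\overline{\mathbb{F}_p}}$ sits inside; (ii) the quantum dimensions $[k]=(\zeta^k-\zeta^{-k})/(\zeta-\zeta^{-1})$ are nonzero over $\Q(\zeta)$ but reduce to $k\bmod p$, so objects with $p\mid k$ acquire dimension $0$ — these lie in the mod-$p$ negligible ideal but not in the $A$-ideal $\langle T_{p^n-1}\rangle$, hence survive and make $\Ver_{p^n}$ non-semisimple for $n\ge 2$; (iii) the result is $\overline{\mathbb{F}_p}$-linear, artinian, rigid symmetric monoidal with $\End(\mathbbm 1)=\overline{\mathbb{F}_p}$, most of which is inherited from $\mathcal T_A$.

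\emph{The $SL_2$ description and the tower.} Via Lusztig's quantum Frobenius, identify $\mathcal T_A\otimes_A\overline{\mathbb{F}_p}$ with the category $\mathcal T$ of tilting modules over $SL_2(\overline{\mathbb{F}_p})$ (equivalently of its distribution algebra); under this identification the reduction of $T_{p^n-1}$ is the $n$-th Steinberg tilting module and the reduction of $\langle T_{p^n-1}\rangle$ is the tensor ideal $I_n$ it generates. Thus $\mathcal T/I_n$ embeds faithfully into $\Ver_{p^n}$ with image generating it under subquotients, so by the universal property of the abelian envelope (cf.\ the discussion of $\Rep^{\rm ab}GL_t$ above and \cite{EHS}) $\Ver_{p^n}$ is the abelian envelope of $\mathcal T/I_n$; for $n=1$, $I_1$ is the full negligible ideal, $\mathcal T/I_1$ is already semisimple abelian, and one recovers $\Ver_p$. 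The nesting functor $\Ver_{p^n}\hookrightarrow\Ver_{p^{n+1}}$ is produced from the Steinberg tensor-product / quantum-Frobenius structure (it realizes $\Ver_{p^n}$ as the full tensor subcategory of $\Ver_{p^{n+1}}$ generated by a suitable object, one whose $p$-adic dimension has vanishing top digit); that it is \emph{fully faithful} then requires no separate argument, since it follows from incompressibility of $\Ver_{p^n}$.

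\emph{Incompressibility (the crux).} One proves by induction on $n$ that every symmetric tensor functor $H\colon\Ver_{p^n}\to\mathcal C$ is fully faithful; the base case $n=1$ is Ostrik's theorem (\cite{O}; \cite{BEO}, Thm.\ 4.71). For the inductive step, restrict $H$ to the embedded copy of $\Ver_{p^{n-1}}$, which is fully faithful by the inductive hypothesis, so $\mathcal C$ already contains $\Ver_{p^{n-1}}$; it remains to show $H$ does not collapse the ``new'' part of $\Ver_{p^n}$. Since $\Ver_{p^n}$ is tensor-generated by a single object $X$ (the image of the tilting module $T_{p^{n-1}}$), one pins down $H(X)$ from preservation of its $p$-adic dimension and of the decomposition of $X^{\otimes 2}$, and — the decisive point for $n\ge 2$ — shows that the nonsplit self-extension built into $X$ must survive, i.e.\ $H(X)$ is again nonsemisimple with the correct radical filtration; otherwise a dimension count against the $p$-adic dimensions already realized inside $\mathcal C$ from $\Ver_{p^{n-1}}$ fails. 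Full faithfulness on all of $\Ver_{p^n}$ then follows because every object is a subquotient of a tensor power of $X$ and negligibles go to $0$. The main obstacle is exactly the survival of these self-extensions: in characteristic $0$, and for $n=1$ in characteristic $p$, the category is semisimple and incompressibility is comparatively formal, whereas here one must show that no symmetric tensor functor out of $\Ver_{p^n}$ can destroy the nonsplit extensions responsible for its non-semisimplicity. This rests on a delicate analysis of the $\mathrm{Ext}^1$-groups inside $\Ver_{p^n}$ (equivalently of the cohomology/support-variety data of the relevant quantum group, or of $SL_2$ in characteristic $p$) and of how they are forced to map forward under $H$ — carried out in \cite{BE} for $p=2$ by explicit module-category computations and in \cite{BEO} in general — and this is where essentially all of the work goes.
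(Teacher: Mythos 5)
First, note that the paper does not prove this theorem: it is stated as a citation of \cite{BEO} (and \cite{BE} for $p=2$), so there is no in-paper argument to compare against. Your proposal is a reasonable roadmap of the strategy of those papers, but at the two points where essentially all the difficulty lives it substitutes intentions for arguments, and one of these creates an internal inconsistency.

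The first gap is the abelian envelope. The reduction $\bigl(\mathcal T_A/\langle T_{p^n-1}\rangle\bigr)\otimes_A\overline{\mathbb F_p}$, Karoubian-completed, is only a symmetric \emph{pseudotensor} category; it is equivalent to $\mathcal T/I_n$ (tilting modules for $SL_2(\KK)$ modulo the ideal generated by the $n$-th Steinberg module), and for $n\ge 2$ it is \emph{not} abelian. So your first paragraph defines the wrong category (only the ``tilting part'' of $\Ver_{p^n}$), and your later sentence ``$\mathcal T/I_n$ embeds faithfully into $\Ver_{p^n}$ with image generating it under subquotients, so by the universal property $\Ver_{p^n}$ is the abelian envelope'' presupposes that an abelian receptacle has already been built. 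Existence of the abelian envelope is not automatic and is a major part of \cite{BEO}: one must either construct the ambient abelian category by hand or verify the hypotheses of an abelian-envelope criterion (that the generating objects are \emph{splitting} objects, not merely generators under subquotients). Relatedly, the symmetry (not just braidedness) of the reduced braiding and the flatness of the relevant Hom spaces over $A$ are real verifications, not formal consequences of $\zeta\mapsto\pm 1$.

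The second gap is incompressibility. Your inductive step reduces to the claim that ``the nonsplit self-extension built into $X$ must survive, otherwise a dimension count against the $p$-adic dimensions already realized from $\Ver_{p^{n-1}}$ fails'' --- this is the conclusion one wants, not an argument for it; $p$-adic dimensions are insensitive to whether an extension splits, so no dimension count by itself can detect the collapse. What is actually needed (and what \cite{BEO} supplies) is control of the endomorphism algebras $\End_{\mathcal C}(H(X)^{\otimes k})$: one uses the classification of the tensor ideals of the tilting category $\mathcal T$ of $SL_2(\KK)$ (they are exactly the $I_m$) to show that the composite $\mathcal T\to\Ver_{p^n}\to\mathcal C$ has kernel exactly $I_n$ and, more delicately, that it is \emph{full} onto its image, before the universal property of the abelian envelope can be used to propagate full faithfulness to all of $\Ver_{p^n}$. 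Neither the ideal classification nor the fullness-on-generators step appears in your sketch, and without them the induction does not close.
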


We can thus form the category $\Ver_{p^\infty}:=\bigcup_{n\ge 1}\Ver_{p^n}$. One can then make the following conjecture, which would generalize Deligne's theorem to characteristic $p$:

\begin{conj}(\cite{BEO})\label{beoconj} 
If $\mathcal{C}$ is a symmetric tensor category of moderate growth over a field $\KK$ of characteristic $p > 0$, then it admits a unique fiber functor into $\Ver_{p^{\infty}}$.
\end{conj}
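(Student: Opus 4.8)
The statement is at present a conjecture, so what follows is a proposed line of attack rather than a proof. By the Tannakian formalism, now with receptacle $\Ver_{p^{\infty}}$ in place of $\sVec_\KK$, the whole content is the construction of the fiber functor: given a symmetric tensor functor $F\colon\mathcal C\to\Ver_{p^{\infty}}$, the category $\mathcal C$ is recovered as the category of representations of the affine group scheme $G=\underline{\rm Aut}_\otimes(F)$ internal to $\Ver_{p^{\infty}}$, and conversely such representation categories have moderate growth since $\Ver_{p^{\infty}}=\bigcup_{n\ge1}\Ver_{p^n}$ is a union of finite tensor categories. The plan is to reduce first to the case $\mathcal C=\langle X\rangle$ for a single generating object $X$: an arbitrary $\mathcal C$ of moderate growth is the filtered union of its finitely $\otimes$-generated tensor subcategories, each again of moderate growth, and one glues the functors produced at each stage using the uniqueness at finite levels (established in tandem; see the final paragraph). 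Attached to $X$ is its $p$-adic dimension ${\rm Dim}(X)=t\in\Z_p$, encoded by the exterior powers via the formula $\sum_{n\ge0}\dim(\wedge^n X)\,z^n=(1+z)^t$ of the preceding subsection; the expectation is that $\langle X\rangle$ already embeds into a finite level $\Ver_{p^{n}}$ determined by $t$ together with the finer dimension data of the exterior and symmetric powers of $X$, so that $\Ver_{p^{\infty}}$ is needed only after passing to the colimit over generators.

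When $\mathcal C$ is semisimple, the desired functor into $\Ver_p\subseteq\Ver_{p^{\infty}}$ is (conjecturally) Ostrik's Conjecture \ref{ostconj}, and unconditionally Theorem \ref{ostth} when $\mathcal C$ is moreover fusion; so the semisimple case is not where the difficulty lies. In general one has the semisimplification functor $S\colon\mathcal C\to\overline{\mathcal C}$ and a fiber functor $\overline F\colon\overline{\mathcal C}\to\Ver_{p^{\infty}}$ on the semisimple quotient, but $S$ is not exact, so $\overline F\circ S$ is only symmetric monoidal, not a symmetric tensor functor. The heart of the problem is to \emph{deform} it to an honest exact faithful symmetric monoidal functor out of $\mathcal C$ itself. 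The natural machinery is the positive-characteristic Frobenius twist functor: one expects a symmetric monoidal functor ${\rm Fr}$ lifting the classical Frobenius twist and, by iteration, a tower ${\rm Fr}^{(n)}$ whose $n$-th stage routes an increasing portion of the structure of $\mathcal C$ through a $\Ver_{p^{n}}$-factor while a ``residual'' category is progressively simplified; landing in $\Ver_{p^{\infty}}$ then amounts to showing that this tower stabilizes. An alternative, more cohomological route is to extend $F$ directly along the filtration of $\mathcal C$ by powers of its negligible ideal, solving the successive obstruction problems in ${\rm Ext}$-groups of the finite tensor categories $\Ver_{p^n}$.

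The step I expect to be the genuine obstacle is the \emph{convergence} of this process, and it is precisely here that moderate growth must enter essentially. Concretely, for $\mathcal C=\langle X\rangle$ one must show that after finitely many Frobenius twists the residual object becomes $\otimes$-invertible, so the residual category collapses and $\mathcal C$ embeds in some $\Ver_{p^{n}}$; equivalently, that no piece of $\mathcal C$ survives beyond what $\Ver_{p^{\infty}}$ can accommodate. The heuristic mechanism is a bookkeeping of $p$-adic dimensions: the digits $t_0,t_1,\dots$ of ${\rm Dim}(X)$, and those of every object built from $X$, must be realizable inside some $\Ver_{p^n}$, while the subexponential bound on $\ell_n(X)$ should rule out anything further---playing the role that the vanishing of some Schur functor plays in Deligne's characteristic-zero proof of Theorem \ref{Del}. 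Turning this heuristic into a theorem---controlling the limit of the Frobenius tower and verifying faithfulness outside the semisimple and Tannakian regimes---is exactly the part that is currently out of reach.

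Finally, uniqueness is the softer half. Since each $\Ver_{p^n}$ is incompressible, so that every tensor functor out of it is fully faithful (\cite{BEO}), any two fiber functors $F_1,F_2\colon\mathcal C\to\Ver_{p^{\infty}}$ on a finitely $\otimes$-generated $\mathcal C$ factor through a common finite level $\Ver_{p^{n}}$; one then expects $F_1\cong F_2$ canonically by an argument parallel to the proof that the fiber functor to $\Ver_p$ is unique up to unique isomorphism---the obstruction to an isomorphism being a class in a flat cohomology group, a torsor under $\underline{\rm Aut}_\otimes(F_1)$, which one shows to vanish. Patching these finite-level uniqueness statements legitimizes the gluing used in the first reduction and completes the scheme.
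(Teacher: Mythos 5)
The statement you were asked about is a \emph{conjecture}: the paper states it (following \cite{BEO}) and offers no proof, so there is no argument of the authors to compare yours against. You correctly recognize this and present a research programme rather than a proof, which is the only honest thing to do here; nothing in your write-up purports to close the conjecture, and indeed nothing could be graded as a proof.

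As a sketch of the expected strategy your proposal is reasonable and broadly consistent with how the problem is viewed: reduction to finitely $\otimes$-generated subcategories, Ostrik's Conjecture \ref{ostconj} as the semisimple case, incompressibility of the $\Ver_{p^n}$ for uniqueness, and the Frobenius functor (cf.\ \cite{EO2}) as the main structural tool in the non-semisimple case. Two cautions on the parts you flag as ``out of reach.'' First, the composite $\overline F\circ S$ is not merely ``only symmetric monoidal, not tensor''---since $S$ is not exact, it is not a functor one can deform within the class of fiber functors at all, so the non-semisimple case cannot be set up as a deformation problem starting from the semisimplification; the known approaches instead go through Frobenius-exactness of $\mathcal C$, which is itself a nontrivial hypothesis rather than an automatic consequence of moderate growth. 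Second, your heuristic that the $p$-adic dimension data of a generator should pin down the finite level $\Ver_{p^n}$ is too optimistic as stated: objects of $\Ver_{p^n}$ for different $n$ can share all exterior-power dimension data, so the level cannot be read off from ${\rm Dim}(X)$ alone and must come from finer homological information. These are exactly the points where the conjecture remains open.
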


For example, this holds for $\Rep(\KK[d]/d^2,R)$, as it is a tensor subcategory in $\Ver_4$.

\subsection{Frobenius exact categories} 

There is, however, a subclass of not necessarily semisimple symmetric tensor categories 
for which Theorem \ref{ostconj} holds. They are called {\bf Frobenius exact}, or {\bf locally semisimple}. Namely, given a symmetric tensor category $\mathcal{C}$ and 
an object $X\in \mathcal{C}$, we may form the object $X^{\otimes p}\in \mathcal{C}\boxtimes \Rep(\Bbb Z/p)$. Therefore, 
applying the semisimplification functor, we obtain an object $\overline{X^{\otimes p}}$ of  $\mathcal{C}\boxtimes \Ver_p$ which we denote by ${\rm Fr}(X)$ and call the {\bf Frobenius twist} of $X$. It is clear that 
$X\mapsto {\rm Fr}(X)$ is a symmetric monoidal functor (as it is a composition of two symmetric monoidal functors). It is also clear that it maps scalar multiplication by $\lambda\in \KK$ to multiplication by $
\lambda^p$. What is less obvious and more surprising is that ${\rm Fr}$ is an additive functor (as the functor $X\mapsto X^{\otimes p}$ is not additive at all). The reason for this is that the ``non-additive part" of $X\mapsto X^{\otimes p}$ gets killed by the semisimplification functor, when we set the Jordan blocks of size $p$ to zero. The functor ${\rm Fr}$ is called the {\bf Frobenius functor}. It was introduced in \cite{O} and studied in detail in \cite{EO2} and \cite{CEO}.\footnote{We note that the papers \cite{Co2} and \cite{CEO} also discuss a few other kinds of Frobenius functors.} For example, for $p=2$, $\Ver_2={\rm Vec}_\KK$, and ${\rm Fr}:\mathcal{C}\to \mathcal{C}$ is defined as follows: ${\rm Fr}(X)$ is the cohomology of $1-\sigma$ acting on $X\otimes X$, where $\sigma$ is the swap (it is clear that $(1-\sigma)^2=1-\sigma^2=0$). 

Thus the only way ${\rm Fr}$ can fall short of being a symmetric (twisted-linear) tensor functor is that it may fail to be exact. This does, in fact, happen: in the category $\Rep(\KK[d]/d^2,R)$ (and more generally, 
$\Ver_{p^n}$ for $n\ge 2$) the functor ${\rm Fr}$ is neither left nor right exact. It therefore makes sense 
to introduce the following definition. 

\begin{Def} $\mathcal{C}$ is called {\bf Frobenius exact} if its Frobenius functor is exact.  
\end{Def}   
 
For example, any semisimple category is automatically Frobenius exact. Since the Frobenius functor commutes with symmetric tensor functors, it follows that if $\mathcal{C}$ admits a symmetric tensor functor 
into a semisimple tensor category (for example, $\Ver_p$) then it is Frobenius exact (in particular, this applies to any super-Tannakian category). The main result of \cite{CEO} generalizing Theorem \ref{ostconj} is the following theorem. 

\begin{thm}\label{mainceo} (\cite{CEO}, Theorem 1.1) A symmetric tensor category $\mathcal{C}$ of moderate growth is Frobenius exact if and only if it admits a (necessarily unique) fiber functor to $\Ver_p$. 
\end{thm} 

For finite tensor categories (i.e., ones with finitely many simple objects and enough projectives) 
this theorem was proved earlier in \cite{EO2}, Theorem 8.1.   

In practice, the definition of Frobenius exactness is not easy to verify. Therefore, the following criterion is quite useful. 

\begin{prop} (\cite{Co2}, Theorem C). The following conditions on a symmetric tensor category 
$\mathcal{C}$ are equivalent. 

(i) $\mathcal{C}$ is Frobenius exact. 

(ii) For each filtered object $X\in \mathcal{C}$, the canonical epimorphism $S^\bullet({\rm gr}X)\to {\rm gr}(S^\bullet X)$ is an isomorphism.

(iii) For each monomorphism $\mathbbm{1}\to X$, the induced morphism $\mathbbm{1}\to S^pX$ is non-zero.

(iv) There exists an abelian $\KK$-linear symmetric monoidal category $\mathcal{D}$ (not necessarily a tensor category!) and an exact $\KK$-linear symmetric monoidal functor $F:\mathcal{C}\to \mathcal{D}$ which splits every short exact sequence in $\mathcal{C}$. 
\end{prop} 

The last property justifies the term ``locally semisimple" introduced by Deligne. 

In particular, all these properties fail in the categories $\Ver_{p^n}$, $n\ge 2$, as they are not Frobenius exact. 

\section{Appendix}

\subsection{Growth of tensor powers}

Theorem \ref{ostconj} may be used to study the growth of lengths of tensor powers in symmetric tensor categories, similarly to the analysis of \cite{BS}.\footnote{Here we partly follow the discussion of \cite{CEO}, Section 4.} Namely, for a symmetric tensor category $\mathcal C$ of moderate growth and $V\in \mathcal C$ let $d_n(V)$ be the length of $V^{\otimes n}$. It is easy to see that $d_{n+m}(V)\ge d_n(V)d_m(V)$, i.e., the sequence $\lbrace d_n(V)^{-1}\rbrace $ is submultiplicative (indeed, because of rigidity the tensor product of two simple objects cannot be zero). Therefore, by Fekete's lemma (\cite{B}, Lemma 1.6.3),
there exists a limit
$$
{\rm gd}(V):=\lim_{n\to \infty}d_n(V)^{\frac{1}{n}},
$$
which we will call the {\bf growth dimension} of $V$. Moreover, if $V\ne 0$ then 
$$
1\le {\rm gd}(V)={\rm sup}_{n\ge 1}d_n(V)^{\frac{1}{n}}<\infty.
$$
Also it is easy to see that 
$$
{\rm gd}(V^*)={\rm gd}(V),\ {\rm gd}(V^{\otimes n})={\rm gd}(V)^n,\ {\rm gd}(V\otimes W)\ge {\rm gd}(V){\rm gd}(W).
$$ 
Finally, it is less trivial but still not hard to show that if 
$$
0\to V\to U\to W\to 0
$$
is a short exact sequence then 
$$
{\rm gd}(U)\ge {\rm gd}(V)+{\rm gd}(W)
$$
(see \cite{CEO}, Lemma 4.9). 
 
\begin{lem}\footnote{This lemma fails for non-symmetric categories. For example, if $X$ is the standard 2-dimensional representation of the Yangian $Y(\mathfrak{sl}_2)$ then $X^{\otimes n}$ 
is simple for all $n$.}\label{square} Let $\mathcal C$ be a symmetric tensor category in any characteristic and $X\in \mathcal C$ 
be such that $X\otimes X$ is simple. Then $X$ is invertible, i.e., $X\otimes X^*=\mathbbm{1}$. 
\end{lem}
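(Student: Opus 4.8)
The plan is to establish three facts in sequence: that $X$ is simple, that $\dim X\ne 0$, and that the coevaluation exhibits $X\otimes X^*$ as $\mathbbm{1}$. For the first, I would use that in a rigid category $X\otimes(-)$ is exact and faithful, so $X\otimes A\ne 0$ for every $A\ne 0$; in particular $X\ne 0$, as $X\otimes X$ is simple and hence nonzero. If $0\ne Y\subsetneq X$ were a proper subobject, then $Y\otimes X$ would be a subobject of $X\otimes X$ that is nonzero (since $Y,X\ne 0$) and proper (since $(X/Y)\otimes X\ne 0$), contradicting simplicity. Thus $X$ is simple, and $\KK$ being algebraically closed, $\End(X)=\KK=\End(X\otimes X)$.

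The heart of the argument is the second fact, $\dim X\ne 0$; this is where the hypothesis has to be used in an essential way, because for a zero-dimensional object none of the naive retraction arguments apply. By rigidity, $\Hom(X,\,X\otimes X\otimes X^*)\cong\End(X\otimes X)=\KK$ is one-dimensional. I would exhibit two of its elements, $\alpha:=1_X\otimes coev_X$ and $\beta:=(1_X\otimes c_{X^*,X})\circ(coev_X\otimes 1_X)$, together with the contraction $\pi:=(1_X\otimes ev_X)\circ(1_X\otimes c_{X,X^*})\colon X\otimes X\otimes X^*\to X$, and compute $\pi\circ\alpha=(\dim X)\,1_X$ (directly from the definition of the trace) and $\pi\circ\beta=1_X$ (using $c_{X,X^*}\circ c_{X^*,X}=1$ to reduce it to the zigzag identity). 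Since $coev_X\ne 0$ and $X\otimes(-)$ is faithful, $\alpha\ne 0$, so it spans the one-dimensional Hom-space and $\beta=\lambda\alpha$ for some $\lambda\in\KK$; applying $\pi$ gives $1_X=\lambda(\dim X)\,1_X$ in $\End(X)=\KK$, which forces $\dim X\ne 0$.

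Finally, I would reorder tensor factors by the symmetry to get $X\otimes X^*\otimes X\otimes X^*\cong(X\otimes X)\otimes(X\otimes X)^*$, so $\End(X\otimes X^*)\cong\Hom(\mathbbm{1},(X\otimes X)\otimes(X\otimes X)^*)\cong\End(X\otimes X)=\KK$; hence $X\otimes X^*$ is indecomposable. On the other hand the composite $\mathbbm{1}\xrightarrow{coev_X}X\otimes X^*\xrightarrow{ev_X\circ c_{X,X^*}}\mathbbm{1}$ equals $\dim X$, which is invertible by the previous step, so $\mathbbm{1}$ splits off $X\otimes X^*$ as a direct summand; since $X\otimes X^*$ is indecomposable, $X\otimes X^*\cong\mathbbm{1}$, i.e.\ $X$ is invertible. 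Steps 1 and 3 are essentially formal (rigidity, Schur's lemma, splitting of idempotents), so I expect the only real difficulty to be the middle step of ruling out $\dim X=0$.
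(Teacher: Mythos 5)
Your proof is correct, but it follows a genuinely different route from the one in the paper. The paper's argument is a two-line computation with the braiding: since $X\otimes X$ is simple, $c_{X,X}$ acts on it by a scalar squaring to $1$, hence equal to $\pm 1$; the same scalar governs $X^*$, so the self-braiding of $X\otimes X^*$ is $(\pm1)^2=+1$, and this triviality of the symmetry on $X\otimes X^*$ is what forces $X\otimes X^*\cong\mathbbm{1}$ (the last implication being left to the reader). Your argument instead never looks at the eigenvalue of $c_{X,X}$: you use exactness and faithfulness of $X\otimes(-)$ to get simplicity of $X$, then Schur's lemma together with the duality adjunction $\Hom(X,X\otimes X\otimes X^*)\cong\End(X\otimes X)=\KK$ to compare the two canonical maps $\alpha,\beta$ and extract $\lambda\dim X=1$, and finally split $\mathbbm{1}$ off the indecomposable $X\otimes X^*$. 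What your version buys is that every step is explicit: in particular it isolates exactly where the symmetry (rather than a mere braiding) enters, namely the identity $c_{X,X^*}\circ c_{X^*,X}=1$ used to evaluate $\pi\circ\beta$, which is consistent with the Yangian counterexample in the footnote; and it makes the genuinely non-formal point, that $\dim X\neq 0$, into a clean computation rather than a consequence of an unstated fact about objects with trivial self-braiding. What it costs is length: the paper's proof, once one accepts the final implication, is essentially immediate. Both arguments are valid, and yours could serve as a self-contained expansion of the paper's terse final step.
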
 

\begin{proof} The symmetric braiding $\sigma_X: X\otimes X\to X\otimes X$ is a scalar such that $\sigma_X^2=1$. 
So $\sigma_X=\sigma_{X^*}=\pm 1$. Hence $\sigma_{X\otimes X^*}=\sigma_X\sigma_{X^*}=1$. 

Let $M:=X\otimes X^*/\mathbbm{1}$, where 
$\mathbbm{1}$ is embedded into $X\otimes X^*$ by the coevaluation map. Then 
$X\otimes X^*$ has a 3-step filtration with successive quotients 
$M\otimes M,M\oplus M,\mathbbm{1}$. The map $\sigma_{X\otimes X^*}$ preserves this filtration and acts on the second quotient $M\oplus M$ by permuting the two copies of $M$. 
So the identity $\sigma_{X\otimes X^*}=1$ implies that $M=0$, i.e., 
$X\otimes X^*\cong \mathbbm{1}$. 
\end{proof} 

\begin{cor}\label{lowerbo} 1. The following conditions on an object $V\in \mathcal C$ are equivalent: 

(i) ${\rm gd}(V)=1$; 

(ii) $d_n(V)=1$ for all $n$;

(iii) $V$ is invertible.

2. If ${\rm gd}(V)>1$ then ${\rm gd}(V)\ge \sqrt{2}$. Moreover, if $p>2$ then ${\rm gd}(V)\ge \frac{1+\sqrt{5}}{2}$.\footnote{Note that in characteristic $2$ the equality ${\rm gd}(V)=\sqrt{2}$ is possible, e.g. 
in the category $\mathcal C_2=\Ver_4$ discussed in \cite{BE}. Also ${\rm gd}(V)=\frac{1+\sqrt{5}}{2}$ occurs in 
${\rm Ver}_5$.}    
\end{cor}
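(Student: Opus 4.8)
The plan is to analyze the two parts of the corollary separately, leaning on Lemma \ref{square} and on elementary growth estimates for $c_n(V)$. For part 1, I would establish the cycle of implications (iii) $\Rightarrow$ (ii) $\Rightarrow$ (i) $\Rightarrow$ (iii). The implication (iii) $\Rightarrow$ (ii) is immediate: if $V$ is invertible then $V^{\otimes n}$ is invertible, hence simple, so $c_n(V)=1$ for all $n$. The implication (ii) $\Rightarrow$ (i) is just the definition of $\beta(V)$ as the limit (equivalently supremum) of $c_n(V)^{1/n}$. The interesting implication is (i) $\Rightarrow$ (iii): assuming $\beta(V)=1$, since $\beta(V)=\sup_{n}c_n(V)^{1/n}$ we get $c_n(V)=1$ for all $n$, in particular $c_2(V)=1$, i.e. $V\otimes V$ is simple; then Lemma \ref{square} gives that $V$ is invertible. (One should note $V\ne 0$ is implicitly assumed, since $\beta(0)$ is not defined / the statement is vacuous otherwise.)

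For part 2, suppose $\beta(V)>1$, so by part 1 the object $V$ is not invertible and $V\otimes V$ is not simple; hence $c_2(V)\ge 2$. Then $\beta(V)=\sup_n c_n(V)^{1/n}\ge c_2(V)^{1/2}\ge\sqrt 2$, which is the first bound. For the improved bound when $p>2$, the idea is that $V\otimes V$ is not just reducible but must have length at least $3$: in characteristic $\ne 2$ the symmetry $\sigma_V$ on $V\otimes V$ is an involution, so $V\otimes V=S^2V\oplus\wedge^2V$ splits into its symmetric and exterior parts, and I would argue that if one of these summands were zero and the other simple then $V$ would be forced to be invertible or to have an inconsistent dimension — more carefully, if $V\otimes V$ had length exactly $2$ it would be $L\oplus L'$ with $L,L'$ simple, one of them equal to $S^2V$ and the other to $\wedge^2V$; analyzing the subsequent tensor powers (e.g. computing a lower bound for $c_4(V)$, or using $\beta(V^{\otimes 2})=\beta(V)^2$ together with the submultiplicativity/short-exact-sequence inequalities for $\beta$) shows $c_n(V)$ grows at least like the Fibonacci sequence, giving $\beta(V)\ge\frac{1+\sqrt5}{2}$. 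The cleanest route is probably: show $c_2(V)\ge 3$ forces, via $c_{n+1}(V)\ge c_n(V)+c_{n-1}(V)$-type recursions coming from the decomposition of $V^{\otimes(n+1)}$, that $c_n(V)$ dominates Fibonacci numbers, whose growth rate is the golden ratio; alternatively if $c_2(V)=2$ one derives a contradiction with $p>2$ as sketched. The footnote's remark that $\beta(V)=\frac{1+\sqrt5}{2}$ is realized in ${\rm Ver}_5$ (via $L_3$, using \eqref{tenpr}: $L_3\otimes L_3=\mathbbm1\oplus L_3$ has length $2$, but $L_3^{\otimes n}$ has Fibonacci length) is the guide for why the bound is sharp and why the argument must produce exactly the golden ratio.

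The main obstacle I anticipate is ruling out $c_2(V)=2$ when $p>2$, i.e. showing that $V\otimes V$ cannot be a direct sum of two simples when $V$ is non-invertible in odd characteristic — or, if it can, that the growth of higher tensor powers is still at least golden-ratio. One has to be careful that in a general (non-semisimple) symmetric tensor category $V\otimes V$ need not be a direct sum of simples, only a length-$2$ object, so "two simples" should be "a length-$2$ object" and the Fibonacci recursion has to be extracted from exact sequences rather than direct-sum decompositions. Getting the recursion $c_{n+1}(V)\ge c_n(V)+c_{n-1}(V)$ to hold honestly — presumably by tensoring a two-step filtration of $V\otimes V$ with $V^{\otimes(n-1)}$ and using additivity of length in short exact sequences together with the fact that tensoring with a nonzero object (rigidity!) does not decrease length — is where the real work lies; everything else is bookkeeping with Fekete's lemma and the properties of $\beta$ already recorded above.
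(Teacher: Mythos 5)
Part 1 and the bound $\beta(V)\ge\sqrt{2}$ are correct and essentially identical to the paper's argument (the same cycle of implications through Lemma \ref{square}, and $c_2(V)\ge 2$ giving $\sqrt 2$). The problem is the golden-ratio bound for $p>2$, where your proposal has a genuine gap. Your primary idea --- that $V\otimes V$ must have length at least $3$ --- is false, and is contradicted by the very example you cite: $L_3\otimes L_3=\mathbbm{1}\oplus L_3$ in ${\rm Ver}_5$ has length $2$. Your first fallback, ``if $c_2(V)=2$ one derives a contradiction with $p>2$,'' therefore cannot work either. Your second fallback, the recursion $c_{n+1}(V)\ge c_n(V)+c_{n-1}(V)$, is not justified: tensoring the length-$2$ filtration of $V^{\otimes 2}$ with $V^{\otimes(n-1)}$ only gives $c_{n+1}(V)\ge \ell(V^{\otimes(n-1)}\otimes A)+\ell(V^{\otimes(n-1)}\otimes B)\ge 2c_{n-1}(V)$, which recovers $\sqrt 2$ and nothing more. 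To get $c_n+c_{n-1}$ you would need one of the two composition factors $A,B$ of $V^{\otimes 2}$ to ``dominate'' $V$ (as $L_3$ does in ${\rm Ver}_5$), and there is no reason for that in general; a priori $V\otimes B$ could even be simple for $B$ non-invertible.

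The paper's actual argument uses two ideas absent from your proposal. First, a dimension computation rules out the one truly bad case: if $V^{\otimes 2}$ had exactly two composition factors, \emph{both invertible}, then (since $p>2$) these would be $S^2V$ and $\wedge^2V$, of dimensions $d(d+1)/2$ and $d(d-1)/2$ with $d=\dim V$, and both cannot be $\pm 1$. (Your ``inconsistent dimension'' remark gestures at this but is aimed at the wrong case.) Second --- and this is the key missing idea --- instead of a recursion for the fixed object $V$, one runs a bootstrap over the whole category: let $\beta_*$ be the infimum of $\beta(Y)$ over all non-invertible $Y$. For any non-invertible $V$, either $V^{\otimes 2}$ has $\ge 3$ composition factors, so $\beta(V)^2\ge 3$, or it has exactly two, at least one of which, say $W$, is non-invertible by the first step; then the short-exact-sequence inequality gives $\beta(V)^2=\beta(V^{\otimes 2})\ge 1+\beta(W)\ge 1+\beta_*$. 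Taking the infimum over $V$ yields $\beta_*^2\ge 1+\beta_*$, hence $\beta_*\ge\frac{1+\sqrt 5}{2}$. This self-referential inequality is what produces the golden ratio, and it is exactly the step your sketch does not supply.
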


\begin{proof} 1. Since  ${\rm gd}(V)={\rm sup}_{n\ge 1}d_n(V)^{\frac{1}{n}}$, (i) implies (ii). By Lemma 
\ref{square}, (ii) (already for $n=2$) implies (iii). 
Finally, if (iii) holds then $1=d_n(V\otimes V^*)\ge d_n(V)d_n(V^*)$, so (ii) and (i) follow. 

2. By Lemma \ref{square}, $V\otimes V$ has at least two composition factors, so ${\rm gd}(V\otimes V)\ge 2$, hence ${\rm gd}(V)\ge \sqrt{2}$. 

If $p>2$ then we claim that $V\otimes V$ cannot consist of two invertible composition factors. 
Indeed, if it does then one of them must be $S^2V$ 
and the other $\wedge^2V$. So if  $d=\dim(V)$ then their dimensions $d(d+1)/2$ and $d(d-1)/2$ are both $\pm 1$. 
But these equations have no solutions for any choice of signs, a contradiction.

Let $\beta$ be the infimum of ${\rm gd}(Y)$ for non-invertible $Y\in \mathcal C$. If $\beta\ge 2$, there is nothing to prove, so assume that $\beta<2$. If $V\otimes V$ has three or more composition factors, we have ${\rm gd}(V)^2\ge 3>1+\beta$. 
Otherwise $V\otimes V$ has two composition factors one of which, call it $W$, is not invertible. So we again get
${\rm gd}(V)^2\ge 1+{\rm gd}(W)\ge 1+\beta$. Thus $\beta^2\ge 1+\beta$. Hence $\beta\ge \frac{1+\sqrt{5}}{2}$.  
\end{proof} 

Note that if $\mathcal C$ has finitely many simple objects 
then ${\rm gd}(V)={\rm FPdim}(V)$ is the Frobenius-Perron dimension 
of $V$ defined in \cite{EGNO}, Chapter 3 (\cite{CEO}, Lemma 8.3). 
Thus the same holds if $\mathcal C$ is a nested union of such tensor categories. 
But in fact, this holds even more generally. 

\begin{thm}\label{gro} Let $F: \mathcal C \to {\rm Ver}_p$ be a symmetric tensor functor, where for $p=0$ we agree that ${\rm Ver}_p={\rm sVec}_\KK$. 

(i) For every $X\in \mathcal C$ there exist $C,r>0$ such that for any simple composition factor 
$Y$ in $X^{\otimes n}$ we have 
$$
{\rm FPdim}(F(Y))\le C n^r.
$$

(ii) For every $X\in \mathcal C$, ${\rm gd}(X)={\rm FPdim}(F(X))$. In particular, 
${\rm gd}$ defines a homomorphism ${\rm Gr}(\mathcal C)\to \Bbb R$ from the Grothendieck ring of $\mathcal{C}$ to $\Bbb R$, and 
${\rm gd}(X)$ is an algebraic integer.  
\end{thm}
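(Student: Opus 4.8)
The plan is to derive part (ii) from part (i) by a short computation with Frobenius--Perron dimensions, and to prove part (i) by a Schur--Weyl argument. First I would dispose of (ii) granting (i). Since $F$ is exact and faithful it sends nonzero objects to nonzero objects, and a nonzero object of ${\rm Ver}_p$ (or of ${\rm sVec}_\KK$ when $p=0$) has ${\rm FPdim}\ge 1$: in a fusion category the Frobenius--Perron dimension of a nonzero object is at least $1$, and in ${\rm sVec}_\KK$ it is the total dimension. Applying $F$ to a composition series of $V^{\otimes n}$ yields a filtration of $F(V)^{\otimes n}$ whose successive quotients are $F(X_1),\dots,F(X_{c_n(V)})$, for $X_i$ the composition factors of $V^{\otimes n}$; since ${\rm FPdim}$ is additive on short exact sequences and multiplicative under $\otimes$,
\[
{\rm FPdim}(F(V))^n=\sum_{i=1}^{c_n(V)}{\rm FPdim}(F(X_i)).
\]
Bounding each summand below by $1$ gives $c_n(V)\le {\rm FPdim}(F(V))^n$, hence $\beta(V)=\sup_n c_n(V)^{1/n}\le {\rm FPdim}(F(V))$; bounding each summand above by $Cn^r$ using (i) gives ${\rm FPdim}(F(V))^n\le Cn^r\, c_n(V)$, hence $c_n(V)^{1/n}\ge {\rm FPdim}(F(V))\,(Cn^r)^{-1/n}\to {\rm FPdim}(F(V))$ and so $\beta(V)\ge {\rm FPdim}(F(V))$. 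Thus $\beta(V)={\rm FPdim}(F(V))$, which is a non-negative integer combination of Frobenius--Perron dimensions of simple objects of ${\rm Ver}_p$ and hence an algebraic integer; and since $V\mapsto {\rm FPdim}(F(V))$ is additive on short exact sequences and multiplicative, $\beta$ extends to a ring homomorphism ${\rm Gr}(\mathcal C)\to\Bbb R$.

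For part (i), write $W=F(V)$; the task is to bound ${\rm FPdim}(F(X))$ for a single simple composition factor $X$ of $V^{\otimes n}$. The bare fact that $F(X)$ is a subquotient of $W^{\otimes n}$ only gives the useless exponential bound ${\rm FPdim}(W)^n$, so I would exploit that $X$ is one factor among (typically exponentially) many, via Schur--Weyl duality: the braiding makes $V^{\otimes n}$ a module over $\KK[S_n]$ commuting with ${\rm End}(V^{\otimes n})$, and moderate growth of $\mathcal C$ (which is automatic here, since $F$ is faithful and exact and ${\rm Ver}_p$ has moderate growth) forces --- via Remark~\ref{Schur} in characteristic $0$ and its positive-characteristic analogue --- every composition factor of $V^{\otimes n}$ to occur inside a Schur/Weyl functor $\Delta^\lambda V$ for some partition $\lambda\vdash n$ lying in a fixed ``fat hook'' region $\mathcal R$ depending only on $V$; there are only polynomially many such $\lambda$. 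Since a symmetric tensor functor commutes with Schur functors, $F(X)$ is a subquotient of $\Delta^\lambda W$, so ${\rm FPdim}(F(X))\le {\rm FPdim}(\Delta^\lambda W)$; and because $W$ is a \emph{fixed} object of ${\rm Ver}_p$, the functor $\Delta^\lambda$ vanishes on $W$ unless $\lambda$ fits in a bounded hook determined by $W$, in which case a product/character formula expresses ${\rm FPdim}(\Delta^\lambda W)$ as the value of a (super, or Verlinde) Schur function at the Frobenius--Perron ``weights'' of $W$, a polynomial of bounded degree in the parts $\lambda_1,\lambda_2,\dots$, all of which are $\le n$. This yields ${\rm FPdim}(F(X))\le Cn^r$.

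The hard part will be making this Schur--Weyl step legitimate in characteristic $p$, where $\KK[S_n]$ is not semisimple and the $\Delta^\lambda$ are not summands of $V^{\otimes\bullet}$ cut out by idempotents: one must replace the semisimple isotypic decomposition of $V^{\otimes n}$ by a suitable Weyl/good filtration, check that $F$ respects it, and establish both the fat-hook vanishing and the polynomial estimate for ${\rm FPdim}(\Delta^\lambda W)$ directly. In characteristic zero one can instead sidestep the difficulty: by Deligne's theorem (Theorem~\ref{Del}), $\mathcal C={\rm Rep}(G,z)$, the composition factors of $V^{\otimes n}$ are then irreducible $G$-modules with highest weights lying in $n$ times a fixed polytope, and the Weyl dimension formula for the reductive part of $G$ furnishes the polynomial bound at once.
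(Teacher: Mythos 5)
Your reduction of (ii) to (i) is correct and is exactly the argument the paper leaves implicit behind ``(ii) follows immediately from (i)'': additivity and multiplicativity of ${\rm FPdim}$ plus the lower bound ${\rm FPdim}\ge 1$ for nonzero objects give $\beta(V)\le {\rm FPdim}(F(V))$ unconditionally and $\beta(V)\ge {\rm FPdim}(F(V))$ from the polynomial bound. Your characteristic-zero treatment of (i) (pass to $\Rep(G,z)$, note that highest weights of composition factors of $V^{\otimes n}$ grow linearly, apply the Weyl dimension formula) also matches the paper.

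However, in the main case $p>0$ your argument for (i) has a genuine gap, which you yourself flag: the Schur--Weyl route requires (a) that every composition factor of $V^{\otimes n}$ occur in some Schur functor $\Delta^\lambda V$ with $\lambda$ confined to a fat hook of bounded width, (b) that $F$ commute with these functors, and (c) a polynomial estimate for ${\rm FPdim}(\Delta^\lambda F(V))$ --- none of which you establish in characteristic $p$, where $V^{\otimes n}$ has no isotypic decomposition over $\KK[S_n]$ and the $\Delta^\lambda$ are not cut out by idempotents. The paper avoids all of this by exploiting the hypothesis that $F$ exists: by Tannakian reconstruction internal to ${\rm Ver}_p$, one may assume $\mathcal C$ is tensor-generated by $V$ and identify $\mathcal C\cong\Rep(G)$ for an affine group scheme $G=\underline{\rm Aut}(F)$ of finite type in ${\rm Ver}_p$; writing $F(V)=\oplus_i L_i\otimes V_i$, the ``even part'' $G_0\subset\prod_i GL(V_i)$ is an ordinary affine group scheme of \emph{finite index} in $G$ (this is the key input, from Venkatesh's work cited as [Ve]), so composition factors of $V^{\otimes n}$ restrict to boundedly many simple $G_0$-modules sitting inside $F(V)^{\otimes n}$, and the bound reduces to simple $GL_m(\KK)$-modules of highest weight $O(n)$, whose dimensions are controlled polynomially by the Weyl dimension formula for Weyl modules. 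So the missing idea is precisely this reduction to the finite-index even subgroup via the ${\rm Ver}_p$-Tannakian formalism; without it (or without carrying out the characteristic-$p$ Schur--Weyl program you sketch, which is substantially harder), part (i) --- and hence the lower bound $\beta(V)\ge{\rm FPdim}(F(V))$ in (ii) --- remains unproved.
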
 

\begin{proof} 
(i) Consider first the case of characteristic $p>0$. We may assume that $\mathcal C$ is tensor generated by $X$. We have $F(X)=\oplus_{i=1}^{p-1}L_i\otimes V_i$, where $V_i$ are finite dimensional vector spaces. 
Let $G=\underline{\rm Aut}_\otimes(F)\subset \underline{GL}(F(X))$ be the corresponding affine group scheme of finite type 
in ${\rm Ver}_p$. Let $G_0\subset \prod_{i=1}^{p-1} \underline{GL}(V_i)$ be the classical part of $G$, i.e., $\mathcal O(G_0)=\mathcal O(G)/I$, where $I$ is the ideal generated by the non-trivial $\Ver_p$-subobjects of $\mathcal O(G)$. Then $G_0$ is a usual affine group scheme and the algebra $\mathcal O(G/G_0)$ is finite dimensional (see \cite{Ve}, Proposition 7.10); indeed, the algebra $\mathcal O(G/G_0)$ is generated by finitely many $L_i$ for $i>1$ and the symmetric algebra $S(L_i)$ is finite dimensional for such $i$. 

Now let $V$ be a simple $G$-module and $Y$ a simple quotient
of $V|_{G_0}$. Then by Frobenius reciprocity $\Hom_G(V, {\rm Coind}_{G_0}^G Y)=\Hom_{G_0}(V|_{G_0},Y)\ne 0$, so 
$V$ is a subobject of ${\rm Coind}_{G_0}^G Y$. Thus 
$$
\ell(F(V))\le \ell(F({\rm Coind}_{G_0}^G Y))=\ell(F((\mathcal O(G)\otimes Y)^{G_0}))=\ell(F(\mathcal O(G/G_0)))\dim_\KK Y,
$$
as $G_0$ acts freely on $G$ (see \cite{Ve}, Proposition 7.12). But $Y$ is a composition factor in $(X|_{G_0})^{\otimes n}$. So it suffices to establish the required bound for representations of $G_0$. In other words, it is enough to show that for every simple $G_0$-module $X$ there exist $C,r$ such that 
for every composition factor $Y$ of $X^{\otimes n}$ we have 
\begin{equation}\label{inequa1}
\dim_\KK Y\le Cn^r.
\end{equation} 
Thus it is enough to establish \eqref{inequa1} for the tautological representation $X=\KK^m$ of   
$GL_m(\KK)$. But this is easy, since dimensions of Weyl modules for $GL_m(\KK)$, given by the Weyl dimension formula, grow polynomially with the highest weight. 

The same proof applies to the case ${\rm char}(\KK)=0$, where instead of \cite{Ve} we can use the well-known fact that the even part $G_0$ of an algebraic supergroup $G$ is of finite index in $G$. 

(ii) follows immediately from (i).  
\end{proof}

\begin{rem} Another proof of Theorem \ref{gro}(ii) is given in \cite{CEO}, Lemma 8.5. In fact, 
there a stronger result is proved: the target category $\Ver_p$ can be replaced by 
any symmetric tensor category with finitely many simple objects. 
\end{rem} 

Note that by Deligne's theorem, in characteristic zero the functor $F$ for a category of moderate growth always exists, so 
the conclusions of Theorem \ref{gro} always hold. Also, 
Theorem \ref{mainceo} implies the following result in characteristic $p$.

\begin{thm}\label{semiconj} In any Frobenius exact (in particular, semisimple) symmetric tensor category 
of moderate growth the map $V\mapsto {\rm gd}(V)$ defines a homomorphism ${\rm Gr}(\mathcal C)\to \Bbb R$, 
and ${\rm gd}(V)$ is an algebraic integer.  
\end{thm} 

We actually expect that this theorem holds even without the Frobenius exactness assumption. In particular, by \cite{CEO}, Lemma 8.5, it holds for categories satisfying Conjecture \ref{beoconj}.

\subsection{Some applications to modular representation theory}

Now we will discuss some applications of the theory of tensor categories to modular representation theory, partly following \cite{CEO}, Section 8. More precisely, we will consider the problem of describing non-negligible indecomposable summands in $V^{\otimes n}\otimes V^{*\otimes m}$, where $V$ is a finite dimensional representation of a finite group $G$ (or, more generally, affine group scheme) over a field of characteristic $p$. This type of questions is discussed in \cite{B,B2,BS}.

\subsubsection{Tensor powers of a representation}
Let $G$ be an affine group scheme over an algebraically closed field $\KK$ of characteristic $p$. Let $V\in \Rep(G)$. Let $\overline d_n(V)$ be the number of indecomposable non-negligible direct summands in $V^{\otimes n}$ (counted with multiplicities). We have $\overline d_n(V)\le (\dim_\KK V)^n$. It is clear 
that $\overline d_n(V)=d_n(\overline V)$, where $\overline V$ is the image of $V$ in the semisimplification of $\Rep(G)$, so this sequence enjoys the properties described in the previous subsection. In particular, 
we can define 
$$
\delta(V):=\lim_{n\to \infty}\overline d_n(V)^{1/n}={\rm gd}(\overline V).
$$ 

Let $q:=e^{\frac{\pi i}{p}}$, and $[n]_q:=\frac{q^n-q^{-n}}{q-q^{-1}}=\frac{\sin \frac{\pi n}{p}}{\sin \frac{\pi}{p}}$. It is easy to see that if $a,b$ are positive integers with $a+b\le \frac{p-1}{2}$ then 
$$
[a]_q+[b]_q\ge [a+b]_q.
$$ 
Hence if $a_1,...,a_r$ are positive integers 
with $\sum_j a_j=d\le \frac{p-1}{2}$ then 
$$
\sum_j [a_j]_q\ge [d]_q.
$$
Since $[a]_q$ increases with $a$ as $1\le a\le \frac{p-1}{2}$, this implies that 
\begin{equation}\label{inequa} 
1\le a_j,d\le \frac{p-1}{2},\ \sum_j a_j\ge d\implies \sum_j [a_j]_q\ge [d]_q.
\end{equation} 

\begin{thm}\label{finti}\footnote{Parts (i),(ii) and (v) are in \cite{CEO}, Theorem 8.15.} (i) There exist unique non-negative integers $m_j$, $j=1,...,p-1$ such that
$$
\delta(V)=\sum_{k=1}^{p-1}[k]_qm_k
$$
and for $p>2$
$$
\delta(S^2V)-\delta(\wedge^2V)=\sum_{k=1}^{p-1}[k]_{q^2}m_k.
$$

(ii) $\dim_\KK V-\sum_{k=1}^{p-1}km_k$ is divisible by $p$. 

(iii) If $\dim_\KK V\le p-1$ then $\dim_\KK V=\sum_{k=1}^{p-1}km_k$. Thus, either $m_{p-1}=0$, so $\dim_\KK V=\sum_{k=1}^{p-2} km_k,$
or $\dim_\KK V=p-1$ and $m_1=...=m_{p-2}=0$, $m_{p-1}=1$, in which case
$\delta(V)=1$.

(iv) If $G$ is of finite order divisible by $p$ (i.e., $\dim \mathcal O(G)$ is finite and divisible by $p$) and $V$ is faithful then
$\delta(V)<\dim_\KK V$.

(v) If $\dim V=d\in \Bbb F_p$, $d\ne 0$, viewed as a number $1\le d\le p-1$ in $\Bbb Z$, then
$$
\delta(V)\ge [d]_q.
$$
\end{thm}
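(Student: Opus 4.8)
The plan is to transport everything into $\Ver_p$ through the fiber functor. By Conjecture~\ref{ostconj}, applied to $\langle\overline V\rangle$ (this holds, e.g., when $V$ is of finite type, by Theorem~\ref{ostth}), there is a symmetric tensor functor $F\colon\langle\overline V\rangle\to\Ver_p$, unique up to a unique isomorphism, and I would \emph{define} the integers $m_k$ by $F(\overline V)\cong\bigoplus_{k=1}^{p-1}L_k^{\oplus m_k}$; this is meaningful and the $m_k$ are unique because $\Ver_p$ is semisimple. Two facts about $\Ver_p$ are used throughout: ${\rm FPdim}(L_k)=[k]_q$, and the categorical dimension of $L_k$ is $\dim L_k=k\cdot 1_\KK\in\mathbb F_p$ (because $L_k=\overline{J_k}$, $\dim_\KK J_k=k$, and the semisimplification functor $S$ is symmetric monoidal).

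For (i), Theorem~\ref{gro}(ii) applied to $F$ gives $b(V)=\beta(\overline V)={\rm FPdim}(F(\overline V))=\sum_k[k]_q m_k$. For the second identity, note that $S$ and $F$ are symmetric monoidal and $p>2$, so they commute with $S^2$ and $\wedge^2$ (isotypic summands for $\mathbb Z/2$, invertible in $\KK$); hence $F(\overline{S^2V})=S^2F(\overline V)$ and $F(\overline{\wedge^2V})=\wedge^2F(\overline V)$, so that $b(S^2V)-b(\wedge^2V)={\rm FPdim}(S^2F(\overline V))-{\rm FPdim}(\wedge^2F(\overline V))$. Now $W\mapsto[S^2W]-[\wedge^2W]$ is a ring endomorphism of ${\rm Gr}(\Ver_p)$: it is additive because the term $[A\otimes B]$ in $[S^2(A\oplus B)]-[\wedge^2(A\oplus B)]$ cancels, and multiplicative by the plethysm identities $S^2(A\otimes B)=(S^2A\otimes S^2B)\oplus(\wedge^2A\otimes\wedge^2B)$ and $\wedge^2(A\otimes B)=(S^2A\otimes\wedge^2B)\oplus(\wedge^2A\otimes S^2B)$. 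Composing with ${\rm FPdim}$ yields a ring homomorphism ${\rm Gr}(\Ver_p)\to\mathbb R$; evaluated on the generator $[L_2]$, using $L_2\otimes L_2=L_1\oplus L_3$, it returns $[3]_q-1=2\cos(2\pi/p)=[2]_{q^2}$, so it coincides with the homomorphism $[L_k]\mapsto[k]_{q^2}$. Hence ${\rm FPdim}(S^2L_k)-{\rm FPdim}(\wedge^2L_k)=[k]_{q^2}$, and additivity gives the claimed formula. Part (ii) is then immediate: categorical dimension is preserved by the symmetric monoidal functors $S$ and $F$, so $\dim_\KK V\cdot 1_\KK=\dim V=\dim\overline V=\dim F(\overline V)=\big(\sum_k k\,m_k\big)\cdot 1_\KK$ in $\mathbb F_p$.

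For (iii), set $d=\dim_\KK V\le p-1$. Every indecomposable summand of $V$ has dimension $\le d<p$, hence prime to $p$, hence is non-negligible; thus $\overline V\neq 0$, so $F(\overline V)\neq 0$ and $N:=\sum_k k\,m_k\ge 1$, while $N\equiv d\pmod p$ by (ii). It suffices to show $N\le p-1$. Since $\wedge^jL_k=\overline{\wedge^jJ_k}$ is nonzero exactly for $0\le j\le k$ (its dimension $\binom kj$ is prime to $p$ when $k<p$) and $\wedge^kL_k=\overline{\det J_k}=\mathbbm 1$, one checks that $\wedge^N F(\overline V)\cong\mathbbm 1\neq 0$ while $\wedge^{N+1}F(\overline V)=0$. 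If $d\le p-2$, then $\wedge^{d+1}V=0$ with $d+1<p$, so $\wedge^{d+1}\overline V=0$ and $\wedge^{d+1}F(\overline V)=0$, forcing $N\le d$. If $d=p-1$, then $\wedge^{p-1}\overline V=\overline{\det V}$ is invertible, so $\wedge^{p-1}F(\overline V)$ is an invertible object of $\Ver_p$; writing $\wedge^{p-1}F(\overline V)$ as the sum over admissible tuples $(n_i)$ with $\sum n_i=p-1$, $0\le n_i\le k_i$, over the constituents $L_{k_i}$ of $F(\overline V)$, invertibility forces a unique such tuple and hence $\sum_i k_i=N\le p-1$. Either way $N=d$, and the dichotomy follows: $m_{p-1}\ge 1$ forces $d=N\ge p-1\ge d$, whence $d=p-1$, $m_{p-1}=1$, all other $m_k=0$, and $b(V)=[p-1]_q=1$.

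For (iv), let $G$ be finite with $p\mid|G|$ and $V$ faithful. Since $[k]_q\le k$ with equality only at $k=1$, one has $b(V)=\sum_k[k]_qm_k\le\sum_k k\,m_k=N$, and the inequality is strict unless $F(\overline V)\cong\mathbbm 1^{\oplus m_1}$. The first step I would carry out is the general bound $N\le\dim_\KK V$: for $\dim_\KK V\le p-1$ this is (iii), and for $\dim_\KK V\ge p$ one restricts $V$ to a Sylow $p$-subgroup $Q$ (on which $V$ remains faithful) and compares $b(V)$ with $b(V|_Q)$. Granting this, $b(V)<\dim_\KK V$ except possibly when $F(\overline V)\cong\mathbbm 1^{\oplus m_1}$ with $m_1=\dim_\KK V$; but then for each indecomposable summand $V_a$ of $V$, ${\rm FPdim}(F(\overline{V_a}))=b(V_a)\le\dim_\KK V_a$ together with $\sum_a\dim_\KK V_a\le\dim_\KK V=\sum_a{\rm FPdim}(F(\overline{V_a}))$ forces each $V_a$ one-dimensional and $V$ to have no negligible summand, so $V$ is a direct sum of characters; then $G$ embeds in a torus and has order prime to $p$, contradicting $p\mid|G|$. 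The hard part is exactly this (iv): the general dimension bound $N\le\dim_\KK V$ for $\dim_\KK V\ge p$, and the analysis of the degenerate case $F(\overline V)\cong\mathbbm 1^{\oplus m_1}$, are where genuine work is needed, principally because the semisimplification functor is not compatible with restriction to subgroups.
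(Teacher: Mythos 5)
Parts (i)--(iii) of your argument follow the paper's route, and your self-contained derivation of ${\rm FPdim}(S^2L_k)-{\rm FPdim}(\wedge^2L_k)=[k]_{q^2}$ via the Adams operation on ${\rm Gr}(\Ver_p)$ is a legitimate substitute for the citation to \cite{EOV} that the paper uses. There are, however, two genuine gaps. The first is the uniqueness claim in (i): the theorem asserts that the $m_k$ are the \emph{unique} non-negative integers satisfying the two displayed numerical identities, whereas you only observe that the multiplicities in the semisimple decomposition of $F(\overline V)$ are well defined. Since $[k]_q=[p-k]_q$, the first identity alone determines only $m_k+m_{p-k}$, so the second identity must actually be used: the paper notes that $[k]_q$, $1\le k\le\frac{p-1}{2}$, form a $\mathbb Q$-basis of $\mathbb Q(q+q^{-1})$ and applies the Galois automorphism $q^2\mapsto-q$ to the second identity to obtain $\sum_k(-1)^{k-1}[k]_qm_k$, hence $m_k-m_{p-k}$. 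This step is absent from your write-up.

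The second and more serious gap is part (iv), which you do not prove: you reduce it to the bound $\sum_kkm_k\le\dim_\KK V$ when $\dim_\KK V\ge p$ and to excluding the case $F(\overline V)\cong\mathbbm 1^{\oplus m_1}$, and you leave both steps as unexecuted sketches. Neither is routine: for $\dim_\KK V\ge p$ the exterior-power argument of (iii) breaks down because $\wedge^j$ with $j\ge p$ is no longer a direct summand of $V^{\otimes j}$ (so it need not commute with semisimplification), and $b(V_a)=\dim_\KK V_a$ does not by itself force $\dim_\KK V_a=1$. The paper's proof of (iv) avoids all of this with a direct argument: $G$ is a closed subgroup scheme of $GL(V)\subset V\otimes V^*\oplus V\otimes V^*$, so $\mathcal O(G)$ is a quotient of some $V^{\otimes n}\otimes V^{*\otimes m}$; writing $V^*=\wedge^{\dim_\KK V-1}V\otimes\chi$ makes $\mathcal O(G)$ a quotient, hence (by projectivity) a direct summand, of some $V^{\otimes N}$; since $p$ divides $|G|$ this summand is negligible, whence $\overline c_N(V)\le\widetilde c_N(V)\le(\dim_\KK V)^N-|G|$ and $b(V)\le\bigl((\dim_\KK V)^N-|G|\bigr)^{1/N}<\dim_\KK V$. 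You should replace your treatment of (iv) with an argument of this kind.
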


Note that the bound in (v) is sharp (achieved for $G=\Bbb Z/p$). Moreover, the same proof shows that more generally, in any semisimple symmetric tensor category of moderate growth in characteristic $p$, for an object 
$V$ of dimension $d\ne 0$, we have ${\rm gd}(V)\ge [d]_q$. 

\begin{example} For $p=2,3$ we get that $\delta(V)$ is an integer. For $p=5$ we have $\delta(V)=n_1+\frac{1+\sqrt{5}}{2}n_2$ for non-negative integers $n_1,n_2$.
\end{example}

\begin{proof} (i) Let $\mathcal C=\langle \overline{V}\rangle$ and 
$F: \mathcal C\to {\rm Ver}_p$ be the fiber functor. Then $F(\overline{V})=\oplus_{k=1}^{p-1}m_kL_k$ and ${\rm FPdim}(L_k)=[k]_q$, so
$$
{\rm FPdim}(F(\overline{V}))=\sum_{k=1}^{p-1}[k]_q m_k
$$
and for $p>2$
$$
{\rm FPdim}(F(S^2\overline{V}))-{\rm FPdim}(F(\wedge^2\overline V))=
\sum_{k=1}^{p-1}[k]_{q^2}m_k,
$$
see \cite{EOV}, Proposition 4.5.

Let $p>2$. Since $[k]_q=[p-k]_q$, and $[k]_q$ form a $\Bbb Q$-basis of
$\Bbb Q(q+q^{-1})$ when $1\le k\le \frac{p-1}{2}$,
we see that $\delta(V)$ determines $m_k+m_{p-k}$.
On the other hand, applying the Galois automorphism $g$
such that $g(q^2)=-q$, we get
$$
g({\rm FPdim}(F(S^2\overline{V}))-{\rm FPdim}(F(\wedge^2\overline V)))=
\sum_{k=1}^{p-1}(-1)^{k-1}[k]_{q}m_k,
$$
which determines $m_k-m_{p-k}$. Thus $m_k$ are uniquely determined, as claimed.

(ii) The image of $\dim_\KK V$ in $\Bbb F_p$
is $\dim \overline V=\sum_{k=1}^{p-1}km_k\in \Bbb F_p$.
Thus $\dim_\KK V-\sum_{k=1}^{p-1}km_k$ is divisible by $p$.

(iii) Let $d=\dim_\KK V$. If $d<p-1$ then $\wedge^d V=\KK$, $\wedge^{d+1} V=0$,
so $\wedge^d \overline V=\mathbbm{1}$, $\wedge^{d+1}\overline V=0$. Hence
$m_{p-1}=0$ and $d=\sum_k km_k$, as claimed. On the other hand, if $d=p-1$
then still $\wedge^{p-1}\overline V=\mathbbm{1}$. So if $m_{p-1}\ne 0$ then $m_{p-1}=1$
and $m_j=0$ for $j<p-1$. On the other hand, if $m_{p-1}=0$ then
we must have $p-1=\sum_k km_k$ as before.

(iv) Since $G$ is a closed subgroup scheme of $GL(V)$, which in turn is closed in the space $\End V\oplus \End V^*=V\otimes V^*\oplus V\otimes V^*$, the regular $G$-module $\mathcal O(G)$ with action of $G$ by right translations 
is a quotient of $S(V\otimes V^*\oplus V\otimes V^*)$. 
Since $\mathcal O(G)$ is finite dimensional, this implies 
that it is a quotient of $S^{\le r}(V\otimes V^*\oplus V\otimes V^*)$
for some $r$. So, since $\mathcal O(G)$ is projective, it must be a direct summand
of $S^{\le r}(V\otimes V^*\oplus V\otimes V^*)$. Thus every indecomposable 
projective $G$-module $P$ is a direct summand of $S^m(V\otimes V^*\oplus V\otimes V^*)$ for some $m$. So $P$ is a quotient (hence direct summand) of $V^{\otimes m}\otimes V^{*\otimes m}$. But $V^*=\wedge^{\dim V-1}V\otimes \chi$, where $\chi$ is the top exterior power of $V^*$. Thus $P\otimes \chi^{-m}$ is a quotient (hence direct summand) of $V^{\otimes N}$ where $N:=n\dim V$. Thus $\delta(V)\le (\dim(V)^N-\dim(P))^{\frac{1}{N}}$ since $P\otimes \chi^{-m}$ is negligible (as $p$ divides $|G|$). It follows that $\delta(V)<\dim V$.  

(v)  We may assume that $p>2$ and, by tensoring with the odd line if needed, that $1\le d\le \frac{p-1}{2}$. 
We have $\sum_{k=1}^{p-1} km_k=d\in \Bbb F_p$.  
Thus 
$$
\sum_{k=1}^{\frac{p-1}{2}}k(m_k-m_{p-k})=d+pr\in \Bbb Z
$$
for some $r\in \Bbb Z$. Hence 
$$
\sum_{k=1}^{\frac{p-1}{2}}k(m_k+m_{p-k})\ge \sum_{k=1}^{\frac{p-1}{2}}k|m_k-m_{p-k}|\ge |d+pr|\ge d.
$$
Hence by \eqref{inequa}, 
$$
\delta(V)=\sum_{k=1}^{p-1}[k]_qm_k=\sum_{k=1}^{\frac{p-1}{2}}[k]_q(m_k+m_{p-k})\ge [d]_q,
$$
as claimed.    
\end{proof}

\begin{rem} One can also define the invariant $\gamma(V)=\lim_{n\to \infty}\widetilde d_n(V)^{1/n}$, where $\widetilde d_n(V)$ is the {\it dimension} of the non-negligible part of $V$ (\cite{B},1.4). We have $\widetilde d_n(V\otimes W)\le \widetilde d_n(V)\widetilde d_n(W)$, so 
the limit exists by Fekete's lemma, and for a non-negligible $V$, we have 
$$
1\le \gamma(V)={\rm inf}_{n\ge 0}\widetilde d_n(V)^{1/n}\le \dim_\KK V.
$$ 
We also have $\delta(V)\le \gamma(V)$, 
 $\gamma(V^*)=\gamma(V)$,
$\gamma(V)^{\otimes n}=\gamma(V)^n$, 
$\gamma(V\oplus W)\le \gamma(V)+\gamma(W)$, $\gamma(V\otimes W)\le \gamma(V)\gamma(W)$.

Unfortunately, $\gamma(V)$ is much harder to study that $\delta(V)$, even though they are expected to coincide, which would imply that $\gamma=\delta$ is an algebra homomorphism. 
The equality $\gamma(V)=\delta(V)$ would follow from D. Benson's conjecture that dimensions of indecomposable summands in $V^{\otimes n}$ grow polynomially (or at least slower than exponentially) with $n$. 
For example, D. Benson conjectured that if $\delta(V)=1$ then $\gamma(V)=1$, and moreover the 
dimension $\widetilde d_n(V)$ of the unique non-negligible indecomposable summand of $V^{\otimes n}$ grows polynomially with $n$. This is not known even in the simplest nontrivial examples. 
\end{rem} 

\begin{example} Assume $G$ is finite of order divisible by $p$ and $V$ is faithful and indecomposable.

{\bf 1.} Assume $\dim_\KK V=2$. If $p=3$ then $\delta(V)<2$, so we see that $\delta(V)=1$.
In this case $\overline V$ generates ${\rm sVec}_\KK$.
If $p\ge 5$ then we have $\dim_\KK V=2=m_1+2m_2$, $\delta(V)=m_1+[2]_qm_2$.
So the only option is $m_1=1,m_2=0$, i.e., $\delta(V)=[2]_q$ and $\overline V=L_2$
generates ${\rm Ver}_p$. Example: $V$ is the 2-dimensional indecomposable representation of $\Bbb Z/p$.

{\bf 2.} Assume $\dim_\KK V=3$ and $p\ge 5$. We have
$$
3=m_1+2m_2+3m_3,\
\delta(V)=m_1+[2]_qm_2+[3]_qm_3.
$$
So we have

{\bf Case 1.} $m_1=m_2=0,m_3=1$ and $\delta(V)=[3]_q$. Example:
 $V$ is the $3$-dimensional indecomposable representation of $\Bbb Z/p$.

{\bf Case 2.} $m_1=m_2=1,m_3=0$ and $\delta(V)=1+[2]_q$.

 To distinguish between these cases, we observe that in Case 1, $S^{p-2}V$ is negligible, while in Case 2 it is not. Using this criterion, it can be shown that Case 2 does not actually occur.\footnote{D. Benson, private communication.}

{\bf 3.} Assume that $\dim_\KK V=4$ and $p\ge 5$. Then we are in one of the following cases.

{\bf Case 1.} $m_1=m_2=m_3=0,m_4=1$, $\delta(V)=[4]_q$. So $V$ generates
${\rm sVec}_\KK$ for $p=5$ and ${\rm Ver}_p$ for $p\ge 7$.
Example: $G=\Bbb Z/p$, $V$ the $4$-dimensional indecomposable representation.

{\bf Case 2.} $m_1=0,m_2=2,m_3=0$, so $\delta(V)=2[2]_q$. Example:
$G=\Bbb Z/2\ltimes (\Bbb Z/p)^2$ (action of $\Bbb Z/2$ by swap), and
$V=V_1\oplus V_2$, where $V_1,V_2$ are the 2-dimensional indecomposable representations of the two copies of $\Bbb Z/p$ (on which the other copy acts trivially).

{\bf Case 3.} $m_1=1, m_2=0, m_3=1$, so $\delta(V)=1+[3]_q=[2]_q^2$.
Example: same as Case 2 except $V=V_1\otimes V_2$.

{\bf Case 4.} $m_1=2, m_2=1,m_3=0$, so $\delta(V)=2+[2]_q$. We expect that this never happens.
\end{example}

\subsubsection{Characteristic 2}
Consider now the case of characteristic $p=2$. In this case, according
to Theorem \ref{ostconj}, the category  $\mathcal C=\langle \overline V\rangle$ is Tannakian.
Thus, $\mathcal C=\Rep(\overline G_V)$ for a suitable linearly reductive finite type affine group scheme $\overline G_V$ (i.e., one whose representation category is semisimple). By a theorem of Nagata (\cite{N}), such a group scheme can be included in a short exact sequence
\begin{equation}\label{exsec}
1\to A_V^\vee\to \overline G_V\to \Gamma_V\to 1,
\end{equation}
where $\Gamma_V$ is a finite group of odd order, $A_V$  is a finitely generated abelian group without odd torsion, and $A_V^\vee$ is the dual group scheme of $A_V$.

This is closely related to the following conjecture of D. Benson (\cite{B2}, Conjecture 1.1), supported by ample computer evidence and proofs in special cases.

\begin{conj}\label{beco}
  Let $G$ be a finite $2$-group and $V$ an indecomposable odd dimensional representation of $G$ over an algebraically closed field of characteristic $2$ and let us decompose $V \otimes V^{*}$ as the direct sum $\KK \oplus Q$. Then, $Q$ is a direct sum of even-dimensional indecomposable representations.\footnote{In fact, it is conjectured in \cite{B2} that the dimensions of the summands in $W$ are moreover divisible by $4$.}
\end{conj}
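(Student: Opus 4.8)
The plan is to reduce Conjecture~\ref{beco} to a statement about invertible objects in the semisimplification $\overline{\Rep(G)}$ and then feed it into the classification machinery of Lecture~3. First the reduction. Since $V$ is indecomposable with $\dim_\KK V$ odd, its categorical dimension in $\overline{\Rep(G)}$ is $\dim_\KK V\bmod 2=1\neq 0$, so $\overline V$ is a simple object. Applying the semisimplification functor $S$ to the given decomposition $V\otimes V^*\cong\KK\oplus Q$ yields $\overline V\otimes\overline V^*\cong\mathbbm 1\oplus\overline Q$ in $\overline{\Rep(G)}$. Now the assertion ``$Q$ is a direct sum of even-dimensional indecomposables'' is exactly the assertion that $Q$ is negligible, i.e.\ $\overline Q=0$, i.e.\ $\overline V\otimes\overline V^*\cong\mathbbm 1$, i.e.\ $\overline V$ is \emph{invertible} (equivalently, by Corollary~\ref{lowerbo}, $b(V)=1$). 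The ``multiplicity one'' clause of the conjecture is then automatic, since the multiplicity of $\KK$ as a summand of $V\otimes V^*$ equals that of $\mathbbm 1$ in $\overline V\otimes\overline V^*$, which is $\dim_\KK\mathrm{End}(\overline V)=1$. So it suffices to show: \emph{$\overline V$ is an invertible object of $\overline{\Rep(G)}$.}

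For this, replace $G$ by the image of its action on all the tensor powers $V^{\otimes n}\otimes V^{*\otimes m}$ — still a finite $2$-group, and harmless since the conjecture only involves $V$ and $V\otimes V^*$ — so that $\overline V$ tensor-generates the semisimple symmetric tensor category $\mathcal C:=\overline{\Rep(G)}$, which has moderate growth because $\overline c_n(V)\le(\dim_\KK V)^n$. If $V$ is of finite type (for instance when $G$ is cyclic), then $\mathcal C$ is a symmetric fusion category and Ostrik's Theorem~\ref{ostth} supplies a fiber functor $\mathcal C\to\Ver_2=\Vecc_\KK$; in general this is Conjecture~\ref{ostconj}. In either case $\mathcal C\cong\Rep(\overline G_V)$ is Tannakian with $\overline G_V$ linearly reductive of finite type, so by Nagata's theorem it sits in the exact sequence~\eqref{exsec}, $1\to A_V^\vee\to\overline G_V\to\Gamma_V\to 1$, with $\Gamma_V$ finite of odd order and $A_V^\vee$ diagonalizable. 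As $\Rep$ of a diagonalizable group scheme consists of graded vector spaces, every simple $A_V^\vee$-representation is one-dimensional, hence invertible; so it now suffices to prove that $\Gamma_V=1$.

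If $\Gamma_V\neq 1$, then, groups of odd order being solvable (Feit--Thompson), $\Gamma_V$ has a quotient $\mathbb Z/\ell$ for some odd prime $\ell$; pulling back representations along $\overline G_V\twoheadrightarrow\Gamma_V\twoheadrightarrow\mathbb Z/\ell$ gives a fully faithful symmetric tensor embedding $\Rep(\mathbb Z/\ell)\hookrightarrow\mathcal C$, hence an invertible object of $\mathcal C$ of order $\ell$. Unwinding, this means there is an indecomposable $\KK G$-module $W$ with $W\otimes W^*\cong\KK\oplus(\text{negligible})$ and $W^{\otimes\ell}\cong\KK\oplus(\text{negligible})$ while $\overline W\not\cong\mathbbm 1$. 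Thus Conjecture~\ref{beco} follows once one establishes the crux: \emph{for a finite $2$-group $G$ the Picard group of $\overline{\Rep(G)}$ has no $\ell$-torsion for any odd prime $\ell$}. I expect this to be the main obstacle. The difficulty is that $S\colon\Rep(G)\to\overline{\Rep(G)}$ is not a tensor functor, so there is no homomorphism $G\to\overline G_V$ along which to transport the hypothesis that $G$ is a $2$-group; moreover the invertible objects of $\overline{\Rep(G)}$ form a class strictly larger than the group $T(G)$ of endotrivial modules (a negligible $\KK G$-module need not be projective), so the known fact that $T(G)$ has $2$-power torsion for $p=2$ does not apply directly. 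I would attack the crux either (a) by a direct modular-representation-theoretic argument in the spirit of \cite{B2} — using Auslander--Reiten theory and cohomological support varieties to show that an indecomposable $W$ over a $2$-group with $W\otimes W^*\cong\KK\oplus(\text{even-dimensional})$ generates a subgroup of $\mathrm{Pic}(\overline{\Rep(G)})$ of $2$-power order — or (b) by deriving it from a form of Conjecture~\ref{beoconj}, since an embedding $\overline{\Rep(G)}\hookrightarrow\Ver_{2^\infty}$ would induce $\mathrm{Pic}(\overline{\Rep(G)})\hookrightarrow\mathrm{Pic}(\Ver_{2^\infty})$, and the latter is a $2$-group. Either route requires input genuinely beyond the categorical formalism reviewed above.
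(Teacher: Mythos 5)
This statement is a \emph{conjecture} (Benson's Conjecture 1.1 from \cite{B2}); the paper does not prove it, and neither do you. Your reduction is sound and in fact reproduces the paper's own discussion almost verbatim: the paper observes that Conjecture \ref{beco} is precisely the assertion that $\overline{\Rep_\KK(G)}$ is pointed, invokes Conjecture \ref{ostconj} (or Theorem \ref{ostth} in the finite-type case) to write $\langle\overline V\rangle\cong\Rep(\overline G_V)$, and uses Nagata's sequence \eqref{exsec} to isolate the odd-order quotient $\Gamma_V$ as the obstruction. The individual steps you carry out are correct: $\overline V$ is simple because $\dim_\KK V$ is odd, $S$ is additive and monoidal so it carries $V\otimes V^*\cong\KK\oplus Q$ to $\overline V\otimes\overline V^*\cong\mathbbm 1\oplus\overline Q$, and in characteristic $2$ the negligible indecomposables are exactly the even-dimensional ones, so the conjecture is equivalent to invertibility of $\overline V$.

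The genuine gap is the one you yourself flag: you do not establish that $\Gamma_V=1$ (equivalently, that $\mathrm{Pic}(\overline{\Rep_\KK(G)})$ has no odd torsion), and this is where all the content of Benson's conjecture lives — it is an open problem, supported only by computer evidence and special cases. Two further remarks. First, your crux is actually \emph{stronger} than Conjecture \ref{beco}: the conjecture only requires that every simple of $\langle\overline V\rangle$ be invertible, which is compatible with the grading group having odd torsion (e.g.\ $\langle\overline V\rangle\cong\Vecc_{\Z/\ell}$ with $\ell$ odd would satisfy \ref{beco} but have $\Gamma_V\ne 1$); what you reduce to is the paper's strengthened Conjecture \ref{gammatriv}. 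Second, even the reduction itself is conditional on Conjecture \ref{ostconj} unless $V$ is of finite type, as you note. So the proposal should be read as a correct conditional reformulation plus an honest identification of the open core, not as a proof.
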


In the language of tensor categories, Conjecture \ref{beco} says that the semisimplification
$\overline{{\rm Rep}_\KK(G)}$ is a {\bf pointed category}, i.e., the symmetric tensor category
${\rm Vec}_{A}$ of vector spaces graded by an abelian group $A=A_G$, and in particular for any indecomposable odd-dimensional $V\in \Rep_\KK(G)$, the subcategory
$\langle \overline V\rangle$ is ${\rm Vec}_C$ for a cyclic group $C$.

Moreover, computer evidence collected by D. Benson (private communication) suggests that $C=\Bbb Z$ or has order a power of $2$. Such strengthened version of the conjecture would simply be equivalent to

\begin{conj}\label{gammatriv} If $G$ is a finite 2-group then we always have $\Gamma_V=1$. In other words,
 $\overline{\Rep_\KK(G)}={\rm Vec}_A$ for some abelian group $A$
without odd torsion.
\end{conj}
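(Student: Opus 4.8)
The plan is to reduce Conjecture~\ref{gammatriv} to a statement purely about $\KK G$-modules — essentially Benson's Conjecture~\ref{beco} — and then to attack that.

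\textbf{Step 1: reduction to pointedness.} Since $\Rep_\KK(G)$ has moderate growth, so does the semisimple symmetric tensor category $\overline{\Rep_\KK(G)}$; by Theorem~\ref{ostth} when $G$ is cyclic (then $\Rep_\KK(G)$ has finite representation type, so $\overline{\Rep_\KK(G)}$ is fusion) and by Conjecture~\ref{ostconj} in general, together with $\Ver_2=\Vecc_\KK$, the category $\overline{\Rep_\KK(G)}$ is Tannakian, $\overline{\Rep_\KK(G)}\cong\Rep(\mathcal G)$ for a linearly reductive affine group scheme $\mathcal G$ over $\KK$, and \eqref{exsec} presents $\mathcal G$ as an extension of a (pro-)finite group $\Gamma$ of odd order by a diagonalizable group scheme. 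So $\Gamma=1$ iff $\mathcal G$ is diagonalizable iff $\overline{\Rep_\KK(G)}$ is pointed. Here it is useful to record the elementary fact that every odd-dimensional indecomposable $\KK G$-module $M$ has vertex $G$: the retraction $\tfrac1{\dim_\KK M}\tr\colon\End_\KK(M)=M\otimes M^{*}\to\KK$ of $G$-modules splits off a copy of $\KK$ from $M\otimes M^{*}$, so if $M$ had vertex $Q$ then $M\otimes M^{*}$, hence its summand $\KK$, would be relatively $Q$-projective, forcing $Q\supseteq\mathrm{vx}(\KK)=G$ (the trivial module is not relatively projective with respect to any proper subgroup of the $p$-group $G$, since $[G:Q]$ is then divisible by $p$). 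As $\KK$ is the unique simple $\KK G$-module and splits off $M\otimes M^{*}=\KK\oplus N_M$ for every odd-dimensional indecomposable $M$, pointedness of $\overline{\Rep_\KK(G)}$ is exactly the assertion that each $N_M$ is negligible, i.e.\ a sum of even-dimensional (equivalently, proper-vertex) indecomposables — Benson's Conjecture~\ref{beco}, extended to not-necessarily-indecomposable odd-dimensional modules, which follows formally from Lemma~\ref{bens}.

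\textbf{Step 2: the mechanism behind the conjecture.} Suppose $\Gamma\ne1$; then for some odd prime $\ell$ there is a symmetric tensor embedding $\Vecc_{\Z/\ell}\hookrightarrow\overline{\Rep_\KK(G)}$, i.e.\ an invertible object $\overline L$ of order $\ell$, lifting to an indecomposable $L$ with $L^{\otimes\ell}=\KK\oplus(\text{negligible})$ and with $L^{\otimes k}$ non-negligibly nontrivial for $0<k<\ell$. For the genuine endotrivial version of this situation — where ``negligible'' is replaced by ``free $\KK G$-module'' — one gets a contradiction at once: free $\KK G$-modules have dimension divisible by $|G|$, so $(\dim_\KK L)^{\ell}\equiv1\pmod{|G|}$, and since $|G|$ is a power of $2$ the unit group $(\Z/|G|)^{\times}$ is a $2$-group, so the multiplicative order of $\dim_\KK L$ both divides a power of $2$ and divides $\ell$, hence is $1$; then $\dim_\KK L\equiv1\pmod{|G|}$, and $L$ indecomposable with $\KK$ a summand of $L^{\otimes\ell}$ forces $L=\KK$, i.e.\ $\overline L=\mathbbm{1}$. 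This is the moral reason Conjecture~\ref{gammatriv} should hold, and it pinpoints the real difficulty: ``negligible'' (= even-dimensional in characteristic $2$) is a much weaker condition than ``free/projective'', and the above congruence is not available as stated.

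\textbf{Step 3: closing the gap, and the main obstacle.} One natural attempt to bridge that gap is to induct on $|G|$ (the cases $|G|\le2$ being trivial), choosing a normal subgroup $H\trianglelefteq G$ of index $2$: restrict $L$ to $H$, use the inductive hypothesis that $\overline{\Rep_\KK(H)}$ is pointed with odd-torsion-free Picard group to force $\overline{L|_H}$ to be invertible of order dividing $\ell$ and hence (as $\gcd(\ell,2)=1$) trivial, then transfer this conclusion back up the extension $1\to H\to G\to\Z/2\to1$, where the only genuinely new invertible object available is a character of $G/H\cong\Z/2$, contributing at most a factor of $2$ to the order of $\overline L$. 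The essential obstruction is that the semisimplification functor does \emph{not} commute with $\mathrm{Res}^G_H$: a sum of even-dimensional (hence negligible) indecomposable $G$-modules can restrict to an $H$-module having odd-dimensional indecomposable summands, so, writing $L\otimes L^{*}=\KK\oplus N_L$, the object $\overline{N_L|_H}$ need not vanish and $\overline{L|_H}$ need not even be invertible — which blocks both the step ``order divides $\ell$'' and the transfer back up. Controlling this requires serious $2$-local representation theory: Green correspondence, dimension bounds via vertices and sources, and the Carlson–Thévenaz description of endotrivial modules for $2$-groups. Moreover the delicacy is intrinsic and characteristic-$2$-specific: the analogue of Conjecture~\ref{gammatriv} in characteristic $p$ is \emph{false} — for $p\ge5$ the non-pointed category $\Ver_p$ is the semisimplification of $\Rep_\KK(\Z/p)$ with $\Z/p$ a $p$-group — so any proof must exploit that $[G:H]=2$ equals the characteristic, which is precisely where ordinary Clifford theory degenerates. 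I therefore expect Step 3, namely propagating ``trivial modulo negligibles'' through an index-$2$ extension while keeping the negligible part under control, to be the crux.
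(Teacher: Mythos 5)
The statement you are addressing is a \emph{conjecture} in the paper: it is stated without proof, supported only by computer evidence of D.~Benson, and the surrounding text merely records its equivalence with a strengthened form of Benson's Conjecture~\ref{beco} and with Conjecture~\ref{gammatriv1}. Your proposal likewise does not prove it: Step~1 is a reduction, Step~2 is explicitly a heuristic, and Step~3 ends by identifying an obstruction you do not overcome. So the genuine gap is the entire content of the conjecture; what you have written is a framing of why it is plausible and why it is hard, which is consistent with its status in the paper as an open problem, but it should not be presented as a proof.

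Two points of detail in the framing itself. First, in Step~1 you assert that $\Gamma_V=1$ iff $\overline{\Rep_\KK(G)}$ is pointed; this conflates two statements the paper is careful to separate. Pointedness alone, i.e.\ $\overline{\Rep_\KK(G)}\cong{\rm Vec}_A$ for \emph{some} abelian $A$, is Benson's Conjecture~\ref{beco}; if $A$ has a $\Z/\ell$ torsion summand with $\ell$ odd, then $A^\vee$ has a direct factor $\mu_\ell\cong\Z/\ell$, a finite \'etale group of odd order, which the normalization of \eqref{exsec} (where $A_V$ is required to have no odd torsion) forces into $\Gamma_V$. Conjecture~\ref{gammatriv} is the strictly stronger assertion that $A$ has no odd torsion --- the paper calls it a ``strengthened version'' for exactly this reason --- so your Step~2 reduction to killing an invertible object of odd prime order is the right target, but your stated equivalence in Step~1 is off. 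Second, in Step~2 the deduction ``$\dim_\KK L\equiv 1\pmod{|G|}$ and $L$ indecomposable with $\KK$ a summand of $L^{\otimes\ell}$ forces $L=\KK$'' is incomplete even in the genuinely endotrivial setting: $\Omega^2\KK$ is an indecomposable endotrivial module of dimension $\equiv 1\pmod{|G|}$ that is nontrivial in general. What one actually needs there is that the torsion subgroup of the group $T(G)$ of endotrivial modules of a $2$-group has trivial odd part (Carlson--Th\'evenaz), so that a class of order dividing $\ell$ is trivial. Since you offer Step~2 only as motivation this does not change the verdict, but it matters if you try to upgrade the heuristic. The crux you isolate in Step~3 --- that semisimplification does not commute with restriction, so ``trivial modulo negligibles'' does not propagate through an index-$2$ extension --- is indeed where the difficulty lives, and it remains open.
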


For a general finite group $G$ and ${\rm char}(\KK)=2$,
Theorem \ref{ostconj} and Nagata's theorem imply that $\overline{\Rep_\KK(G)}={\rm Vec}_A^\Gamma$ where
$A$ is an abelian group without odd torsion and $\Gamma$ is a projective limit of finite groups of odd order, where 
the superscript denotes $\Gamma$-equivariantization, see \cite{EGNO}, Subsection 4.15.
On the other hand, it is known (\cite{EO}, Section 4) that in any characteristic $p$
$$
\overline{{\rm Rep}_{\KK}(G)}\cong \overline{{\rm Rep}_\KK(N(G_p))}\cong \overline{{\rm Rep}_\KK(G_p)}^{N(G_p)/G_p},
$$
where $G_p$ is the Sylow $p$-subgroup of $G$, $N(G_p)$ its normalizer, and the superscript $N(G_p)/G_p$ means taking the $N(G_p)/G_p$-equivariantization.
So Conjecture \ref{gammatriv} is equivalent to the following seemingly more general conjecture.

\begin{conj}\label{gammatriv1} $\Gamma=N(G_2)/G_2$.
\end{conj}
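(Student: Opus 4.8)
The plan is to prove the equivalence of Conjecture~\ref{gammatriv} and Conjecture~\ref{gammatriv1} asserted above, the delicate direction being Conjecture~\ref{gammatriv}~$\Rightarrow$~Conjecture~\ref{gammatriv1}; throughout, Conjecture~\ref{ostconj} is kept as a standing hypothesis, since it is what makes $\Gamma$ well defined (via Nagata's theorem). The converse implication is immediate: if $G$ is a finite $2$-group then $G_2=G$, so $N(G_2)/G_2$ is trivial and the assertion ``$\Gamma$ is a quotient of $N(G_2)/G_2$'' forces $\Gamma=1$, i.e.\ $\overline{\Rep_\KK(G)}=\Vecc_A$, which is exactly Conjecture~\ref{gammatriv}. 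So fix an arbitrary finite group $G$, put $P:=G_2$ and $H:=N(P)/P$, a finite group of odd order. By the result of \cite{EO} quoted above there is a symmetric tensor equivalence $\overline{\Rep_\KK(G)}\cong \overline{\Rep_\KK(N(P))}\cong\overline{\Rep_\KK(P)}^{\,H}$, and by Schur--Zassenhaus $N(P)=P\rtimes H$, so the $H$-action on $\overline{\Rep_\KK(P)}$ is induced by genuine automorphisms of $P$.

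Next I would feed in Conjecture~\ref{gammatriv} for the $2$-group $P$: it gives $\overline{\Rep_\KK(P)}\cong\Vecc_{A}$ with $A=A_P$ a (possibly infinitely generated) abelian group without odd torsion, and with trivial braiding. Passing to group schemes, $\Vecc_A\cong\Rep(A^\vee)$ where $A^\vee=\mrm{Spec}\,\KK[A]$ is the Cartier dual; since $A$ has no prime-to-$2$ torsion, $\KK[A]$ has no nontrivial idempotents (its torsion part contributes only local factors $\KK[t]/(t-1)^{2^k}$ in characteristic $2$), so $A^\vee$ is \emph{connected}. Hence $\overline{\Rep_\KK(G)}\cong\Rep(A^\vee)^{H}$.

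The heart of the argument is then to analyze this equivariantization. Because $\overline{\Rep_\KK(P)}$ is Tannakian with an essentially unique fiber functor to $\Vecc_\KK$, and $A^\vee=\underline{\mrm{Aut}}_{\otimes}(\mrm{fib})$, the $H$-action transports to an action of $H$ on $A^\vee$ by group-scheme automorphisms (possibly carrying a cocycle arising from the non-canonicity of isomorphisms of fiber functors). The equivariantization is again Tannakian — indeed it is $\overline{\Rep_\KK(G)}$, which is Tannakian by the standing hypothesis — so it equals $\Rep(\widetilde G)$ for an affine group scheme $\widetilde G$ sitting in an extension
\[
1\longrightarrow A^\vee\longrightarrow \widetilde G\longrightarrow H\longrightarrow 1.
\]
Since $A^\vee$ is connected and $H$ is finite \'etale (its order being odd), $A^\vee$ is the identity component of $\widetilde G$ and $\pi_0(\widetilde G)=H$. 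Finally, in Nagata's presentation $1\to A_G^\vee\to\overline G_G\to\Gamma\to1$ of $\overline{\Rep_\KK(G)}\cong\Rep(\overline G_G)$ the subgroup $A_G^\vee$ is connected because $A_G$ has no odd torsion, so $\Gamma=\pi_0(\overline G_G)$; comparing the two presentations yields $\Gamma\cong H=N(G_2)/G_2$. In particular $\Gamma$ is a quotient of $N(G_2)/G_2$, which is Conjecture~\ref{gammatriv1} (and the argument in fact gives equality).

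The main obstacle I anticipate is the equivariantization step in the previous paragraph: one must carefully promote the categorical $H$-action on the pointed Tannakian category $\Rep(A^\vee)$ to honest automorphism data on the group scheme $A^\vee$, and identify $\Rep(A^\vee)^H$ with $\Rep(\widetilde G)$ for an extension of $H$ by $A^\vee$ — this amounts to checking that the relevant $2$- and $3$-cocycle obstructions are controlled (the coprimality of $|H|$ to the characteristic helps, and the degree-$3$ obstruction must vanish simply because the equivariantization is known a priori to be Tannakian) and that $\pi_0$ behaves well for the resulting, possibly pro-algebraic, group scheme. The remaining ingredients — the reduction to $N(G_2)$ from \cite{EO}, Schur--Zassenhaus, connectedness of Cartier duals of abelian groups without odd torsion in characteristic $2$, and triviality of the braiding on $\Vecc_A$ — are essentially routine.
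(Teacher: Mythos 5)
Your argument is correct and follows essentially the same route as the paper: since the statement is a conjecture, what is actually being established is its equivalence with Conjecture~\ref{gammatriv}, and like the paper you reduce via $\overline{\Rep_\KK(G)}\cong\overline{\Rep_\KK(G_2)}^{N(G_2)/G_2}$ from \cite{EO}, feed in Conjecture~\ref{gammatriv} for the $2$-group $G_2$, and identify $\Gamma$ with $N(G_2)/G_2$. The only difference is that you spell out the step the paper states without proof — that in the equivariantization $\Vecc_A^{H}$ the Nagata quotient $\Gamma$ is $\pi_0$ of the corresponding group scheme, which equals $H$ because $A^\vee$ is connected when $A$ has no odd torsion — and your treatment of that point is sound.
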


Indeed, since Conjecture \ref{gammatriv} says that
$\overline{{\rm Rep}_\KK(G_2)}={\rm Vec}_{A}$ for an abelian $A$ without odd torsion, we have
 $\overline{{\rm Rep}_\KK(G)}={\rm Vec}_{A}^{N(G_2)/G_2}$.
So $\Gamma=N(G_2)/G_2$.

A priori (from Theorem \ref{ostconj}), we only have
$\overline{{\rm Rep}_\KK(G_2)}={\rm Vec}_{A}^{\Gamma'}$
for some group $\Gamma'$ which is the projective limit
of odd order groups. Thus we have
a short exact sequence
$$
1\to \Gamma'\to \Gamma\to N(G_2)/G_2\to 1.
$$
So we have a surjective homomorphism $\Gamma\to N(G_2)/G_2$, and Conjecture \ref{gammatriv} says that it is an isomorphism.

Theorem \ref{ostconj} implies a weak version of Benson's conjecture. Namely,
we have the following proposition.

\begin{prop} Let $V$ be an indecomposable odd-dimensional representation of a finite group $G$ over $\KK$ of characteristic $2$, and let $\Sigma(V)$ be the collection of all odd-dimensional indecomposables that occur as direct summands in $V^{\otimes n}\otimes V^{*\otimes m}$ for various $n,m$. There exists a constant $K_V$ such that for every $W\in \Sigma(V)$ one has $\delta(W)\le K_V$, and for any $W_1,...,W_r\in \Sigma(V)$, the tensor product $W_1\otimes...\otimes W_r$ contains $\le K_V^r$ odd-dimensional direct summands.\footnote{Benson's conjecture says that one may take $K_V=|N(G_2)/G_2|$ for all $V$.}
\end{prop}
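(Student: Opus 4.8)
The plan is to transport the whole question to the semisimplification $\mathcal C:=\langle\overline V\rangle$, and there to read off both $b$ and the "number of odd-dimensional summands" through the fiber functor supplied by Conjecture \ref{ostconj}. By that conjecture there is a fiber functor $F\colon\mathcal C\to\Ver_2$, and since $\Ver_2=\Vecc_\KK$ this exhibits $\mathcal C$ as a Tannakian category $\Rep(\overline G_V)$ with $\overline G_V$ a linearly reductive affine group scheme of finite type; by Nagata's theorem (cf.\ \eqref{exsec}) there is an exact sequence $1\to A_V^\vee\to\overline G_V\to\Gamma_V\to 1$ with $\Gamma_V$ finite of odd order and $A_V^\vee$ diagonalizable. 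I would then set $K_V:=|\Gamma_V|$.

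The first step is to bound the $\KK$-dimensions of the simple objects of $\mathcal C$, equivalently of the irreducible $\overline G_V$-modules $M$. Restricting $M$ to the normal subgroup scheme $A_V^\vee$, which is linearly reductive and hence acts semisimply, $M|_{A_V^\vee}$ contains some character $\chi$; Frobenius reciprocity then produces a nonzero --- hence, $M$ being simple, injective --- morphism $M\hookrightarrow\mathrm{Ind}_{A_V^\vee}^{\overline G_V}\chi$, whose source has dimension $[\overline G_V:A_V^\vee]=|\Gamma_V|$. (This is the Clifford bound for the normal diagonalizable subgroup scheme $A_V^\vee$.) Thus every simple object $S$ of $\mathcal C$ satisfies $\dim_\KK F(S)\le K_V$. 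Now for $W\in\Sigma(V)$ the object $\overline W$ is simple in $\mathcal C$, so Theorem \ref{gro}(ii) applied to $F$ gives $b(W)=\beta(\overline W)={\rm FPdim}(F(\overline W))=\dim_\KK F(\overline W)\le K_V$; one could equally bypass Theorem \ref{gro} and use only the elementary inequality $\ell(X)\le\dim_\KK F(X)$, which already yields $\beta(\overline W)\le\dim_\KK F(\overline W)$.

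For the tensor-product estimate I would use that the semisimplification functor $S$ is symmetric monoidal and, by the corollary to Lemma \ref{bens} together with Krull--Schmidt, carries each odd-dimensional (i.e.\ non-negligible) indecomposable summand of $W_1\otimes\cdots\otimes W_r$ to a simple object of $\mathcal C$, carries the negligible indecomposable summands to $0$, and sends non-isomorphic odd-dimensional indecomposables to non-isomorphic simples. Consequently the number of odd-dimensional indecomposable summands of $W_1\otimes\cdots\otimes W_r$, counted with multiplicity, equals the length of $S(W_1\otimes\cdots\otimes W_r)\cong\overline{W_1}\otimes\cdots\otimes\overline{W_r}$ in $\mathcal C$; and since $F$ is a faithful exact monoidal functor into $\Vecc_\KK$, that length is at most $\dim_\KK F(\overline{W_1}\otimes\cdots\otimes\overline{W_r})=\prod_{j=1}^{r}\dim_\KK F(\overline{W_j})\le K_V^{\,r}$ by the previous step.

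The only genuinely non-formal ingredient is the uniform bound on dimensions of irreducible $\overline G_V$-modules, which rests entirely on the Nagata structure theorem for linearly reductive finite type group schemes in characteristic $2$; everything else is bookkeeping with the semisimplification functor and the trivial fact that length does not exceed dimension under a fiber functor. The point to be careful about is the multiplicity claim in the last step --- that passing from $\Rep_\KK(G)$ to $\mathcal C$ neither merges distinct odd-dimensional indecomposables nor changes their multiplicities --- which is exactly what Lemma \ref{bens} and its corollary provide when applied summand-by-summand to a Krull--Schmidt decomposition.
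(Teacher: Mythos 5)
Your proof is correct and follows exactly the route the paper indicates (but leaves as a remark rather than writing out): Conjecture \ref{ostconj} plus Nagata's theorem give the extension $1\to A_V^\vee\to\overline G_V\to\Gamma_V\to 1$, the Clifford/induction argument bounds the dimensions of irreducible $\overline G_V$-modules by $|\Gamma_V|$, and the count of odd-dimensional summands is transported to lengths in the semisimplification via Lemma \ref{bens}. No gaps; this is a faithful, fully detailed version of the paper's intended argument.
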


Actually, one can prove the same in any characteristic $p>0$.
This follows from the fact that any linearly reductive affine group scheme $H$ of finite type
in  ${\rm Ver}_p$ contains a torus $T$ which has finite index in $H$ (i.e., $\mathcal O(H/T)=\mathcal O(H)^T$ is finite dimensional), which follows from Nagata's theorem and the results of \cite{Ve}. 

\subsubsection{Characteristic 3} In characteristic $3$, it is conjectured in \cite{B2}
 that if $V$ is an indecomposable representation of a finite 3-group $G$ of dimension coprime to $3$ then $V\otimes V$ contains a direct summand $W$ such that $W\otimes W^*=\mathbbm{1}\oplus Q$, where $Q$ is negligible (i.e., $\overline{W}$ is invertible).
In other words, there exists an indecomposable $W$ such that $V^*$ is the unique
direct summand in $V\otimes W$ of dimension not divisible by $3$.

Let us see how this can be interpreted in the language of tensor categories.

As before, let $\langle \overline V\rangle$ be the category generated by $\overline{V}$. Theorem \ref{ostconj} implies that
$\langle \overline V\rangle=\Rep(\overline G_V,z)$, where $\overline G_V$ is a linearly reductive
affine supergroup scheme. A super-analog of Nagata's  theorem (see \cite{Ma}, Theorem 8.4) implies that $\overline{G}_V$ is actually an ordinary (even) linearly reductive group scheme (although $z$ may not equal to $1$), i.e. it can be included in an exact sequence \eqref{exsec} where $|\Gamma|$ is coprime to $3$ and $A$ has no $p$-torsion for $p\ne 3$. Thus Benson's conjecture can be interpreted as follows:

\begin{conj} Every irreducible representation of $\overline{G}_V$ is self-dual up to tensoring with a character.
\end{conj}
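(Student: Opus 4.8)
The plan is to work through the Tannakian dictionary. By Theorem~\ref{ostth} (in the finite-type case, or Conjecture~\ref{ostconj} in general) there is a symmetric tensor equivalence $\langle\overline V\rangle\simeq \Rep(\overline G_V,z)$ under which $\overline V$ goes to the defining representation $U$ of $\overline G_V$, simple objects correspond to irreducible representations, and invertible objects correspond to characters. So the conjecture says: for every simple $X\in\langle\overline V\rangle$ one has $X^*\cong X\otimes L$ with $L$ invertible. The first reduction is to the \emph{generating} object alone, i.e.\ to $\overline V^*\cong\overline V\otimes L_0$ for some invertible $L_0$. Unwinding the dictionary, $\overline V^*\cong\overline V\otimes L_0$ is exactly the statement that $V\otimes V$ has a direct summand $W$ with $\overline W$ invertible such that $V^*$ is the unique non-negligible summand of $V\otimes W$ --- that is, Benson's conjecture in characteristic $3$ (\cite{B2}).

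For the implication ``generating object $\Rightarrow$ all simples'' I would feed in the structure of $\overline G_V$ coming from the super-Nagata theorem (\cite{EOV}, Section~8): $\overline G_V$ is an ordinary linearly reductive affine group scheme sitting in an exact sequence \eqref{exsec} with $A^\vee$ diagonalizable, $A$ its character group, and $\Gamma$ finite of order coprime to $3$. By Clifford theory, an irreducible representation $W$ of $\overline G_V$ is determined by a single $\Gamma$-orbit $\mathcal O\subset A$ (the $A^\vee$-weights of $W$) together with a class in the Schur multiplier of $\mathrm{Stab}_\Gamma(\mathcal O)$. The hypothesis $U^*\cong U\otimes L_0$ forces the weight set of $U$ to be stable under $w\mapsto -w+v_0$ for the $\Gamma$-invariant weight $v_0$ of $L_0$; passing to tensor powers and subobjects, every weight-orbit occurring in $\langle\overline V\rangle$ should be stable under negation up to a $\Gamma$-invariant shift that lifts to a character of $\overline G_V$, after which a matching of the Clifford cocycles of $W$ and $W^*$ upgrades $W^*\cong W\otimes(\text{character})$ for all $W$. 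I should flag that this cocycle-matching step is itself nontrivial and is part of what makes the conjecture delicate.

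The main obstacle is the other half of the first reduction: producing the summand $W$, i.e.\ Benson's conjecture itself, which is open. I would attack it inside the stable module category: write $V\otimes V^*\cong\KK\oplus Q$, so that $\overline V^*\cong\overline V\otimes L_0$ amounts to finding an odd-dimensional indecomposable summand $L_0$ of $V\otimes V$ with $\overline{L_0}$ invertible and then controlling the non-negligible part of $V\otimes L_0$; the natural tools are the $G$-invariant pairings from $V\otimes V$ into the exterior powers $\wedge^k V$, almost-split sequences, and an induction on $|G|$ via the Sylow reduction $\overline{\Rep_\KK(G)}\simeq\overline{\Rep_\KK(N(G_3))}$ (\cite{EO}, Section~4). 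I expect the argument to stall precisely at identifying which indecomposable summand of $V\otimes W$ survives modulo negligibles and at checking that its dimension is coprime to $3$ --- this is the hard modular-representation-theoretic input, established so far only in special cases (cyclic Sylow $3$-subgroup, small $\dim_\KK V$, and extensive computer verification), and it is the reason the statement remains a conjecture.
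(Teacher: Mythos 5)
The statement you are asked about is a \emph{conjecture}: the paper offers no proof of it, and indeed none is known. What the paper does is derive the \emph{formulation}: assuming Conjecture~\ref{ostconj} (or Theorem~\ref{ostth} in finite type) one gets $\langle\overline V\rangle=\Rep(\overline G_V,z)$, the super-analog of Nagata's theorem forces $\overline G_V$ to be an ordinary linearly reductive group scheme of the form \eqref{exsec}, and then the displayed conjecture is simply the categorical repackaging of Benson's characteristic-$3$ conjecture. You correctly identify this, correctly conclude that the substance is Benson's conjecture itself (open, verified only in special cases), and your translation for the generating object --- that $\overline V^*\cong\overline V\otimes L_0$ with $L_0$ invertible is equivalent to $V\otimes V$ having a summand $W$ with $\overline W$ invertible --- is right. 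So your bottom line matches the paper's.

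Where you diverge, and where there is a genuine gap, is in your two-step architecture ``prove it for the generator, then bootstrap to all simples via Clifford theory.'' No bootstrapping is needed: every irreducible of $\overline G_V$ is $\overline W$ for some non-negligible indecomposable $G$-module $W\in\Sigma(V)$, and such a $W$ is itself an indecomposable representation of the $3$-group $G$ of dimension coprime to $3$, so Benson's conjecture applies to $W$ \emph{directly}; the self-duality of every irreducible is just the union of these instances, with the resulting invertible object automatically lying in $\langle\overline V\rangle$ (it is a summand of $\overline W^{*\otimes 2}$). Moreover, the bootstrap you sketch would not work as stated: from $U^*\cong U\otimes L_0$ one only gets that the duality $X\mapsto X^*\otimes L_0^{\otimes n}$ \emph{permutes} the simple constituents of $U^{\otimes n}$ (equivalently, the full weight multiset of $U^{\otimes n}$ is stable under $w\mapsto -w+nv_0$); it does not follow that each constituent is fixed by this involution, which is exactly what self-duality of each irreducible requires. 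So the propagation step is not merely ``delicate'' as you flag --- it is false as an implication, and this is another reason the statement must be read as equivalent to the full strength of Benson's conjecture over all $W\in\Sigma(V)$ rather than as a consequence of its instance for $V$ alone.
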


\subsection{Dimensions in ribbon categories in positive characteristic}

In this section we generalize Lemma \ref{infp} to the case of ribbon categories with a quasi-unipotent twist. For basics on ribbon categories we refer to \cite{EGNO}, Chapter 8. 

\begin{thm} Let $\mathcal C$ be a ribbon pseudotensor category over a field of characteristic $p$ in which traces of nilpotent endomorphisms are zero, with twist $v\in {\rm Aut}({\rm Id}_{\mathcal C})$ such that $v^\ell$ is unipotent for some $\ell$ coprime to $p$. 
Let $n$ be the smallest integer such that $p^n-1$ is divisible by $\ell^2$. Then 
for any object $V\in \mathcal C$ we have $\dim V\in \Bbb F_{p^n}$. 
\end{thm}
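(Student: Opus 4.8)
The plan is to imitate the proof of Lemma \ref{infp}, but now exploiting the ribbon (twist) structure rather than a cyclic permutation. First I would reduce to the case where $v$ itself is unipotent. Indeed, if $v^\ell$ is unipotent then $v=v_s v_u$ where $v_s$ is semisimple of order dividing $\ell$ and $v_u$ is unipotent; since $v_s$ is a natural automorphism of the identity functor of finite order coprime to $p$, it acts on each object $V$ as a diagonalizable endomorphism with eigenvalues that are $\ell$-th roots of unity, living in $\overline{\Bbb F_p}$. These roots of unity generate $\Bbb F_{p^m}$ where $m$ is the multiplicative order of $p$ modulo $\ell$; in particular $m\mid n$ since $p^n\equiv 1\pmod{\ell^2}$ forces $p^n\equiv 1\pmod \ell$.

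The key computational input is the ribbon/balancing identity, which expresses the twist $v_{V\otimes V}$ in terms of $v_V$, the double braiding, and (via the pivotal structure) the dimension. Concretely, I would use the standard fact that in a ribbon category $\dim(V)$ can be recovered as a trace involving the twist, and that for the cyclic permutation $\sigma$ of $V^{\otimes p}$ one has a relation of the form $\sigma^{p} = 1$ together with a relation expressing the ``monodromy'' contribution of $\sigma$ in terms of $v$. The idea is: the operator $A := v_{V}\cdot(\text{double braiding or cyclic shift})$ on an appropriate tensor power is both quasi-unipotent (because $v$ is, after the reduction above, unipotent, and the braiding part has finite order) and has the property that a suitable power equals $1$; hence $A-1$ (or $A^{\ell}-1$) is nilpotent. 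Taking traces and using that traces of nilpotents vanish yields a polynomial equation satisfied by $d=\dim V$ over $\Bbb F_p$, whose roots lie in $\Bbb F_{p^n}$. The exponent $p^n-1$ divisible by $\ell^2$ is exactly what is needed so that all the roots of unity arising from the $\ell$-torsion of $v$ \emph{and} from the braiding contribution (which naively brings in another factor of $\ell$, since the twist squared relates to the double braiding) already lie in $\Bbb F_{p^n}$, so the trace identity becomes an equation over $\Bbb F_{p^n}$ of the form $d^{p^n}=d$.

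I would carry this out in the following order: (1) decompose $v=v_s v_u$ and show $v_s$ has eigenvalues in $\mu_\ell\subset \Bbb F_{p^m}^\times$ with $m\mid n$; (2) handle the unipotent part $v_u$ as in Lemma \ref{infp}, noting $(1-v_u)$ is nilpotent so it contributes nothing to traces, effectively letting one replace $v$ by $v_s$ when computing dimensions; (3) pick the smallest $N$ with $\ell^2\mid p^N-1$ (this $N$ equals $n$) and use the ribbon axioms to produce, on $V^{\otimes p^?}$ or via the balancing on $V\otimes V$, an endomorphism $B$ with $B^{p^n}=B$-type relation after clearing the root-of-unity scalars, whence $d^{p^n}-d=0$; (4) conclude $d\in \Bbb F_{p^n}$. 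The main obstacle I expect is step (3): correctly bookkeeping the scalars coming from the twist on tensor powers and the double braiding so that the ``ambient'' field of roots of unity is exactly $\Bbb F_{p^n}$ and not something larger — this is where the precise hypothesis ``$\ell^2\mid p^n-1$'' (rather than just $\ell\mid p^n-1$) must be used, presumably because the balancing identity $v_{V\otimes W}=(v_V\otimes v_W)\circ c_{W,V}c_{V,W}$ introduces the \emph{square} of the relevant root of unity on the diagonal $W=V$, or because the relation between $v$ and the spherical/pivotal structure contributes a further factor of $\ell$. Getting this factor of two in the exponent right, and verifying that no other scalars intrude, is the crux; the rest is a routine nilpotent-trace argument as in Lemma \ref{infp}.
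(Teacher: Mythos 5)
Your proposal correctly identifies the general flavor of the argument (vanishing traces of nilpotents, applied to powers of cyclic braids whose unipotency comes from the quasi-unipotence of the twist), and the first half of the paper's proof is indeed of this type: with $c=b_1\cdots b_{p^m-1}$ the cyclic braid on $V^{\otimes p^m}$, the element $c^{\ell p^m}$ is expressed via $v^\ell$ acting on $V$ and on $V^{\otimes p^m}$, hence is unipotent, so $c^\ell-1$ is nilpotent and ${\rm Tr}_{V^{\otimes p^m}}(c^\ell)={\rm Tr}_{V^{\otimes p^m}}(1)=d^{p^m}$. But the step you flag as ``the crux'' in your item (3) is exactly the part you have not supplied, and it is not a matter of bookkeeping scalars: the missing idea is the torus-knot symmetry. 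The quantity ${\rm Tr}_{V^{\otimes p^m}}(c^\ell)$ is the Reshetikhin--Turaev invariant of the torus knot $T_{\ell,p^m}$, which equals $RT(T_{p^m,\ell})={\rm Tr}_{V^{\otimes \ell}}(C^{p^m})$ for the cyclic braid $C\in B_\ell$ on $V^{\otimes \ell}$. One then shows ${\rm Tr}_{V^{\otimes \ell}}(C^{N+\ell^2})={\rm Tr}_{V^{\otimes \ell}}(C^N)$ (because $C^{\ell^2}=(C^\ell)^\ell$ is expressed via $v^\ell$, hence unipotent, so $(C^{\ell^2}-1)C^N$ is nilpotent), and since $p^n\equiv 1 \pmod{\ell^2}$ one gets $d^{p^n}=RT(T_{p^n,\ell})=RT(T_{1,\ell})=RT(\mathrm{unknot})=d$. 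This is where $\ell^2$ genuinely enters: the full twist on $V^{\otimes \ell}$ is already $C^\ell$, and one needs its $\ell$-th power to bring in $v^\ell$ --- not, as you guessed, a square coming from the balancing identity on $V\otimes V$. Without the exchange $T_{\ell,p^m}\leftrightarrow T_{p^m,\ell}$ there is no way to relate $d^{p^n}$ back to $d$, so your outline does not close.

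A secondary problem is your proposed reduction to unipotent $v$ via the multiplicative Jordan decomposition $v=v_sv_u$. The semisimple part $v_s$ is a natural automorphism of the identity functor, but it need not satisfy the ribbon axiom $v_{V\otimes W}=(v_V\otimes v_W)\circ c_{W,V}c_{V,W}$, since the double braiding is in general neither semisimple nor unipotent and so does not split compatibly; and even granting the decomposition, the eigenvalues of $v_{s,V}$ on $V$ have no direct bearing on $\dim V$, which is a categorical trace rather than a sum of eigenvalues of the twist. The paper avoids this entirely: it never decomposes $v$, only uses that suitable powers of braid elements are unipotent because they are expressed through $v^\ell$.
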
 

\begin{proof} Let $d=\dim V$ and $c: V^{\otimes p^m}\to V^{\otimes p^m}$ be the 
cyclic braid $c=b_1....b_{p^m-1}$ in the braid group $B_{p^m}$ (cf. Subsection \ref{brai}). Then $c^{\ell p^m}$ is unipotent, as it is expressed via the action of $v^\ell$ on $V$ and $V^{\otimes p^m}$. Thus $c^{\ell p^m}-1=(c^\ell-1)^{p^m}$ is nilpotent. Hence $c^\ell-1$ is nilpotent. 
Thus the trace of $c^\ell-1$ on $V^{\otimes p^m}$ vanishes. But we have 
${\rm Tr}_{V^{\otimes p^m}}(1)=d^{p^m}$ while ${\rm Tr}_{V^{\otimes p^m}}(c^\ell)$ is the Reshetikhin-Turaev invariant 
$RT(T_{\ell,p^m})$ of the torus knot $T_{\ell,p^m}$. This invariant can also be written as 
$RT(T_{p^m,\ell})={\rm Tr}_{V^{\otimes \ell}}(C^{p^m})$, where $C: V^{\otimes \ell}\to V^{\otimes \ell}$ 
is the cyclic braid $C=b_1...b_{\ell-1}\in B_\ell$. 

Now, we claim that ${\rm Tr}_{V^{\otimes \ell}}(C^N)={\rm Tr}_{V^{\otimes \ell}}(C^{N+\ell^2})$. Indeed, 
$C^{\ell^2}$ expresses via the action of $v^\ell$ on $V$ and $V^{\otimes \ell}$, so 
it is unipotent and $C^{\ell^2}-1$ is nilpotent. Thus $(C^{\ell^2}-1)C^N$ is nilpotent, hence its trace vanishes. This implies  the statement. 

Finally, we have $p^n-1=r\ell^2$. Thus $RT(T_{p^n,\ell})=RT(T_{1,\ell})=RT({\rm unknot})=d$. Thus $d^{p^n}=d$ as claimed. 
\end{proof}


\begin{thebibliography}{99999}

\bibitem[B1]{B} D. Benson, Commutative Banach algebras and modular representation theory, 2020, arXiv: \url{https://arxiv.org/pdf/2008.13155.pdf}.

\bibitem[B2]{B2} D. Benson, Some conjectures and their consequences for tensor products of modules over a finite p-group, \textit{J. Algebra},
Volume 558, 15 September 2020, p. 24--42.

\bibitem[BE]{BE} D. Benson and P. Etingof, Symmetric tensor categories in characteristic 2, \textit{Advances in Mathematics}, v.351, p. 967--999, 2019.

  \bibitem[BEO]{BEO}
  D. Benson, P. Etingof, and V. Ostrik,
  New incompressible symmetric tensor categories in positive characteristic.
  2020. arXiv: \url{https://arxiv.org/pdf/2003.10499.pdf}.

\bibitem[BS]{BS} D. Benson and P. Symonds, The non-projective part of the tensor powers of a module, \textit{Journal of the London Mathematical Society}, v. 101, p. 823--856, 2020.

\bibitem[C]{C} J. Comes, Ideals in Deligne's category $\Rep(GL_\delta)$, 2012, arXiv:
\url{https://arxiv.org/pdf/1201.5669.pdf}.

 \bibitem[CW]{CW}
  J. Comes and B. Wilson,
 Deligne's category $\underline{\Rep} \ GL_\delta$ and representations of general linear supergroups,
  In: \textit{Representation Theory of the American Mathematical Society} 16.16 (2012), pp. 568--609.

\bibitem[Co1]{Co} K. Coulembier, Monoidal abelian envelopes, 2020, arXiv: \url{https://arxiv.org/pdf/2003.10105.pdf}.

\bibitem[Co2]{Co2} K. Coulembier, Tannakian categories in positive characteristic, Duke Math. J. 169, no. 16 (2020), 3167--3219.

\bibitem[CEO]{CEO} K. Coulembier, P. Etingof, V. Ostrik, Frobenius exact symmetric tensor categories, arXiv:2107.02372. 

\bibitem[CDDM]{CDDM} A. Cox, M. De Visscher, S. Doty, and P. Martin, On the blocks of the walled Brauer algebra, J. Algebra, 320(1):169--212, 2008.

\bibitem[DM]{DM} P. Deligne and J. S. Milne, Tannakian Categories, in: Hodge Cycles, Motives, and Shimura Varieties. \textit{Springer}, LNM 900, pp. 101--228, 1982, updated version:
\url{https://www.jmilne.org/math/xnotes/tc2018.pdf}.

  \bibitem[D1]{D1}
  P. Deligne,
Cat\'egories tensorielles, \textit{Mosc. Math J.} 2.2 (2002), pp. 227-248.

  \bibitem[D2]{D2}  P. Deligne, La cat\'egorie des repr\'esentations du groupe sym\'etrique $S_t$, lorsque t n’est pas un entier naturel,  Algebraic groups and homogeneous spaces, 209--273, Tata Inst.  Fund.Res. Stud. Math., Tata Inst. Fund. Res., Mumbai, 2007.

\bibitem[D3]{D3} P. Deligne, A letter to Ostrik, March 2015.

\bibitem[D4]{D4} P. Deligne, Cat\'egories tannakiennes, in: Grothendieck Festschrift vol. II, \textit{Progress in Mathematics}, 87, Birkh\"auser, Boston, 1990, pp. 111--195.

\bibitem[E]{E} P. Etingof, Koszul duality and the PBW theorem in symmetric tensor categories in positive characteristic, 2016, \textit{Advances in Mathematics},
Volume 327, Pages 128--160.

  \bibitem[EGNO]{EGNO}
  P. Etingof, S. Gelaki, D. Nikshych, and V. Ostrik,
  Tensor Categories,
  Vol. 205. \textit{American Mathematical Soc.}, 2016.

\bibitem[EHO]{EHO} P. Etingof, N. Harman, V. Ostrik,
$p$-adic dimensions in symmetric tensor categories in characteristic $p$, 2015, Quantum topology, 
Volume 9, Issue 1, 2018, pp. 119--140,
arXiv: \url{https://arxiv.org/pdf/1510.04339.pdf}.

  \bibitem[EHS]{EHS}
  I. Entova-Aizenbud, V. Hinich and V. Serganova, Deligne Categories and the limit of categories $\Rep \ GL(m|n)$, \textit{International Mathematics Research Notices} 2020.15 (2020), pp. 4602--4666.


\bibitem[EKR]{EKR} P. Etingof, D. Kalinov, E. Rains, New realizations of deformed double current algebras and Deligne categories. 2020. arXiv: \url{https://arxiv.org/pdf/2005.13604.pdf}.

  \bibitem[EO1]{EO}
  P. Etingof and V. Ostrik, On semisimplification of tensor categories, 2019, arXiv: \url{https://arxiv.org/pdf/1801.04409.pdf}.

\bibitem[EO2]{EO2} 
P. Etingof and V. Ostrik, On the Frobenius functor for symmetric tensor categories in positive characteristic, 
Journal f\"ur die reine und angewandte Mathematik, 2020, \url{https://arxiv.org/pdf/1912.12947.pdf}. 

\bibitem[EOV]{EOV} P. Etingof, V. Ostrik, S. Venkatesh,
Computations in symmetric fusion categories in characteristic $p$, 2017, \textit{Int. Math. Res. Not.}, p. 468--489.

\bibitem[GM]{GM} G. Georgiev and O. Mathieu, Fusion rings for modular representations of Chevalley groups, \textit{Contemp. Math}.175(1994), 89--100.

\bibitem[GK]{GK} S. Gelfand and D. Kazhdan. Examples of tensor categories, \textit{Invent. Math.} 109(1992),no. 3, 595--617.

 \bibitem[H1]{H} N. Harman,  Stability and periodicity in the modular representation theory of symmetric groups. 2015. arXiv: \url{https//arxiv.org/pdf/1509.06414.pdf}.

\bibitem[H2]{H2} N. Harman, Deligne categories as limits in rank and characteristic. 2016. arXiv: \url{https://arxiv.org/pdf/1601.03426.pdf}.

\bibitem[K]{K} D. Kalinov, Finite dimensional representations of Yangians in complex rank. 2020. \textit{Int. Math. Res. Not.}, pg. 6967--6998.

\bibitem[Ma]{Ma} A. Masuoka, 
Harish-Chandra pairs for algebraic affine supergroup schemes over an arbitrary field. 
Transform. Groups 17 (2012), no. 4, 1085-1121, arXiv:1111.2387.

\bibitem[N]{N} M. Nagata, Complete reducibility of rational representations of a matrix group, 
Kyoto Journal of Mathematics v.1 (1961), p.87--99. 

\bibitem[O]{O} V. Ostrik, On symmetric fusion categories in positive characteristic, Selecta Mathematica, v.26, 2020, arXiv: \url{https://arxiv.org/pdf/1503.01492.pdf}

\bibitem[Ve]{Ve} S. Venkatesh, Harish-Chandra pairs in the Verlinde category in positive characteristic, 
\url{https://arxiv.org/pdf/1909.11240.pdf}

\end{thebibliography}
\end{document}